\newenvironment{subproof}{\begin{proof}[Proof of 16.]}{\end{proof}}
\DeclareMathOperator{\cl}{cl}
\DeclareMathOperator{\fcl}{fcl}
\DeclareMathOperator{\si}{si}
\DeclareMathOperator{\co}{co}
\DeclareMathOperator{\gf}{GF}
\newcommand{\del}{\hspace{-0.5pt}\backslash}
\newcommand{\delete}{\del}
\newcommand{\dy}{\ensuremath{\Delta}-\ensuremath{\nabla}}
\title{Relaxations of $\gf(4)$-representable matroids}
\author{Ben Clark\\
\small\tt clarkbenj@myvuw.ac.nz\\
\and
James Oxley \qquad  Stefan H.M. van Zwam \thanks{Supported by the National Science Foundation, grant 1500343}\\
\small Department of Mathematics\\[-0.8ex]
\small Louisiana State University\\[-0.8ex]
\small Baton Rouge, LA, United States\\
\small\tt oxley@math.lsu.edu, stefanvanzwam@gmail.com}
\begin{document}

\maketitle

%
%


\begin{abstract}
We consider the $\gf(4)$-representable matroids with a circuit-hyperplane such that the matroid obtained by relaxing the circuit-hyperplane is also $\gf(4)$-represen-table. We characterize the structure of these matroids as an application of structure theorems for the classes of $U_{2,4}$-fragile and $\{U_{2,5},U_{3,5}\}$-fragile matroids. In addition, we characterize the forbidden submatrices in $\gf(4)$-representations of these matroids.
\end{abstract}

\maketitle

\section{Introduction}

Lucas \cite{lucas1975weak} determined the binary matroids that have a circuit-hyperplane whose relaxation yields another binary matroid. Truemper \cite{truemper1982alpha}, and independently, Oxley and Whittle \cite{oxley1998weak}, did the same for ternary matroids. In this paper, we solve the corresponding problem for quaternary matroids. We give both a structural characterization and a characterization in terms of forbidden submatrices. 

Truemper \cite{truemper1982alpha} used the structure of circuit-hyperplane relaxations of binary and ternary matroids to give new proofs of the excluded-minor characterizations for the classes of binary, ternary, and regular matroids. It is natural to ask if Truemper's techniques can be extended to give excluded-minor characterizations for classes of quaternary matroids. The main results of this paper can be viewed as a first step towards answering this question. 

Our structural characterization can be summarized as follows. A matroid has \textit{path width $3$} if there is an ordering $(e_1, e_2, \ldots, e_n)$ of its ground set such that $\{e_1, e_2, \ldots, e_t\}$ is a $3$-separating set for all $t\in \{1,2,\ldots, n\}$.

\begin{theorem}
\label{main-fragile2}
Let $M$ and $M'$ be $\gf(4)$-representable matroids such that $M'$ is obtained from $M$ by relaxing a circuit-hyperplane. Then $M'$ has path width $3$.
\end{theorem}

In fact, our main result, Theorem \ref{main}, describes precisely how the matroids in Theorem \ref{main-fragile2} of path width $3$ can be constructed using the \textit{generalized $\Delta$-$Y$ exchange} of \cite{oxley2000generalized} and the notion of \textit{gluing a wheel onto a triangle} from \cite{chun2013fan}. Our description uses the structure of $U_{2,4}$-fragile matroids from \cite{mayhew2010stability} and the structure of $\{U_{2,5},U_{3,5}\}$-fragile matroids from \cite{clark2016fragile}.

In future work, we hope to obtain a description of these matroids that is independent of the notion of fragility. Specifically, we would like to characterize the representations of these matroids. As a step in this direction, we describe minimal $\gf(4)$-representations of matroids with a circuit-hyperplane whose relaxation is not $\gf(4)$-representable. Note that the proof uses the excluded-minor characterization of the class of  $\gf(4)$-representable matroids. The setup for this result is as follows.

Let $M$ be a $\gf(4)$-representable matroid on $E$ with a circuit-hyperplane $X$. Choose $e\in X$ and $f\in E-X$ such that $(X-e)\cup f$ is a basis of $M$. Then $M = M[I | C]$ for a quaternary matrix $C$ of the following block form.

\[C=\kbordermatrix{
&(E-X)-f  & e \\
X-e&A & \underline{1}\\
f &\underline{1}^{T} & 0\\
}.
\]

In the above matrix, $A$ is an $(X-e)\times ((E-X)-f)$ matrix, and we have scaled so that every non-zero entry in the row labelled by $f$ and the column labelled by $e$ is $1$. Let $M'$ be the matroid obtained from $M$ by relaxing the circuit-hyperplane $X$. We call the matrix $C$ a \emph{reduced representation} of $M$. If $M'$ is $\gf(4)$-representable, then we can find a reduced representation $C'$ of $M'$ in the following block form.

\[C'=\kbordermatrix{
&(E-X)-f  & e \\
X-e&A' & \underline{1}\\
f &\underline{1}^{T} & \omega\\
}.
\] 

We have scaled the rows and columns of the matrix such that the entry $C'_{fe} = \omega \in \gf(4)-\{0,1\}$, and the remaining entries in row $f$ and column $e$ are all $1$. The following theorem is our characterization in terms of forbidden submatrices.

\begin{theorem}
\label{badsubmatrix}
Let $M$ and $C$ be constructed as described above. There is a reduced representation $C'$ of the above form for $M'$ if and only if, up to permuting rows and columns, $A$ and $A^{T}$ have no submatrix in the following list, where $x,y,z$ denote distinct non-zero elements of $\gf(4)$.
\[\begin{bmatrix}
x&y&z
\end{bmatrix},
\begin{bmatrix}
x&y\\
0&x
\end{bmatrix}, 
\begin{bmatrix}
x&y\\
y&x
\end{bmatrix},
\begin{bmatrix}
x&x\\
y&z
\end{bmatrix},
\begin{bmatrix}
x&y\\
z&x
\end{bmatrix},
\begin{bmatrix}
x&x&0\\
x&0&x
\end{bmatrix},
\begin{bmatrix}
x&x&0\\
x&0&y
\end{bmatrix},
\]

\[\begin{bmatrix}
x&x&0\\
y&0&y
\end{bmatrix},
\begin{bmatrix}
x&y&0\\
x&0&y
\end{bmatrix},
\begin{bmatrix}
x&0&0\\
0&y&z
\end{bmatrix},
\begin{bmatrix}
x&y&0\\
x&0&z	
\end{bmatrix},
\begin{bmatrix}
x&0&0\\
0&x&0\\
0&0&x
\end{bmatrix},
\begin{bmatrix}
x&0&0\\
0&x&0\\
0&0&y
\end{bmatrix},\]

\[
\begin{bmatrix}
x&0&0\\
0&y&0\\
0&0&z
\end{bmatrix},
\begin{bmatrix}
x&y&x\\
y&y&0\\
x&0&0
\end{bmatrix},
\begin{bmatrix}
x&y&x\\
y&y&0\\
x&0&z
\end{bmatrix}.\]
\end{theorem}

This paper is organized as follows. In the next section, we collect some results on connectivity and circuit-hyperplane relaxation. In Section 3, we prove a fragility theorem. In Section 4, we describe the structure of the $\{U_{2,5},U_{3,5}\}$-fragile matroids. In Section 5, we prove the structural characterization. In Section 6, we reduce the proof of Theorem \ref{badsubmatrix} to a finite computer check. This check, carried out using SageMath, can be found in the Appendix \cite{COZarx}.

\section{Circuit-hyperplane relaxations and connectivity}
\label{prelims}

We assume the reader is familiar with the fundamentals of matroid theory. Any undefined matroid terminology will follow Oxley \cite{oxley2011matroid}. 
Let $M$ be a matroid on $E$, and let $\mathcal{B}(M)$ denote the collection of bases of $M$. If $M$ has a circuit-hyperplane $X$, then $\mathcal{B}(M')=\mathcal{B}(M)\cup \{X\}$ is the collection of bases of a matroid $M'$ on $E$. We say that $M'$ is obtained from $M$ by \textit{relaxing the circuit-hyperplane} $X$. We list here a number of useful results on circuit-hyperplane relaxation. 

\begin{lemma}
\cite[Proposition 2.1.7]{oxley2011matroid}
If $M'$ is obtained from $M$ by relaxing the circuit-hyperplane $X$ of $M$, then $(M')^{*}$ is obtained from $M^{*}$ by relaxing the circuit-hyperplane $E(M)-X$ of $M^{*}$.
\end{lemma}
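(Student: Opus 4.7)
The plan is a short, direct computation using the definition of matroid duality and the observation that circuit-hyperplanes dualize correctly. The main ingredient is the standard identity $\mathcal{B}(N^{*})=\{E-B : B\in\mathcal{B}(N)\}$ together with the fact that hyperplanes of $N$ are complements of cocircuits (circuits of $N^{*}$) and circuits of $N$ are complements of cohyperplanes (hyperplanes of $N^{*}$).

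First I would check that $E-X$ is a circuit-hyperplane of $M^{*}$. Since $X$ is a hyperplane of $M$, the set $E-X$ is a cocircuit of $M$, i.e.\ a circuit of $M^{*}$. Since $X$ is a circuit of $M$, the set $E-X$ is a cohyperplane of $M$, i.e.\ a hyperplane of $M^{*}$. So the relaxation of $E-X$ in $M^{*}$ is well defined, and by the definition of relaxation its set of bases is $\mathcal{B}(M^{*})\cup\{E-X\}$.

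Next I would compute $\mathcal{B}((M')^{*})$ directly. By the duality identity,
\[
\mathcal{B}((M')^{*})=\{E-B : B\in\mathcal{B}(M')\}=\{E-B : B\in\mathcal{B}(M)\}\cup\{E-X\}=\mathcal{B}(M^{*})\cup\{E-X\}.
\]
Comparing the two displays shows that $(M')^{*}$ and the relaxation of $E-X$ in $M^{*}$ have the same collection of bases, so they are equal.

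There is really no obstacle here: the only subtlety is making sure the relaxation of $E-X$ in $M^{*}$ is actually defined (i.e., that $E-X$ is genuinely a circuit-hyperplane of $M^{*}$), which is immediate from how circuits and hyperplanes behave under duality. Everything else is a one-line manipulation of basis sets.
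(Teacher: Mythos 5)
Your proof is correct and is essentially the standard argument given in Oxley's book (the paper simply cites \cite[Proposition 2.1.7]{oxley2011matroid} without reproducing a proof). The verification that $E-X$ is a circuit-hyperplane of $M^{*}$ via the duality of circuits/cocircuits and hyperplanes/cohyperplanes, followed by the one-line comparison of basis families, is exactly the intended route.
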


The following elementary results are originally from \cite{kahn1985problem}.

\begin{lemma}
\label{Hminors}
 \cite[Proposition 3.3.5]{oxley2011matroid}
Let $X$ be a circuit-hyperplane of a matroid $M$, and let $M'$ be the matroid obtained from $M$ by relaxing $X$. When $e\in E(M)-X$,
\begin{enumerate}
\item[(i)] $M/e=M'/e$ and, unless $M$ has $e$ as a coloop, $M'\del e$ is obtained from $M\del e$ by relaxing the circuit-hyperplane $X$ of the latter.
\end{enumerate}
Dually, when $f\in X$,
\begin{enumerate}
\item[(ii)] $M\del f=M'\del f$ and, unless $M$ has $f$ as a loop, $M'/f$ is obtained from $M/f$ by relaxing the circuit-hyperplane $X-f$ of the latter.
\end{enumerate}
\end{lemma}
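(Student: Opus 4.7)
The plan is to work directly from the basis-level description of the relaxation, $\mathcal{B}(M') = \mathcal{B}(M)\cup\{X\}$, and on each side of each claimed equality simply to read off the bases of the minor from the definitions.

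For part (i), fix $e\in E(M)-X$. Since $X$ is a hyperplane of $M$, it contains every loop, so $e$ is not a loop and $M/e$ is the standard contraction. To show $M/e=M'/e$, I would use the fact that for any matroid $N$, $\mathcal{B}(N/e) = \{B-e : B\in\mathcal{B}(N),\ e\in B\}$. Since $e\notin X$, the one extra basis $X$ of $M'$ makes no contribution on contraction of $e$, so the basis families of $M/e$ and $M'/e$ coincide.

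For the deletion part, assume $e$ is not a coloop of $M$. Then $\mathcal{B}(M\del e)$ consists of those $B\in\mathcal{B}(M)$ with $e\notin B$, and $\mathcal{B}(M'\del e)$ consists of these same sets together with $X$ (which lies in $\mathcal{B}(M')$ and avoids $e$). So the content of the statement reduces to showing $X$ is a circuit-hyperplane of $M\del e$. I would verify three routine points: $X$ is a circuit of $M\del e$ because $e\notin X$ and circuit-deletion is unaffected; $r_{M\del e}(X)=r_M(X)=r(M)-1$ and $r(M\del e)=r(M)$ because $e$ is not a coloop, giving $X$ the required hyperplane rank; and for every $x\in (E-X)-e$, $r_M(X\cup x)=r(M)=r(M\del e)$, confirming that $X$ is a flat of $M\del e$. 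Combining these gives $\mathcal{B}(M'\del e)=\mathcal{B}(M\del e)\cup\{X\}$, which is the required relaxation.

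For part (ii), I would simply invoke the duality lemma stated just before: $(M')^{*}$ is the relaxation of $M^{*}$ at the circuit-hyperplane $E(M)-X$. For $f\in X$ we have $f\in E(M)\setminus(E(M)-X)$, so part (i) applied to the pair $(M^{*},(M')^{*})$ and the element $f$, together with the identities $(N\del f)^{*}=N^{*}/f$ and $(N/f)^{*}=N^{*}\del f$, immediately yields both $M\del f=M'\del f$ and the statement that $M'/f$ is the relaxation of $M/f$ at $X-f$ whenever $f$ is not a loop of $M$ (equivalently, $f$ is not a coloop of $M^{*}$).

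The main obstacle is not conceptual but bookkeeping: one has to handle the loop/coloop exceptions carefully, which is precisely why the lemma excludes those cases. Once the exceptions are set aside, everything reduces to the trivial observation that the sole new basis $X$ of $M'$ is disjoint from $e\in E(M)-X$ and contains every $f\in X$, so it survives on deletion in the first case and disappears on contraction in the second — exactly matching the statement.
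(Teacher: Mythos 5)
Your argument is correct: the basis-level bookkeeping for part (i) (the extra basis $X$ of $M'$ avoids $e$, so it survives deletion and vanishes under contraction, together with the routine check that $X$ remains a circuit-hyperplane of $M\del e$ when $e$ is not a coloop) and the dualization via the preceding lemma for part (ii) are exactly what is needed. Note that the paper itself offers no proof — it cites this as Proposition 3.3.5 of Oxley's book — and your proof is essentially the standard argument given there, so there is nothing to reconcile.
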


For a set $\mathcal{N}$ of matroids, we say that a matroid $M$ has an \textit{$\mathcal{N}$-minor} if $M$ has an $N$-minor for some $N\in \mathcal{N}$. We say $M$ is \textit{$\mathcal{N}$-fragile} if $M$ has an $\mathcal{N}$-minor and, for each element $e$ of $M$, at most one matroid in $\{M\del e, M/e\}$ has an $\mathcal{N}$-minor. We say an element $e$ of an $\mathcal{N}$-fragile matroid $M$ is \textit{nondeletable} if $M\del e$ has no $\mathcal{N}$-minor; the element $e$ is \textit{noncontractible} if $M/ e$ has no $\mathcal{N}$-minor.

The following lemma is an immediate consequence of Lemma \ref{Hminors}.

\begin{lemma}
\label{Hfragile}
Let $X$ be a circuit-hyperplane of a matroid $M$, and let $M'$ be the matroid obtained from $M$ by relaxing $X$. If $\mathcal{N}$ is a set of matroids such that $M'$ has an $\mathcal{N}$-minor but $M$ has no $\mathcal{N}$-minor, then $M'$ is $\mathcal{N}$-fragile. Moreover, $X$ is a basis of $M'$ whose elements are nondeletable such that the elements of the cobasis $E(M')-X$ are noncontractible.
\end{lemma}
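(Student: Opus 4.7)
The plan is to derive both assertions directly from the two identities $M'/e = M/e$ (for $e \in E(M)-X$) and $M'\del f = M\del f$ (for $f \in X$) supplied by Lemma \ref{Hminors}. These particular identities hold unconditionally in that lemma; the loop/coloop caveats appearing there concern only the partner statements about $M'\del e$ and $M'/f$, which are not needed. So no case analysis on degenerate elements is required.

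Given these identities, the \emph{moreover} clause follows at once. Since $M$ has no $\mathcal{N}$-minor, $M/e$ has no $\mathcal{N}$-minor for every $e \in E(M)-X$, and so $M'/e$ has no $\mathcal{N}$-minor, meaning each such $e$ is noncontractible in $M'$. Dually, each $f \in X$ is nondeletable in $M'$. The set $X$ is a basis of $M'$ by the definition of circuit-hyperplane relaxation (the circuit-hyperplane $X$ of $M$ has size $r(M)$ and is added to the list of bases), so $E(M')-X$ is the corresponding cobasis.

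Finally, $\mathcal{N}$-fragility is essentially immediate: $M'$ has an $\mathcal{N}$-minor by hypothesis, and for every element of $M'$ one of its deletion and contraction has already been shown to have no $\mathcal{N}$-minor (the deletion for elements of $X$, the contraction for elements of $E(M)-X$). Hence at most one of $M'\del e, M'/e$ carries an $\mathcal{N}$-minor for each $e$, which is the definition of $\mathcal{N}$-fragility. There is no real obstacle here; the lemma is a bookkeeping consequence of Lemma \ref{Hminors}, the only point worth noting being the observation above that the two identities I use avoid the loop/coloop caveats in that lemma.
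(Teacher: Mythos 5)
Your proof is correct and follows exactly the route the paper intends; the paper itself gives no explicit argument, stating only that the lemma is \emph{an immediate consequence of Lemma \ref{Hminors}}, and your write-up fills in that gap faithfully. In particular, your observation that the two identities $M'/e = M/e$ (for $e \in E(M)-X$) and $M'\del f = M\del f$ (for $f \in X$) are unconditional, so no loop/coloop case analysis is needed, is precisely the key point.
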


We use the following connectivity result.

\begin{lemma}
 \cite[Proposition 8.4.2]{oxley2011matroid}
Let $M'$ be a matroid that is obtained by relaxing a circuit-hyperplane of a matroid $M$. If $M$ is $n$-connected, then $M'$ is $n$-connected.
\end{lemma}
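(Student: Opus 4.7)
The plan is to show that every $k$-separation of $M'$ is already a $k$-separation of $M$, or else is numerically impossible, so that $n$-connectedness transfers from $M$ to $M'$. The whole argument will rest on a careful comparison of the rank functions of $M$ and $M'$, exploiting the fact that relaxation alters the matroid at exactly one set.

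The first technical step is to compute $r_{M'}$ in terms of $r_M$. Since $\mathcal{B}(M') = \mathcal{B}(M) \cup \{X\}$ and every proper subset of the circuit $X$ is independent in $M$, the independent sets of $M'$ are precisely $\mathcal{I}(M) \cup \{X\}$. From this I would deduce that $r_{M'}(A) = r_M(A)$ for every $A \neq X$: if $X \not\subseteq A$ then no new independent set appears inside $A$, and if $A \supsetneq X$ then, because $X$ is a hyperplane of $M$, the set $A$ already spans $M$, so $r_M(A) = r(M) = r(M')$. At the single exceptional set, $r_{M'}(X) = |X| = r(M) = r_M(X) + 1$, while $r(M') = r(M)$.

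With this in hand, I will assume for contradiction that $(A,B)$ is a $k$-separation of $M'$ with $k < n$. If neither $A$ nor $B$ equals $X$, then the rank comparison forces $\lambda_{M'}(A) = \lambda_M(A)$, so $(A,B)$ is a $k$-separation of $M$ as well, contradicting the $n$-connectedness of $M$. I expect the only delicate case to be when, up to symmetry, $A = X$ and $B = E - X$, since this is precisely where $r_{M'}$ differs from $r_M$. Here I compute $r_{M'}(X) = r(M)$ and $r_{M'}(E - X) = r_M(E - X) = r(M)$ (the latter because $X$ is a hyperplane of $M$, so its complement is spanning), giving $\lambda_{M'}(X) = r(M)$. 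The defining inequality $\lambda_{M'}(X) \leq k - 1$ then forces $r(M) \leq k - 1$, but the cardinality condition requires $|X| = r(M) \geq k$; these are incompatible, so this case does not occur either. Thus $M'$ has no $k$-separation for any $k < n$, which is the claim.
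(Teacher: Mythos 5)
The paper does not prove this lemma itself; it cites \cite[Proposition~8.4.2]{oxley2011matroid}. Your overall strategy — compute $r_{M'}$ in terms of $r_M$, observe that the connectivity functions agree except at the one exceptional set, and dispose of that set separately — is the standard one, and your rank computation ($r_{M'}(A)=r_M(A)$ for all $A\neq X$, while $r_{M'}(X)=r_M(X)+1$) is correct. The case where neither side of the partition is $X$ is handled correctly.

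However, your treatment of the single exceptional partition $(X,E-X)$ contains a genuine error: the claim that ``$X$ is a hyperplane of $M$, so its complement is spanning'' is false. The complement of a hyperplane need not span. For a concrete counterexample with a circuit-hyperplane, restrict $\mathrm{AG}(2,3)$ to two parallel (disjoint) lines $L_1,L_2$: this is a connected rank-$3$ matroid on six elements in which $L_1$ is a circuit-hyperplane, yet $r(E-L_1)=r(L_2)=2<3=r(M)$. More simply, if $M$ has any $2$-separation $(X,E-X)$ with $X$ a circuit-hyperplane, then $r(E-X)=\lambda_M(X)+1=2<r(M)$ whenever $r(M)\geq 3$. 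So $\lambda_{M'}(X)$ is not $r(M)$ in general; it is $r_M(E-X)$, which can be strictly smaller.

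The case is easily repaired without any spanning assumption. Since $r_M(X)=r(M)-1$, one has $\lambda_M(X)=r_M(E-X)-1$, and since $r_{M'}(X)=r(M)$ and $r_{M'}(E-X)=r_M(E-X)$, one has $\lambda_{M'}(X)=r_M(E-X)=\lambda_M(X)+1$. Now suppose $(X,E-X)$ is a $k$-separation of $M'$ with $k<n$. If $k=1$ then $\lambda_{M'}(X)\leq 0$ forces every element of $E-X$ to be a loop; but $X$ is a flat, so all loops lie in $X$, and $E-X\neq\emptyset$, a contradiction. If $k\geq 2$, then $\lambda_M(X)=\lambda_{M'}(X)-1\leq k-2$ while $|X|,|E-X|\geq k\geq k-1$, so $(X,E-X)$ is a $(k-1)$-separation of $M$ with $k-1<n$, contradicting the $n$-connectedness of $M$. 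With this substitution, your argument goes through.
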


Kahn \cite{kahn1985problem} proved the following result on the representability of a circuit-hyperplane relaxation.

\begin{lemma}
\label{nonbinary}
Let $M'$ be a matroid that is obtained by relaxing a circuit-hyperplane of a matroid $M$. If $M$ is connected, then $M'$ is non-binary.
\end{lemma}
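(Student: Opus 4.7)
The plan is to assume $M'$ is binary and deduce a direct-sum decomposition of $M$, contradicting connectedness. The heart of the argument is to identify enough circuits of $M'$ containing $X$ so that the symmetric-difference parity condition for binary matroids forces $E(M)-X$ to collapse to a rank-one set.

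First I would check some standing facts: the hypotheses force $r(M)\geq 2$, $M$ has no loops or coloops, and $|E(M)-X|\geq 2$ (otherwise the cocircuit $E(M)-X$ would consist of a single coloop). The key structural observation is that for every $f\in E(M)-X$, the set $X\cup\{f\}$ is a circuit of $M'$. To see this, note that $X$ is a closed circuit of $M$, so $\cl_M(X-x)=X$ for each $x\in X$, and hence $(X-x)\cup\{f\}$ is a basis of $M$ for every $x\in X$ and every $f\in E(M)-X$. Both these bases and the relaxed basis $X$ are independent in $M'$, while $X\cup\{f\}$ is too large to be, so $X\cup\{f\}$ is a circuit of $M'$.

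Now suppose $M'$ is binary and pick distinct $f_1,f_2\in E(M)-X$. Then $(X\cup\{f_1\})\,\triangle\,(X\cup\{f_2\})=\{f_1,f_2\}$ must be a disjoint union of circuits, and since $M'$ is loopless this forces $\{f_1,f_2\}$ itself to be a circuit of $M'$. As $\{f_1,f_2\}\not\subseteq X$, it is also a circuit of $M$, so every two elements of $E(M)-X$ are parallel in $M$ and $r_M(E(M)-X)=1$. Combined with $r_M(X)=r(M)-1$, this gives $r_M(X)+r_M(E(M)-X)=r(M)$, forcing the separation $M=(M|X)\oplus (M|(E(M)-X))$ and contradicting connectedness. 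The only non-routine step is the identification of $X\cup\{f\}$ as a circuit of $M'$; once that is in hand the binary parity argument closes the proof immediately, so I do not expect any genuine obstacle beyond that observation.
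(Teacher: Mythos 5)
The paper does not give its own proof of this lemma; it is stated as a cited result of Kahn, so there is no internal argument to compare against. Judged on its own terms, your proof is correct and complete: you correctly establish that $|E(M)-X|\geq 2$ and that $M'$ is loopless, that $X\cup\{f\}$ is a circuit of $M'$ for every $f\in E(M)-X$ (its two kinds of proper $r(M)$-subsets, $X$ and $(X-x)\cup\{f\}$, are all bases of $M'$), that the binary symmetric-difference axiom then forces every pair $\{f_1,f_2\}\subseteq E(M)-X$ to be a circuit of $M'$ and hence of $M$, and finally that $r_M(X)+r_M(E(M)-X)=r(M)$ gives a $1$-separation contradicting connectedness. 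This is the standard argument via fundamental circuits with respect to the new basis $X$, and nothing further is needed.
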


We use the following definition of the rank function of the $2$-sum from \cite{jaeger1990computational}. Let $M_1$ and $M_2$ be matroids with at least two elements such that $E(M_1)\cap E(M_2)=\{p\}$. Then $M=M_1\oplus_2 M_2$ has rank function $r_M$ defined for all $A_1\subseteq E(M_1)$ and $A_2\subseteq E(M_2)$ by \[r_M(A_1\cup A_2)=r_{M_1}(A_1)+r_{M_2}(A_2)-\theta(A_1,A_2)+\theta(\emptyset, \emptyset)\]
where $\theta(X,Y)=1$ if $r_{M_1}(X\cup p)=r_{M_1}(X)$ and $r_{M_2}(Y\cup p)=r_{M_2}(Y)$, and $\theta(X,Y)=0$ otherwise.

The next three results on 2-sums and minors of $2$-sums are well known. 

\begin{lemma}
\label{2sumcircuits}
\cite[Proposition 7.1.20]{oxley2011matroid}
Let $M$ and $N$ be matroids with at least two elements. Let $E(M)\cap E(N)=\{p\}$ and suppose that neither $M$ nor $N$ has $\{p\}$ as a separator. The set of circuits of $M\oplus_2 N$ is
$$\mathcal{C}(M\del p)\cup \mathcal{C}(N\del p)\cup \{(C\cup D)-p: p\in C\in \mathcal{C}(M) \ and  \ p\in D\in \mathcal{C}(N)\}.$$
\end{lemma}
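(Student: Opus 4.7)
The plan is to derive the circuit structure of $M \oplus_2 N$ directly from the rank formula given just above the statement. First I would observe that since $\{p\}$ is not a separator of either summand, $p$ is neither a loop nor a coloop in either $M$ or $N$; in particular $\theta(\emptyset, \emptyset) = 0$, so the rank function simplifies to
\[ r_M(A_1 \cup A_2) = r_{M_1}(A_1) + r_{M_2}(A_2) - \theta(A_1, A_2). \]
I would then fix a candidate circuit $S$, write $S = A_1 \cup A_2$ with $A_i \subseteq E(M_i) - p$, and split into cases by which $A_i$ are empty.

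When $A_2 = \emptyset$, the fact that $p$ is not a loop of $M_2$ forces $\theta(A_1, \emptyset) = 0$, giving $r_M(S) = r_{M_1}(S) = r_{M_1 \del p}(S)$. Hence the restriction $(M \oplus_2 N) | (E(M_1) - p)$ coincides with $M_1 \del p$, so the circuits of $M \oplus_2 N$ contained in $E(M_1) - p$ are exactly those of $M_1 \del p$. The symmetric argument gives the circuits lying in $E(M_2) - p$.

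The substantive case is $A_1, A_2$ both nonempty. I would first argue that minimality forces $\theta(A_1, A_2) = 1$ and each $A_i$ independent in $M_i$: if $\theta(A_1, A_2) = 0$ then $r_M(S) = r_{M_1}(A_1) + r_{M_2}(A_2)$, so $S$ being dependent requires some $A_i$ to contain a circuit of $M_i \del p$, and that circuit is already a circuit of $M \oplus_2 N$ by the previous case, contradicting minimality of $S$. The same contradiction applies if $\theta(A_1, A_2) = 1$ but some $A_i$ is itself dependent in $M_i$. Under both conditions $C := A_1 \cup \{p\}$ and $D := A_2 \cup \{p\}$ are circuits of $M_1$ and $M_2$ through $p$, and $S = (C \cup D) - p$, matching the third family in the statement.

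For the converse, given circuits $C \in \mathcal{C}(M)$ and $D \in \mathcal{C}(N)$ with $p \in C \cap D$, setting $S = (C \cup D) - p$, a direct application of the rank formula with $\theta(C - p, D - p) = 1$ gives $r_M(S) = (|C|-1) + (|D|-1) - 1 = |S| - 1$. Minimality then follows because removing any $e \in S$ breaks the dependence $p \in \cl_{M_i}(C - p)$ on the side containing $e$ (since $C - e$ is independent in $M_i$), so $\theta$ toggles from $1$ to $0$ and the rank of $S - e$ rises to $|S| - 1$. The main obstacle is keeping the $\theta$-bookkeeping straight under element removal; once that is done, the three-way case split exhausts all circuits and yields exactly the families in the statement.
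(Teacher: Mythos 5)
The paper does not prove this lemma — it cites it directly from Oxley's book, where the $2$-sum is defined through parallel connections and the circuit description is essentially built in. Your rank-function route is therefore a genuinely different (and reasonable) approach given that the paper itself chooses to present the $2$-sum via its rank function, and most of your argument is sound: the observation that $\theta(\emptyset,\emptyset)=0$, the identification of $(M\oplus_2 N)|(E(M_1)-p)$ with $M_1\backslash p$, and the verification that $(C\cup D)-p$ is a circuit (including the $\theta$-toggling under single-element removal) are all correct.

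There is, however, a gap in the forward direction of the substantive case. You establish, correctly and from minimality, that if $S=A_1\cup A_2$ is a circuit with both $A_i$ nonempty then $\theta(A_1,A_2)=1$ and each $A_i$ is independent in $M_i$. You then assert that ``under both conditions'' $A_i\cup\{p\}$ is a circuit of $M_i$. But independence of $A_i$ together with $p\in\cl_{M_i}(A_i)$ only yields that $A_i\cup\{p\}$ is \emph{dependent}: it contains a unique circuit through $p$, and nothing so far forces that circuit to use all of $A_i$. You need one more minimality step: if $A_1\cup\{p\}$ properly contained a circuit $C'\ni p$, then $A_1':=C'-p$ would be a proper independent subset of $A_1$ with $p\in\cl_{M_1}(A_1')$, so $\theta(A_1',A_2)=1$ and the rank formula would give
\[
r(A_1'\cup A_2)=|A_1'|+|A_2|-1<|A_1'\cup A_2|,
\]
exhibiting a dependent proper subset of $S$ and contradicting that $S$ is a circuit. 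With that inserted (and its mirror for $A_2$), the argument closes.
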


\begin{lemma}
\label{2sum}
\cite[Theorem 8.3.1]{oxley2011matroid}
A connected matroid $M$ is not $3$-connected if and only if $M=M_1\oplus_2 M_2$ for some matroids $M_1$ and $M_2$, each of which has at least three elements and is isomorphic to a proper minor of $M$.
\end{lemma}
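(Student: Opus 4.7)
The plan is to prove the two directions separately.

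For the converse direction, suppose $M = M_1 \oplus_2 M_2$ with basepoint $p$, where $|E(M_i)| \geq 3$ for $i=1,2$. Set $X = E(M_1) - p$ and $Y = E(M_2) - p$, so $|X|, |Y| \geq 2$. I would apply the rank formula displayed just before Lemma~\ref{2sumcircuits} with $A_1 = X$ and $A_2 = Y$: since $p$ is neither a coloop nor a loop of either $M_i$, we have $r_{M_i}(A_i) = r(M_i)$, giving $\theta(X, Y) = 1$, while $\theta(\emptyset, \emptyset) = 0$. A short computation, together with analogous evaluations of $r_M(X)$ and $r_M(Y)$, yields $r_M(X) + r_M(Y) - r(M) = 1$, so $(X, Y)$ is a $2$-separation of $M$ and $M$ is therefore not $3$-connected.

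For the forward direction, let $(X, Y)$ be a $2$-separation of connected $M$, so that $r(X) + r(Y) - r(M) = 1$ and $|X|, |Y| \geq 2$. I would introduce a new element $p$ and construct matroids $M_1$ on $X \cup \{p\}$ and $M_2$ on $Y \cup \{p\}$, guided by Lemma~\ref{2sumcircuits}. Declare the circuits of $M_1$ to be the circuits of $M|X$, together with all sets $C \cup \{p\}$ for which $C \subseteq X$ is inclusion-minimal subject to the existence of $D \subseteq Y$ with $C \cup D \in \mathcal{C}(M)$; define $M_2$ symmetrically. The main obstacle will be verifying that this collection satisfies the circuit axioms, in particular (strong) elimination for two $p$-containing candidates sharing a common element $x \in X$. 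The crucial input is that the $2$-separation forces a rank-$1$ bridge between $X$ and $Y$: the $X$-parts of all crossing circuits of $M$ are coupled to $Y$-parts drawn from a single rank-$1$ family, which is exactly the data of a single-element extension of $M|X$ by $p$, making the construction well-defined.

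Once $M_1$ and $M_2$ are established as matroids, the equality $M = M_1 \oplus_2 M_2$ is verified by matching circuit collections against Lemma~\ref{2sumcircuits}: circuits of $M$ lying inside $X$ (resp.\ $Y$) match those of $M_1 \setminus p$ (resp.\ $M_2 \setminus p$), and crossing circuits match the $(C \cup D) - p$ description. Each $M_i$ has at least three elements because $|X|, |Y| \geq 2$. Finally, to show that $M_1$ is isomorphic to a proper minor of $M$, I would pick an element $y \in Y$ lying on some crossing circuit of $M$ (which exists by connectivity), and then contract and delete the elements of $Y - y$ according to a basis-cobasis decomposition of $M|Y$ relative to $y$, reducing the $Y$-side to a single element; the resulting minor on $X \cup \{y\}$ will be isomorphic to $M_1$ via $y \leftrightarrow p$, and is a proper minor since $|Y| \geq 2$. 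The symmetric argument handles $M_2$, completing the proof.
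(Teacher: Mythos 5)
This result is quoted from Oxley's book (Theorem 8.3.1) and is not reproved in the paper, so I am comparing your attempt to the standard argument there. Your converse direction is fine: evaluating the displayed rank formula at $A_1 = E(M_1)-p$, $A_2 = E(M_2)-p$ (and then at $A_1 = E(M_1)-p$, $A_2 = \emptyset$ and vice versa) does give $r_M(X)+r_M(Y)-r(M) = 1$, since $p$ is neither a loop nor a coloop of either part, and $|X|,|Y|\geq 2$ makes this an exact $2$-separation.

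The forward direction is where the real content lies, and that is exactly where your proposal thins out. You define $M_1$ on $X\cup\{p\}$ by prescribing its circuits and then acknowledge that verifying the circuit-elimination axiom is ``the main obstacle,'' resolving it with the one-sentence claim that the $2$-separation ``forces a rank-$1$ bridge'' so that the $Y$-parts of crossing circuits come from ``a single rank-$1$ family.'' That claim is not self-evident, is not made precise (e.g.\ why the $X$-parts of crossing circuits form an antichain, and why elimination between two $p$-circuits over an element of $X$ lands back in your prescribed family rather than producing a circuit of $M$ that straddles the separation in some uncontrolled way), and it is essentially the entire difficulty of the theorem. Moreover, you later re-derive $M_1$ as a minor of $M$ in order to establish the ``proper minor'' clause; this duplicates work. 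The standard route, and the one you should adopt, is to reverse your order: construct $M_1$ and $M_2$ \emph{directly} as minors of $M$, by choosing a crossing circuit, an element $y\in Y$ on it, and a suitable contraction/deletion of $Y-y$ (and symmetrically for $X$), relabelling $y$ as $p$. Then the matroid axioms hold for free because minors of matroids are matroids, and what remains to check is the circuit (or rank) description of $M_1\oplus_2 M_2$ against $M$, which is a finite verification using the $2$-separation inequality rather than an axiom-checking argument. As written, your proposal identifies the right ingredients but leaves the decisive step unproved.
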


\begin{lemma}
\label{2summinor}
 \cite[Proposition 8.3.5]{oxley2011matroid}
Let $M,N,M_1,M_2$ be matroids such that $M=M_1\oplus_2 M_2$ and $N$ is $3$-connected. If $M$ has an $N$-minor, then $M_1$ or $M_2$ has an $N$-minor.
\end{lemma}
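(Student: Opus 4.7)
The plan is to induct on $|E(M)| - |E(N)|$. In the base case, $|E(M)| = |E(N)|$ forces $M \cong N$, so $M$ is $3$-connected. But $M = M_1 \oplus_2 M_2$ induces the partition $(E(M_1) - p, E(M_2) - p)$ of $E(M)$. If both parts have size at least $2$, this is a $2$-separation, contradicting $3$-connectedness; so one part has a single element, say $E(M_1) - p = \{e\}$. Since $p$ is neither a loop nor a coloop of $M_1$, it must be that $M_1 \cong U_{1,2}$, and then the circuit description in Lemma~\ref{2sumcircuits} shows that $M \cong M_2$ via the relabeling $p \mapsto e$. In particular, $N$ is a minor of $M_2$.

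For the inductive step, let $e \in E(M)$ be an element deleted or contracted in producing $N$ as a minor of $M$. Using the standard fact $(M_1 \oplus_2 M_2)^* = M_1^* \oplus_2 M_2^*$ together with the fact that $N^*$ is also $3$-connected, I may dualize to assume that $e$ is deleted. By the symmetry of the statement, assume $e \in E(M_1) - p$. Since $p$ is not a loop of $M_1$, deletion cannot make it a loop of $M_1 \setminus e$, so only two cases arise. If $p$ is not a coloop of $M_1 \setminus e$, then a direct comparison of circuits via Lemma~\ref{2sumcircuits} gives $M \setminus e = (M_1 \setminus e) \oplus_2 M_2$, and the inductive hypothesis applied to this smaller $2$-sum (which has an $N$-minor, namely one avoiding $e$) shows $N$ is a minor of $M_1 \setminus e$ or of $M_2$, hence of $M_1$ or $M_2$.

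The main obstacle is the degenerate case where $p$ becomes a coloop of $M_1 \setminus e$. This occurs precisely when every circuit of $M_1$ through $p$ contains $e$, so the third family of circuits in Lemma~\ref{2sumcircuits} (those of the form $(C_1 \cup C_2) - p$) disappears in $M \setminus e$. The remaining circuits lie either inside $(E(M_1) - p) - e$ or inside $E(M_2) - p$, so $M \setminus e = \bigl[(M_1 \setminus e) \setminus p\bigr] \oplus \bigl[M_2 \setminus p\bigr]$ is a direct sum. Since $N$ is $3$-connected it is connected, so $N$ must be a minor of one of the two direct summands, each of which is itself a minor of $M_1$ or $M_2$ respectively. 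This completes the induction.
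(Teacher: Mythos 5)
Your proof is correct, but note that the paper itself gives no argument for this statement: it is quoted verbatim from Oxley's book (Proposition 8.3.5), so there is nothing in the paper to compare against line by line. Your route is a self-contained induction on $|E(M)|-|E(N)|$ driven entirely by the circuit description of the $2$-sum (Lemma~\ref{2sumcircuits}): you commute the deleted element past the $2$-sum, $M\del e=(M_1\del e)\oplus_2 M_2$, and handle the degenerate case, where the basepoint $p$ becomes a coloop of $M_1\del e$, by observing that the $2$-sum then collapses to the direct sum $\bigl((M_1\del e)\del p\bigr)\oplus(M_2\del p)$, at which point connectivity of $N$ finishes the argument; duality covers contraction, and the base case correctly reduces $M_1\cong U_{1,2}$ to a relabelling. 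This is a genuinely different (and more elementary) route from the standard textbook proof, which instead uses the fact that the $2$-sum decomposition induces a $2$-separation, that a $3$-connected minor can meet one side of a $2$-separation in at most one element (in the spirit of Lemma~\ref{fcon}), and that $M_1$ and $M_2$ are themselves isomorphic to minors of $M$; your version buys independence from those connectivity lemmas at the cost of a case analysis on the basepoint. Two small points you leave implicit, neither a gap: you use the standard convention (as in the source of Lemma~\ref{2sumcircuits}) that $p$ is not a loop or coloop of $M_1$ or $M_2$, which the paper's rank-function definition of $\oplus_2$ does not state explicitly; and in the non-degenerate deletion case one should check that $M_1\del e$ still has at least two elements so that the smaller $2$-sum is legitimate, which holds automatically because $|E(M_1)|=2$ would force $E(M_1)=\{p,e\}$ and hence put you in the degenerate coloop case.
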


We can now describe the structure of circuit-hyperplanes in matroids of low connectivity. We omit the straightforward proof of the next lemma.

\begin{lemma}
\label{disconnected}
Let $M$ be a $\gf(4)$-representable matroid with a circuit-hyperplane $H$. If $M$ is not connected, then $M\cong U_{1,m}\oplus U_{n-1,n}$ for some positive integers $m$ and $n$.
\end{lemma}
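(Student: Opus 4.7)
The plan is to exploit the fact that the roles of circuits and cocircuits interact very rigidly with a direct-sum decomposition. Since $M$ is disconnected, I would write $M = M_1 \oplus M_2$ with $E(M_i)$ nonempty for $i = 1,2$, and then use two well-known facts: (a) the cocircuits of $M_1 \oplus M_2$ are exactly the cocircuits of the two summands, and (b) the circuits of $M_1 \oplus M_2$ are exactly the circuits of the two summands.

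First I would apply (a) to the cocircuit $E(M) - H$, which exists because $H$ is a hyperplane. This cocircuit lies entirely in one summand; after relabelling, say $E(M) - H \subseteq E(M_1)$, equivalently $E(M_2) \subseteq H$. Next I would apply (b) to $H$ itself: being a circuit, $H$ is contained in a single summand. Since $E(M_2) \subseteq H$ and $E(M_2)$ is nonempty, $H$ cannot lie in $E(M_1)$, so $H = E(M_2)$ and consequently $E(M) - H = E(M_1)$.

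From here the two summands are pinned down. The set $E(M_2) = H$ is a circuit of $M_2$ by (b), so every proper subset of $E(M_2)$ is independent in $M_2$; setting $n = |E(M_2)|$, this forces $M_2 \cong U_{n-1,n}$. Dually, $E(M_1) = E(M) - H$ is a cocircuit of $M_1$ by (a), so $E(M_1)$ is a circuit of $M_1^*$ and the same argument gives $M_1^* \cong U_{m-1,m}$, i.e.\ $M_1 \cong U_{1,m}$ with $m = |E(M_1)|$. Combining the two yields $M \cong U_{1,m} \oplus U_{n-1,n}$, as required.

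There isn't really a hard step here; the only thing to be careful about is not to overlook the case analysis in (b) that lets $H$ sit in either summand, and to observe that the case $H \subseteq E(M_1)$ is ruled out precisely by the nonemptiness of $E(M_2)$ built into the disconnectedness of $M$. The $\gf(4)$-hypothesis plays no role in the argument; it is inherited from the ambient setting but is not used.
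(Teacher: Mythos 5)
Your argument is correct, and it is surely the one the authors had in mind: the paper explicitly omits the proof as straightforward, and the route via the standard facts that circuits and cocircuits of a direct sum each lie in a single summand is the natural one. Your closing remark is also right that the $\gf(4)$-representability hypothesis is never invoked; the conclusion holds for an arbitrary disconnected matroid with a circuit-hyperplane.
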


We now work towards a description of the $2$-separations of a connected matroid in which the relaxation of some circuit-hyperplane is $\gf(4)$-representable.

\begin{lemma}
\label{seriesparallel}
Let $M$ be a matroid with a circuit-hyperplane $X$. If $A$ is a non-trivial parallel class of $M$, then either $A\subseteq E-X$, or $A=X$ and $|A|=2$.
\end{lemma}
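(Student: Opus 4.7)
The plan is to argue by contradiction from the assumption that $A$ is a non-trivial parallel class (so $|A|\geq 2$) meeting both $X$ and $E-X$, and then separately handle the case $A\subseteq X$. The proof is essentially a sequence of observations about what the closure of a single element in $X$ looks like and what it means for $X$ to be a circuit.

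First I would show that if $A$ meets $X$ then $A\subseteq X$. Fix $a\in A\cap X$ and let $b\in A\setminus\{a\}$. Since $a$ and $b$ are parallel, $b\in\cl(\{a\})$. But $a\in X$, so $\cl(\{a\})\subseteq\cl(X)$, and because $X$ is a hyperplane it is a flat, hence $\cl(X)=X$. Therefore $b\in X$, and so $A\subseteq X$. This disposes of one side of the dichotomy: if $A\not\subseteq E-X$, then $A\subseteq X$.

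Assuming $A\subseteq X$, I would next use the circuit-minimality of $X$. Since $|A|\geq 2$, pick any two distinct $a,b\in A$; these form a $2$-circuit, so in particular $\{a,b\}$ is a dependent subset of $X$. But $X$ is a circuit, so every proper subset of $X$ is independent, forcing $\{a,b\}=X$, which gives both $|X|=2$ and $A=X$. The only step that requires any care is the first one, where I need the two facts that a hyperplane is a flat and that parallel elements lie in each other's closures; everything else is a direct appeal to the minimality of circuits. There is no substantive obstacle here, which is presumably why the authors chose to omit the proof.
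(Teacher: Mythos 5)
Your proof is correct. It differs from the paper's argument only in how you rule out the case that $A$ meets both $X$ and $E-X$: you observe that any element parallel to a point of the flat $X=\cl(X)$ must already lie in $X$, whereas the paper instead picks a $2$-circuit $\{x,y\}\subseteq A$ with $x\in X$ and $y\in E-X$ and notes that this violates orthogonality with the cocircuit $E-X$ (a circuit and a cocircuit cannot meet in exactly one element). Both are one-line observations from standard facts, and they lead to the same dichotomy; the second half, invoking circuit minimality to force $\{a,b\}=X=A$, is identical in the two proofs. Your closure-based version has the minor advantage of not needing to invoke duality (that the complement of a hyperplane is a cocircuit), while the paper's version is marginally more symmetric in flavor with the rest of the section, which leans on cocircuits and orthogonality; either is perfectly acceptable.
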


\begin{proof}
If $A\cap X$ and $A\cap (E-X)$ are both non-empty, then there is a circuit $\{x,y\}$ contained in $A$ such that $x\in X$ and $y\in E-X$. But $E-X$ is a cocircuit of $M$, so this is a contradiction to orthogonality. Thus either $A\cap X$ or $A\cap (E-X)$ is empty. In the case that $A\cap (E-X)$ is empty, there is a circuit $\{x,y\}$ contained in $A$ that is also contained in the circuit $X$, so $X=A=\{x,y\}$.
\end{proof}

For the next result, we say that $M$ is \textit{$3$-connected up to series and parallel classes} if $M$ is connected and, for any $2$-separation $(X,Y)$ of $M$, either $X$ or $Y$ is a series class or a parallel class.

\begin{lemma}
\label{nt2sep}
Let $M$ be a $\gf(4)$-representable matroid with a circuit-hyperplane $X$ such that the matroid $M'$ obtained from $M$ is also $\gf(4)$-representable. If $M$ is connected but not $3$-connected, then $M$ is $3$-connected up to series and parallel classes. 
\end{lemma}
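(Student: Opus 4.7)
The plan is to analyze, for an arbitrary 2-separation $(Y, Z)$ of $M$, the positions of both the circuit-hyperplane $X$ and its complementary cocircuit $E - X$ relative to the associated 2-sum decomposition, and to deduce that one of $Y, Z$ must be a series or parallel class of $M$.

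Since $M$ is connected but not 3-connected, Lemma \ref{2sum} gives, for the chosen 2-separation, a decomposition $M = M_1 \oplus_2 M_2$ with basepoint $p$, where $Y = E(M_1) - p$ and $Z = E(M_2) - p$. By Lemma \ref{2sumcircuits} applied to $M$ and dually to $M^*$, each of $X$ and $E - X$ falls into one of three types: contained in $Y$, contained in $Z$, or a \emph{crossing} set of the form $(A \cup B) - p$ with $A, B$ belonging to the appropriate circuit or cocircuit family of $M_1, M_2$ and both containing $p$.

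The argument then proceeds by case analysis. First, the possibility that both $X$ and $E - X$ cross is ruled out: if $C := (X \cap Y) \cup \{p\}$ is a circuit of $M_1$ and $C^* := ((E - X) \cap Y) \cup \{p\}$ is a cocircuit of $M_1$, both containing $p$, then since $X$ and $E - X$ are disjoint, $C \cap C^* = \{p\}$, violating orthogonality in $M_1$. In all remaining cases, I would show that either $M_1$ or $M_2$ is forced to be a uniform matroid $U_{1,k}$ or $U_{k-1,k}$, so that the corresponding side is a parallel or series class. For instance: if $X$ is a crossing circuit with $E - X \subseteq Y$, writing $X = (C \cup D) - p$ gives $D - p = Z$, forcing $D = E(M_2)$ to be a circuit of $M_2$, so $M_2 \cong U_{|Z|, |Z|+1}$ and $Z$ is a series class of $M$; if instead $X \subseteq Y$ and $E - X$ crosses, a dual analysis forces $M_2 \cong U_{1, |Z|+1}$ and $Z$ is a parallel class; and if $X = Z$ (so $E - X = Y$), then $Y$ is a cocircuit of $M_1 \del p$ equal to its whole ground set, forcing $M_1 \cong U_{1, |Y|+1}$ and $Y$ to be a parallel class.

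The main obstacle will be the bookkeeping to ensure the case analysis is exhaustive, handling the symmetries between $X$ and $E - X$ and between $Y$ and $Z$ (yielding roughly six subcases in total). Notably, the $\gf(4)$-representability of $M$ and $M'$ does not appear to be invoked in this argument; the conclusion looks like a general structural consequence of having a circuit-hyperplane together with a 2-separation, which suggests the proof will be short and elementary once the case structure is laid out.
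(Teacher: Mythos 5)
Your own closing observation---that the $\gf(4)$-representability of $M'$ seems not to be used---is the tell that something has gone wrong, because the statement is false without that hypothesis. Take $M = U_{2,4}\oplus_2 U_{2,4}$ with basepoint $p$, and let $X = E(M_2)-p$ be the triangle on one side of the $2$-sum. Then $X$ is a circuit-hyperplane of $M$, $M$ is connected, not $3$-connected, and $\gf(4)$-representable, yet the canonical $2$-separation $(E(M_1)-p,\,E(M_2)-p)$ has a triangle on each side, so neither side is a series or parallel class. The lemma is saved only because relaxing $X$ here produces $P_6$, which is not $\gf(4)$-representable---precisely the fact the paper invokes at the end of its proof.

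The specific place your case analysis breaks is your last sub-case, $X = Z = E(M_2)-p$ with $E-X = Y = E(M_1)-p$. You assert that $E-X$ is ``a cocircuit of $M_1\del p$ equal to its whole ground set,'' but cocircuits of a $2$-sum contained entirely in one part come from $M_1/p$, not $M_1\del p$ (dualize Lemma \ref{2sumcircuits}); with the contraction one only learns $r(M_1/p)=1$, i.e.\ $r(M_1)=2$, which is compatible with $M_1\cong U_{2,m}$ for any $m\geq 4$ rather than forcing $M_1\cong U_{1,|Y|+1}$. The example above instantiates exactly this. This configuration is what the paper's proof treats as its second (and genuinely hard) case: it pins down $\si(N)\cong U_{2,m}$ and, dually, $\co(N')\cong U_{n-1,n+1}$, then extracts a $U_{2,4}\oplus_2 U_{2,4}$-minor whose circuit-hyperplane relaxation would have to remain $\gf(4)$-representable, and gets a contradiction because that relaxation is $P_6$. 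Your orthogonality argument ruling out a doubly crossing circuit-hyperplane, and your other sub-cases, are fine, but the $X=Z$ case cannot be closed off by the uniform-matroid bookkeeping alone; it requires the representability hypothesis that you set aside.
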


\begin{proof}
Assume that $M$ has a $2$-separation $(S,T)$ where neither side is a series or parallel class. Then $M$ has a $2$-sum decomposition of the form $M=N\oplus_2 N'$ for some $N$ and $N'$ with $E(N)\cap E(N')=\{p\}$, where neither $N$ nor $N'$ is a circuit or cocircuit. 

First suppose that the circuit $X$ of $M$ has the form $(C\cup C')-p$, where $C$ is a circuit of $N$, and $C'$ is a circuit of $N'$ while $p \in C\cap C'$. Then 

\begin{equation}
\label{2sep1}
r(X)=r(M)-1,
\end{equation} 
\begin{equation}
\label{2sep2}
r(N)+r(N')-1=r(M),
\end{equation}and 
\begin{equation}
\label{2sep3}
r_M(X)= r_N(C)+r_{N'}(C')-1.
\end{equation}  

Equation \eqref{2sep1} follows from the fact that $X$ is a hyperplane of $M$; Equations \eqref{2sep2} and \eqref{2sep3} follow from the definition of the rank function of the $2$-sum of $N$ and $N'$. Combining \eqref{2sep1} and \eqref{2sep2}, we see that $r(X)=r(N)+r(N')-2$. Then combining this equation with \eqref{2sep3}, we see that \[r(C)+r(C')=r(N)+r(N')-1.\] We may therefore assume that $C$ is a spanning circuit of $N$, and hence that $E(N)=C$ because the hyperplane $X$ is closed. Therefore $N$ is a circuit, a contradiction.

By symmetry, it remains to consider the case when $X$ is a circuit of $N'\del p$. Then $r(X)\leq r(N')$. Since $X$ is a hyperplane of $M$, and $r(M)=r(N)+r(N')-1$, it follows that $r(N)\leq 2$. Since $N$ is not a cocircuit, we deduce that $r(N)=2$. Then $r(M)=r(N')+1$, so $r(X)=r(N')=r(N'\del p)$. Since $N$ is not a circuit we deduce that $\si(N)\cong U_{2,m}$ for some $m\geq 4$. Moreover, $p$ is not in a non-trivial parallel class in $N$ otherwise $X$ is not a hyperplane of $M$.

Switching to $M^{*}$, we see that $r_{M^{*}}(N') = |X| + r(N) - r(M) = r(N) = 2$. As above, it follows that $\co(N')\cong U_{n-1,n+1}$ for some $n\geq 3$. Moreover, $p$ is not in a non-trivial series class in $N'$. Let $X_1$ consist of one representative of each series class of $N'$, and let $Y_1$ consist of one representative of each parallel class of $N$. By contracting elements of $X-X_1$ and deleting elements of $(E(M)-X)-Y_1$, we obtain  $U_{n-1,n+1}\oplus_2 U_{2,m}$ as a minor of $M$ for some $n\geq 3$ and $m\geq 4$. Moreover, by Lemma \ref{Hminors}, $X_1$ is a circuit-hyperplane of this minor whose relaxation is $\gf(4)$-representable. Thus $X_1\subseteq E(U_{n-1,n+1})$. Contract $n-3$ elements from $X_1$ and delete $m-4$ elements from $Y_1$ to get $U_{2,4}\oplus_2 U_{2,4}$. Relaxing a circuit-hyperplane of this minor gives $P_6$ which is not $\gf(4)$-representable (see \cite[Proposition 6.5.8]{oxley2011matroid}), a contradiction.\end{proof}

\section{A fragility theorem}

We will use the following consequence of Geelen, Oxley, Vertigan, and Whittle \cite[Theorem 8.4]{geelen1998weak}. 

\begin{theorem}
\label{geelen1998weak}
Let $M$ and $M'$ be $\gf(4)$-representable matroids with the properties that $M$ is connected, $M'$ is $3$-connected, and $M'$ is obtained from $M$ by relaxing a circuit-hyperplane.
\begin{enumerate}
\item[(i)] If $M'$ has a $U_{2,4}$-minor but no $\{U_{2,5}, U_{3,5}\}$-minor, then $M$ is binary.
\item[(ii)] If $M'$ has a $\{U_{2,5}, U_{3,5}\}$-minor but no $U_{3,6}$-minor, then $M$ has no $\{U_{2,5}, U_{3,5}\}$-minor.
\end{enumerate}
\end{theorem}

We can now prove the main result of this section.

\begin{theorem}
\label{fragilitymain}
Let $M$ and $M'$ be $\gf(4)$-representable matroids such that $M$ is connected, $M'$ is $3$-connected, and $M'$ is obtained from $M$ by relaxing a circuit-hyperplane $X$. Then $M'$ is either $U_{2,4}$-fragile or $\{U_{2,5}, U_{3,5}\}$-fragile. Moreover, $X$ is a basis of $M'$ whose elements are nondeletable such that the elements of the cobasis $E(M')-X$ are noncontractible.
\end{theorem}

\begin{proof}
First assume that $M'$ has no $\{U_{2,5}, U_{3,5}\}$-minor. By Lemma \ref{nonbinary} and Theorem \ref{geelen1998weak} (i), $M'$ has a $U_{2,4}$-minor and $M$ has no $U_{2,4}$-minor. Then it follows from Lemma \ref{Hfragile} that $M'$ is $U_{2,4}$-fragile, and $M'$ has a basis $X$ whose elements are nondeletable such that the elements of the cobasis $E(M')-X$ are noncontractible.

We may now assume that $M'$ has a $\{U_{2,5}, U_{3,5}\}$-minor. Suppose that $M$ also has a $\{U_{2,5}, U_{3,5}\}$-minor, and assume that $M$ is a minor-minimal matroid with respect to the hypotheses; that is, we assume that $M$ has no proper minor $M_0$ such that $M_0$ is connected, $M_0$ has a $\{U_{2,5}, U_{3,5}\}$-minor, and $M_0$ has a circuit-hyperplane whose relaxation $M_0'$ is $3$-connected, $\gf(4)$-representable, and has a $\{U_{2,5}, U_{3,5}\}$-minor.

\begin{claim}
\label{minfragile}
$M$ is $\{U_{2,5}, U_{3,5}\}$-fragile.
\end{claim}

\begin{subproof}
Suppose that $M$ has an element $e\in E(M)-X$ such that $M\del e$ has a $\{U_{2,5}, U_{3,5}\}$-minor. If $M\del e$ is $3$-connected, then we have a contradiction to the minimality of $M$. Therefore, by Lemma \ref{nt2sep}, $M\del e$ is $3$-connected up to series and parallel pairs. Suppose that $A$ is a non-trivial parallel class of $M\del e$. Suppose $A\subseteq X$. Then $A=X$ and $|A|=2$ by Lemma \ref{seriesparallel}, so we deduce that $M\del e$ is a parallel extension of $U_{2,5}$ and hence that $M'\del e$ has a $U_{2,6}$-minor, a contradiction to the fact that the matroid $M'$ obtained from $M$ by relaxing $X$ is $\gf(4)$-representable. Thus $A\subseteq E(M\del e)-X$ by Lemma \ref{seriesparallel}. By duality, any non-trivial series class of $M\del e$ must be contained in $X$. Then, by Lemma \ref{2summinor}, the matroid $M_0$ obtained from $M\del e$ by deleting all but one element of every non-trivial parallel class and contracting all but one element of every non-trivial series class has a $\{U_{2,5}, U_{3,5}\}$-minor. We deduce from Lemma \ref{nt2sep} that $M_0$ is $3$-connected. Then $M_0$ contradicts the minimality of $M$. Therefore $M\del e$ has no $\{U_{2,5}, U_{3,5}\}$-minor for all $e\in E(M)-X$, and, by duality, $M/e$ has no $\{U_{2,5}, U_{3,5}\}$-minor for all $e\in X$, so $M$ is $\{U_{2,5}, U_{3,5}\}$-fragile. This completes the proof of \ref{minfragile}. \phantom\qedhere
\end{subproof}

Since $M$ has a $\{U_{2,5}, U_{3,5}\}$-minor, it follows from Theorem \ref{geelen1998weak} (ii) that $M'$ has a $U_{3,6}$-minor, that is, $M'/C\del D\cong U_{3,6}$ for some subsets $C$ and $D$. If $C\subseteq X$ and $D\subseteq E(M')-X$, then it follows from Lemma \ref{Hminors} that $U_{3,6}$ can be obtained from $M/C\del D$ by relaxing the circuit-hyperplane $X-C$. Hence $M/C\del D\cong P_6$, a contradiction because $M/C\del D$ is $\gf(4)$-representable but $P_6$ is not. Therefore $C\cap (E(M')-X)$ or $D\cap X$ is nonempty, so $M/C\del D=M'/C\del D\cong U_{3,6}$ by Lemma \ref{Hminors}. This is a contradiction to \ref{minfragile} because any minor of $M$ must also be $\{U_{2,5}, U_{3,5}\}$-fragile, but for any $e$, both $U_{3,6}\del e$ and $U_{3,6}/e$ have a $\{U_{2,5}, U_{3,5}\}$-minor. We conclude that $M$ has no $\{U_{2,5}, U_{3,5}\}$-minor. It now follows from Lemma \ref{Hfragile} that $M'$ is $\{U_{2,5}, U_{3,5}\}$-fragile, and that $M'$ has a basis $X$ whose elements are nondeletable such that the elements of the cobasis $E(M')-X$ are noncontractible.
\end{proof}

\section{The structure of $\{U_{2,5},U_{3,5}\}$-fragile matroids}
\label{fragilestructure}

\subsection{Partial Fields and Constructions}

We briefly state the necessary material on partial fields. For a more thorough treatment, we refer the reader to \cite{pendavingh2010confinement}.

A \emph{partial field} is a pair $\mathbb{P} = (R,G)$, where $R$ is a commutative ring with unity, and $G$ is a subgroup of the units of $R$ with $-1 \in G$. A matrix with entries in $G$ is a \emph{$\mathbb{P}$-matrix} if $\det(D) \in G \cup \{0\}$ for any square submatrix $D$ of $A$. We use $\langle X \rangle$ to denote the multiplicative subgroup of $R$ generated by the subset $X$.

A rank-$r$ matroid $M$ on the ground set $E$ is \emph{$\mathbb{P}$-representable} if there is an $r\times |E|$ $\mathbb{P}$-matrix $A$ such that, for each $r\times r$ submatrix $D$, the determinant of $D$ is nonzero if and only if the corresponding subset of $E$ is a basis of $M$. When this occurs, we write $M = M[A]$. 

The \textit{$2$-regular} partial field is defined as follows. 

\[\mathbb{U}_2 = \left(\mathbb{Q}(\alpha,\beta), \langle -1, \alpha, \beta, 1-\alpha, 1-\beta,\alpha-\beta\rangle\right),\]
where $\alpha$,$\beta$ are indeterminates.

It is well-known that any $\mathbb{U}_2$-representable matroid is $\gf(4)$-representable \cite{oxley2000generalized}. On the other hand, there are $\gf(4)$-representable matroids that are not $\mathbb{U}_2$-representable. We now define three such matroids. The matroid $P_8$ has a unique pair of disjoint circuit-hyperplanes; we let $P_8^{-}$ denote the unique matroid obtained by relaxing one of these circuit-hyperplanes. We denote by $F_7^{=}$ the matroid obtained from the non-Fano matroid $F_7^{-}$ by relaxing a circuit-hyperplane. The $\gf(4)$-representable matroids $P_8^{-}$,$F_7^{=}$, $(F_7^{=})^{*}$ are not $\mathbb{U}_2$-representable. We note that this can be deduced from \cite{chunfragile} since $P_8^{-}$,$F_7^{=}$, $(F_7^{=})^{*}$ are $\{U_{2,5}, U_{3,5}\}$-fragile matroids. Since these matroids are not $\mathbb{U}_2$-representable, we have the following lemma.

\begin{lemma}
\label{not2regular}
The class of $\mathbb{U}_2$-representable matroids is contained in the class of $\gf(4)$-representable matroids with no $\{P_8^{-},F_7^{=}, (F_7^{=})^{*}\}$-minor.
\end{lemma}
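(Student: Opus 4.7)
The plan is to assemble this as a short direct argument from the two facts already noted in the paragraph preceding the lemma. The lemma asserts two containments: first, that every $\mathbb{U}_2$-representable matroid is $\gf(4)$-representable, and second, that no such matroid has any of $P_8^{-}$, $F_7^{=}$, $(F_7^{=})^{*}$ as a minor. For the first containment, I would simply cite \cite{oxley2000generalized}, where it is proved that $\mathbb{U}_2$-representability implies representability over every field with at least four elements; in particular, $\mathbb{U}_2$-representable matroids are $\gf(4)$-representable.

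For the second containment, the key observation is that the class of $\mathbb{P}$-representable matroids is minor-closed for any partial field $\mathbb{P}$: given a $\mathbb{P}$-matrix $A$ with $M = M[A]$, one can perform pivots and row/column deletions within the $\mathbb{P}$-matrix framework to realize any minor of $M$ as $M[A']$ for some $\mathbb{P}$-matrix $A'$. Applying this to $\mathbb{U}_2$, if some $\mathbb{U}_2$-representable matroid $M$ had a minor $N \in \{P_8^{-}, F_7^{=}, (F_7^{=})^{*}\}$, then $N$ would itself be $\mathbb{U}_2$-representable. But the preceding discussion records that each of $P_8^{-}$, $F_7^{=}$, $(F_7^{=})^{*}$ fails to be $\mathbb{U}_2$-representable, a fact which follows from the classification of $\{U_{2,5},U_{3,5}\}$-fragile $\mathbb{U}_2$-representable matroids in \cite{chunfragile} (none of these three matroids appears on that list, even though each is $\gf(4)$-representable and $\{U_{2,5},U_{3,5}\}$-fragile). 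This contradiction completes the proof.

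There is essentially no hard step: the main content is imported, namely the representability result of \cite{oxley2000generalized} and the fragile-matroids classification of \cite{chunfragile}. The only thing one must be careful about is invoking minor-closure of $\mathbb{U}_2$-representability, which is standard for partial fields. I would keep the written proof to a few sentences, pointing the reader to the two cited sources for the substantive input.
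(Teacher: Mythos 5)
Your proof is correct and follows essentially the same argument the paper sketches in the paragraph preceding the lemma: $\mathbb{U}_2$-representability implies $\gf(4)$-representability by \cite{oxley2000generalized}, the three matroids are not $\mathbb{U}_2$-representable by the fragile-matroid classification of \cite{chunfragile}, and minor-closure of $\mathbb{U}_2$-representability gives the exclusion. The only detail you make explicit that the paper leaves implicit is the minor-closure step, which is indeed standard for partial fields.
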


To describe the structure of $\{U_{2,5}, U_{3,5}\}$-fragile matroids as in \cite{clark2016fragile}, we need two constructions: the generalized $\Delta$-$Y$ exchange, and gluing on wheels. For a more thorough treatment of these constructions, we refer the reader to \cite{oxley2000generalized} and \cite{chun2013fan}.

Loosely speaking, the operations of generalized $\Delta$-$Y$ exchange and gluing on wheels both involve gluing matroids together along a common restriction. Let $M_1$ and $M_2$ be matroids with a common restriction $A$, where $A$ is a modular flat of $M_1$. The \textit{generalized parallel connection} of $M_1$ and $M_2$ along $A$, denoted $P_A(M_1,M_2)$, is the matroid obtained by gluing $M_1$ and $M_2$ along $A$. It has ground set $E(M_1)\cup E(M_2)$, and a set $F$ is a flat of $P_A(M_1,M_2)$ if and only if $F\cap E(M_i)$ is a flat of $M_i$ for each $i$ (see \cite[Section 11.4]{oxley2011matroid}). 

A subset $S$ of $E(M)$ is a \textit{segment} of $M$ if every three-element subset of $S$ is a triangle of $M$. Let $M$ be a matroid with a $k$-element segment $A$. Intuitively, a generalized $\Delta$-$Y$ exchange on $A$ turns the segment $A$ into a $k$-element cosegment. To define the generalized $\Delta$-$Y$ exchange formally, we first recall the following definition of a family of matroids $\Theta_k$ from \cite{oxley2000generalized}. For $k\geq 3$, fix a basis $B=\{b_1,b_2,\ldots, b_k\}$ of the rank-$k$ projective geometry $PG(k-1,\mathbb{R})$, and choose a line $L$ of $PG(k-1,\mathbb{R})$ that is freely placed relative to $B$. If follows from modularity that, for each $i$, the hyperplane spanned by $B-\{b_i\}$ meets $L$; we let $a_i$ be the point of intersection. Let $A=\{a_1,a_2,\ldots, a_k\}$, and let $\Theta_k$ be the matroid obtained by restricting $PG(k-1,\mathbb{R})$ to the set $A\cup B$. Note that the matroid $\Theta_k$ has $A$ as a modular $k$-point segment $A$, so the generalized parallel connection of $\Theta_k$ and $M$ along $A$ is well-defined. If the $k$-element segment $A$ is coindependent in $M$, then we define the matroid $\Delta_A(M)$ to be the matroid obtained from $P_A(\Theta_k,M)\del A$ by relabeling the elements of $E(\Theta_k) - A$ by $A$ in the natural way, and we say that $\Delta_A(M)$ is obtained from $M$ by performing a \textit{generalized $\Delta$-$Y$ exchange} on $A$. For a matroid $M$ with an independent cosegment $A$, a \textit{generalized $Y$-$\Delta$ exchange on $A$}, denoted by $\nabla_A(M)$, is defined to be the matroid $(\Delta_A(M^{*}))^{*}$. 

We use the following results on representability and the minor operations.

\begin{lemma}
\label{3.7}
\cite[Lemma 3.7]{oxley2000generalized}
Let $\mathbb{P}$ be a partial field. Then $M$ is $\mathbb{P}$-representable if and only if $\Delta_A(M)$ is $\mathbb{P}$-representable.
\end{lemma}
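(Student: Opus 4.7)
The plan is to prove both implications by constructing explicit representations, using the fact that $\Delta_A(M)$ is built from $M$ via a generalized parallel connection with the ``universal'' matroid $\Theta_k$ followed by a deletion.

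For the forward direction, I would proceed in three steps. First, I would establish that $\Theta_k$ is $\mathbb{P}$-representable for every partial field $\mathbb{P}$. This is a concrete computation: because $\Theta_k$ has a representation by a matrix with entries in $\{0, 1, -1\}$ (or more generally, one can write down an explicit matrix whose subdeterminants are in $\{0, \pm 1\}$), it is representable over every partial field that contains $-1$, which holds by definition. Second, assuming $M$ is $\mathbb{P}$-representable, I would build a $\mathbb{P}$-representation of the generalized parallel connection $P_A(\Theta_k, M)$. Since $A$ is a modular segment in $\Theta_k$, one can scale $\mathbb{P}$-representations of $\Theta_k$ and $M$ so that the columns indexed by $A$ agree on their common rows, and then glue the matrices along $A$; the key subdeterminant computation shows that all square submatrices of the glued matrix have determinants in $G \cup \{0\}$, using that flats of $P_A(\Theta_k, M)$ restrict to flats on each factor. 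Third, since deletion obviously preserves $\mathbb{P}$-representability (one simply drops the corresponding columns), $\Delta_A(M) = P_A(\Theta_k, M) \del A$, with the natural relabeling of $E(\Theta_k) - A$ by $A$, is $\mathbb{P}$-representable.

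For the backward direction, I would exploit the fact that the generalized $\Delta$-$Y$ and $Y$-$\Delta$ exchanges are mutually inverse up to relabeling: $\nabla_A(\Delta_A(M)) = M$. Assuming $\Delta_A(M)$ is $\mathbb{P}$-representable, the definition $\nabla_A(N) := (\Delta_A(N^*))^*$ together with the fact that $\mathbb{P}$-representability is preserved under duality (a $\mathbb{P}$-representation of $N$ gives one of $N^*$ by the usual $[I \mid A] \mapsto [-A^T \mid I]$ correspondence) reduces the backward direction to the forward direction already proved. One needs to check here that $A$ is an independent cosegment of $\Delta_A(M)$, which follows from the structure of $\Theta_k$ and the coindependence hypothesis on $A$ in $M$.

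The main obstacle is the middle step: verifying that the natural matrix obtained by gluing $\mathbb{P}$-representations of $\Theta_k$ and $M$ along a common representation of the segment $A$ is actually a $\mathbb{P}$-matrix. Modularity of $A$ in $\Theta_k$ is what makes this possible, because it forces every flat of the parallel connection to decompose as a union of flats of the two factors, and this translates (via the standard rank--determinant dictionary) into the required constraints on subdeterminants. Once the parallel connection step is in hand, the rest of the argument is bookkeeping: the deletion is automatic, the involution identity $\nabla_A \circ \Delta_A = \mathrm{id}$ handles the converse via duality, and the hypothesis that $-1 \in G$ ensures that both $\Theta_k$ is $\mathbb{P}$-representable and that duality stays within $\mathbb{P}$.
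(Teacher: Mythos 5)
Your overall strategy is the right one and essentially matches the argument in \cite{oxley2000generalized}: realize $\Delta_A(M)$ as $P_A(\Theta_k,M)\del A$, show that the generalized parallel connection along the modular flat $A$ of $\Theta_k$ preserves $\mathbb{P}$-representability by gluing two $\mathbb{P}$-matrices that agree on the columns of $A$, handle deletion trivially, and obtain the converse via duality and the identity $\nabla_A(\Delta_A(M))=M$. However, the first step contains a genuine error. You assert that $\Theta_k$ is $\mathbb{P}$-representable over \emph{every} partial field because it has a totally unimodular representation, i.e.\ is regular. This is false for $k\geq 4$: by construction $\Theta_k$ has the $k$-element segment $A$ as a restriction, so $\Theta_k$ has a $U_{2,4}$-minor, and hence is not regular and is not representable over every partial field. (Concretely, $\Theta_4$ is not representable over the regular partial field $(\mathbb{Z},\{1,-1\})$, since no $\alpha\in\{1,-1\}\setminus\{1\}$ has $1-\alpha\in\{1,-1\}$; your claim is correct only when $k=3$, where $\Theta_3\cong M(K_4)$.)

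The fix is to prove the weaker and correct fact that $\Theta_k$ is $\mathbb{P}$-representable whenever $U_{2,k}$ is, and then observe that this suffices: since $A$ is a $k$-element segment of the $\mathbb{P}$-representable matroid $M$, the restriction $M|A\cong U_{2,k}$ is $\mathbb{P}$-representable, and hence so is $\Theta_k$. To show $\Theta_k$ is $\mathbb{P}$-representable from a $\mathbb{P}$-representation of $U_{2,k}$, one writes the $a_j$ in terms of the $2\times 2$ subdeterminants of the $2\times k$ matrix representing the line: if the line is given by columns $(x_j,y_j)^{T}$, then taking $b_i=e_i$ and giving $a_j$ the $i$-th coordinate $x_j y_i - x_i y_j$ places $a_j$ on a common line and in the hyperplane spanned by $B\setminus\{b_j\}$, and one checks that all subdeterminants of the resulting $k\times 2k$ matrix lie in $G\cup\{0\}$. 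Without this replacement, your argument silently fails for every segment of size at least four, which is most of the range in which the lemma is used in this paper.
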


\begin{lemma}
\label{2.13}
\cite[Lemma 2.13]{oxley2000generalized}
Suppose that $\Delta_A(M)$ is defined. If $x\in A$ and $|A|\geq 3$, then $\Delta_{A-x}(M\del x)$ is also defined, and $\Delta_A(M)/x=\Delta_{A-x}(M\del x)$.
\end{lemma}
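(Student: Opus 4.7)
The plan is to verify the two assertions in turn; the second reduces to a matroid identity involving $\Theta_k$ and the generalized parallel connection.

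First, to check that $\Delta_{A-x}(M\del x)$ is defined, I need $A-x$ to be a coindependent segment of $M\del x$. Segments are closed under taking subsets, so $A-x$ is a segment. For coindependence, $x$ lies in a triangle of $M$ (as part of a segment of size at least~$3$), so $x$ is not a coloop of $M$, equivalently, not a loop of $M^*$; hence the independence of $A$ in $M^*$ yields independence of $A - x$ in $M^*/x = (M\del x)^*$.

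For the equality, I would unwind the definition $\Delta_A(M) = P_A(\Theta_k, M)\del A$ (with the relabeling $b_i \leftrightarrow a_i$). Under the relabeling, the element $x = a_j \in A$ of $\Delta_A(M)$ corresponds to $b_j \in E(\Theta_k)-A$, and since $\{b_j\}$ and $A$ are disjoint, contraction and deletion commute:
\[
\Delta_A(M)/x \;=\; \bigl(P_A(\Theta_k, M)/b_j\bigr)\del A.
\]
Similarly, $\Delta_{A-x}(M\del x) = P_{A-a_j}(\Theta_{k-1}, M\del a_j)\del(A-a_j)$ up to the analogous relabeling. Thus it suffices to prove
\[
\bigl(P_A(\Theta_k, M)/b_j\bigr)\del A \;=\; P_{A-a_j}(\Theta_{k-1}, M\del a_j)\del(A-a_j).
\]

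I would establish this via two facts. The first is that the generalized parallel connection commutes with deletion and contraction of elements outside the common flat, so that the left-hand side can be rewritten as a parallel connection of $\Theta_k/b_j\del a_j$ with $M\del a_j$ along $A-a_j$. The second is that $\Theta_k/b_j\del a_j$, on ground set $(A-a_j)\cup(B-b_j)$, realizes $\Theta_{k-1}$: geometrically, contracting $b_j$ in $\Theta_k \subseteq PG(k-1,\mathbb{R})$ is projection through $b_j$, and $B-b_j$ projects to a basis of $PG(k-2,\mathbb{R})$ relative to which the projected line $L$ is freely placed, its intersections with the projected hyperplanes being precisely the projected points $a_i$ for $i \neq j$. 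The main obstacle will be rigorously establishing that the contraction-deletion steps preserve the modular-flat structure needed to apply the parallel-connection identity, since modularity of $A$ in $\Theta_k$ does not automatically pass to $\Theta_k/b_j$. As an alternative, one could verify the equality via representations by invoking Lemma~\ref{3.7}: track a common $\mathbb{U}_2$-matrix through the matrix pivoting that implements $\Delta_A$, and check by a short calculation that contracting the column indexed by $x$ after the pivot matches first deleting that column and then pivoting on $A-x$.
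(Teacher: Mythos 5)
The paper states this lemma with a citation to Oxley, Semple, and Whittle~\cite{oxley2000generalized} and does not prove it, so there is no in-paper argument to compare against; I evaluate the sketch on its own terms.

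Your skeleton is right, and the key geometric observation --- that $\Theta_k/b_j\del a_j$ realizes $\Theta_{k-1}$ on $(A-a_j)\cup(B-b_j)$ --- is correct and is the crux. The coindependence check is more involved than necessary (if $A$ is independent in $M^*$ and $x\in A$, then $A-x$ is automatically independent in $M^*/x$), but fine. The genuine gap is in the parallel-connection manipulation, where you compress at least two non-automatic steps into one sentence and only flag part of the difficulty. To pass from $P_A(\Theta_k,M)/b_j$ to a parallel connection of smaller pieces you need, first, $P_A(\Theta_k,M)/b_j = P_A(\Theta_k/b_j,M)$; here $b_j\in E(\Theta_k)-A$ lies on the side of the connection carrying the modular flat, and the commutation identities that are routine for generalized parallel connection concern deletion and contraction of elements on the \emph{other} side, so this step needs its own proof (including showing that $A$ is still a modular flat of $\Theta_k/b_j$, the issue you do flag). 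Second, you must then peel $a_j$ off the shared flat, i.e.\ show $P_A(\Theta_k/b_j,M)\del a_j = P_{A-a_j}(\Theta_k/b_j\del a_j,\ M\del a_j)$; deleting an element \emph{inside} the common flat is another non-routine step. These two manipulations carry the real technical content, and neither is merely a modularity bookkeeping check. Finally, the fallback via Lemma~\ref{3.7} and a $\mathbb{U}_2$-matrix pivot could only confirm the identity for representable matroids, whereas the lemma is a combinatorial statement about every $M$ for which $\Delta_A(M)$ is defined, so that route cannot substitute for the direct argument.
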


\begin{lemma}
\label{2.16}
\cite[Lemma 2.16]{oxley2000generalized}
Suppose that $\Delta_A(M)$ is defined.
\begin{enumerate}
\item[(i)] If $x\in E(M)-A$ and $A$ is coindependent in $M\del x$, then $\Delta_A(M\del x)$ is defined and $\Delta_A(M)\del x=\Delta_A(M\del x)$.
\item[(ii)] If $x\in E(M)-\cl(A)$, then $\Delta_A(M/x)$ is defined and $\Delta_A(M)/x=\Delta_A(M/x)$.
\end{enumerate}
\end{lemma}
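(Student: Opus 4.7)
The plan is to work directly from the definition $\Delta_A(M) = P_A(\Theta_k, M)\del A$ (with the canonical relabeling of $E(\Theta_k)-A$ by $A$) and reduce both parts of the lemma to the corresponding commutativity properties of the generalized parallel connection. The two standard facts I would invoke (both proved via modularity, e.g.\ in Oxley's Chapter 11) are, for $A$ a modular flat of $M_1$ and restriction-compatible with $M_2$:
\begin{enumerate}
\item[(a)] If $x \in E(M_2)-A$, then $P_A(M_1, M_2\del x) = P_A(M_1, M_2)\del x$.
\item[(b)] If $x \in E(M_2)-\cl_{M_2}(A)$, then $P_A(M_1, M_2/x) = P_A(M_1, M_2)/x$.
\end{enumerate}

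Specializing to $M_1 = \Theta_k$ and $M_2 = M$, note that since $x\notin A$ in both parts, deletion of $A$ commutes with the deletion (or contraction) of $x$. For part (i), the hypothesis that $A$ is coindependent in $M\del x$ ensures that $\Delta_A(M\del x)$ is defined, and we compute
\[
\Delta_A(M)\del x \;=\; P_A(\Theta_k, M)\del A\del x \;=\; P_A(\Theta_k, M)\del x\del A \;=\; P_A(\Theta_k, M\del x)\del A \;=\; \Delta_A(M\del x),
\]
using (a) for the third equality. The relabeling of $E(\Theta_k)-A$ by $A$ is unaffected since $x \notin A$.

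For part (ii), I would first verify that $A$ is coindependent in $M/x$, so $\Delta_A(M/x)$ is defined: cocircuits of $M/x$ are precisely the cocircuits of $M$ that avoid $x$, so if $A$ contains no cocircuit of $M$ it contains no cocircuit of $M/x$ either. Then, using that $x\in E(M)-\cl(A)$ implies $x\notin A$,
\[
\Delta_A(M)/x \;=\; P_A(\Theta_k, M)\del A / x \;=\; P_A(\Theta_k, M)/x\del A \;=\; P_A(\Theta_k, M/x)\del A \;=\; \Delta_A(M/x),
\]
where (b) gives the third equality and the hypothesis $x \notin \cl_M(A)$ is exactly what (b) requires.

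The only nonroutine step is invoking (a) and (b); everything else is bookkeeping about disjoint deletion/contraction and the relabeling. The main obstacle is therefore justifying those two commutativity properties of $P_A$, but both are classical and already in the literature on the generalized parallel connection, so the proof is essentially a one-line reduction to them once one is comfortable with the $P_A$-based definition of $\Delta_A$.
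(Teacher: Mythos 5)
The paper does not prove this lemma; it is quoted verbatim from \cite[Lemma 2.16]{oxley2000generalized} and used as a known result. Your proposal supplies a correct proof whose strategy is exactly the one behind the original: unwind the definition $\Delta_A(M) = P_A(\Theta_k,M)\del A$ (up to the canonical relabeling of $E(\Theta_k)-A$) and reduce to the classical commutation of the generalized parallel connection with deletion and contraction of elements outside the common flat, namely $P_A(M_1,M_2)\del x = P_A(M_1,M_2\del x)$ for $x\in E(M_2)-A$ and $P_A(M_1,M_2)/x = P_A(M_1,M_2/x)$ for $x\in E(M_2)-\cl_{M_2}(A)$ (both standard, cf.\ the discussion of $P_A$ in \cite[Section 11.4]{oxley2011matroid}). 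You correctly identify why the two hypotheses are placed where they are: in (i), the coindependence of $A$ in $M\del x$ is exactly what makes $\Delta_A(M\del x)$ defined, and in (ii), $x\notin\cl_M(A)$ is what makes $P_A(\Theta_k, M/x)$ legitimate and is also noted to pass down coindependence of $A$ to $M/x$ via the cocircuit description of contraction. The remaining steps — commuting $\del x$ (resp.\ $/x$) with $\del A$ because $x\notin A$, and observing that the relabeling of $E(\Theta_k)-A$ by $A$ does not touch $x$ — are routine and correctly handled. No gap.
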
  

\begin{lemma}
\label{2.15}
\cite[Lemma 2.15]{oxley2000generalized}
Suppose that  $x\in \cl(A)-A$ and let $a$ be an arbitrary element of the $k$-element segment $A$. Then $\Delta_A(M)/x$ equals the $2$-sum, with basepoint $p$, of a copy of $U_{k-1,k+1}$ with groundset $A\cup p$ and the matroid obtained from $M/x\del (A-a)$ by relabeling $a$ as $p$.
\end{lemma}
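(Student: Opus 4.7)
The plan is to realize $\Delta_A(M)/x$ as the parallel connection of $U_{k-1,k+1}$ with $M/x\del(A-a)$ at the common element $a$, and then delete this basepoint to obtain the required 2-sum. Since $x \in E(M) - A$, the deletion of $A$ and the contraction of $x$ commute in the definition $\Delta_A(M) = P_A(\Theta_k, M)\del A$, so writing $P := P_A(\Theta_k, M)$, we have
\[
\Delta_A(M)/x \;=\; (P/x)\del A,
\]
under the convention that $B$ is relabeled as $A$ throughout. The central structural claim is that, for any $a \in A$,
\[
P/x\del(A-a) \;=\; P_a\bigl(U_{k-1,k+1},\; M/x\del(A-a)\bigr),
\]
with $U_{k-1,k+1}$ realized on $B \cup \{a\}$. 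Deleting $a$ from this parallel connection then yields the 2-sum with basepoint $a$, which becomes the matroid in the statement after $a$ is renamed $p$.

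The central claim reduces to two restriction computations followed by a gluing verification. First, since $P|E(M) = M$, we immediately obtain $(P/x)|((E(M)-A-x)\cup\{a\}) = M/x\del(A-a)$, providing the right-hand component. Second, and this is the main work, I claim $(P/x)|(B\cup\{a\}) \cong U_{k-1,k+1}$. The rank is $r_{P/x}(B\cup\{a\}) = r_P(B \cup A \cup \{x\}) - 1 = k-1$, since $B$ spans $A \cup \{x\}$ in $P$. For uniformity, every $(k-1)$-subset must be independent. For subsets of the form $B - b_i$: because $L := \cl(A)$ is freely placed in $PG(k-1,\mathbb{R})$ with $\cl_{\Theta_k}(B-b_i)\cap L = \{a_i\} \neq x$, we have $x \notin \cl_P(B - b_i)$, so $r_{P/x}(B-b_i) = k-1$. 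For subsets of the form $(B - \{b_i,b_j\})\cup\{a\}$: free placement of $L$ gives $\cl_{\Theta_k}(B - \{b_i,b_j\})\cap L = \emptyset$, so by modularity $r_P(\{a,x\} \cup (B - \{b_i,b_j\})) = 2 + (k-2) = k$, and hence $r_{P/x}((B - \{b_i,b_j\})\cup\{a\}) = k-1$.

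With both restrictions identified and intersecting only in $\{a\}$---a non-loop, non-coloop element of each---the remaining step is to verify that $P/x\del(A-a)$ is actually the parallel connection of these two restrictions at $a$. This follows by matching the circuits of $P/x\del(A-a)$ with those predicted by the parallel connection rule, using the circuit structure of a generalized parallel connection together with Lemma \ref{2sumcircuits} applied to the parallel-connection-minus-basepoint. The main obstacle is establishing the uniform matroid structure above, which exploits the free placement of $L$ in an essential way; the remainder of the argument is routine bookkeeping with rank and circuit formulas for generalized parallel connections.
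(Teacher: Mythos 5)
The paper does not prove this lemma; it cites it verbatim from Oxley, Semple, and Vertigan's paper on the generalized $\Delta$-$Y$ exchange. There is therefore no ``paper's own proof'' to compare against, and I can only assess your argument on its own terms.

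Your overall framework is sound: write $\Delta_A(M)/x = (P/x)\del A$ with $P = P_A(\Theta_k,M)$, realize $(P/x)\del(A-a)$ as a parallel connection along $a$, and delete $a$ to get the $2$-sum. The two restriction computations are essentially correct --- $(P/x)|E(M/x\del(A-a)) = M/x\del(A-a)$ follows from $P|E(M)=M$, and the rank-$(k-1)$ uniformity of $(P/x)|(B\cup a)$ follows from the free placement of $L$ and the modularity of $A$ in $\Theta_k$ (with the small caveat that your claim $x\notin\cl_P(B-b_i)$ tacitly uses $\cl_P(B-b_i)\cap E(M)=\cl_M(a_i)=\{a_i\}$, which needs a simplicity-type hypothesis on $M|\cl_M(A)$; in the fragile setting this is standing, but you should say it).

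The genuine gap is in the last paragraph. You verify that $(P/x)\del(A-a)$ has the correct restrictions to $B\cup\{a\}$ and to $E(M/x\del(A-a))$, meeting in the non-separating element $a$, and then assert that the matroid ``is actually the parallel connection'' and that this follows by ``matching circuits'' as ``routine bookkeeping.'' Having two prescribed restrictions that agree on $\{a\}$ and whose ranks sum correctly does not determine a matroid: amalgams along a common restriction are in general not unique, and the parallel connection is only one of them. The content of the lemma is precisely that $(P/x)\del(A-a)$ is the \emph{free} such amalgam, and this is where you must use the generalized-parallel-connection structure of $P$ in earnest --- e.g.\ by showing directly that every circuit of $(P/x)\del(A-a)$ meeting both $B$ and $E(M)-\cl_M(A)$ decomposes through $a$ as in the parallel connection circuit description, using the flat/rank formula for $P_A(\Theta_k,M)$ together with $x\in\cl_P(A)$, or by appealing to a uniqueness criterion (such as: $N$ is the parallel connection of $N|E_1$ and $N|E_2$ at $a$ iff $a$ is not a loop or coloop and $(E_1-a,E_2-a)$ is a $2$-separation of $N\del a$ with $a\in\cl_N(E_1-a)\cap\cl_N(E_2-a)$). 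As written, the central claim is named but not established, and Lemma \ref{2sumcircuits}, which describes the circuits of a $2$-sum, cannot by itself certify that an arbitrary matroid \emph{is} a $2$-sum.
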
 

The next result implies that every $\{U_{2,5}, U_{3,5}\}$-fragile matroid is $3$-connected up to series and parallel classes. 

\begin{lemma}
\label{fcon}
\cite[Proposition 4.3]{mayhew2010stability}
Let $M$ be a matroid with a $2$-separation $(A,B)$, and let $N$ be a $3$-connected minor of $M$. Assume $|E(N)\cap A|\geq |E(N)\cap B|$. Then $|E(N)\cap B|\leq 1$. Moreover, unless $B$ is a parallel or series class, there is an element $x\in B$ such that both $M\del x$ and $M/x$ have a minor isomorphic to $N$. 
\end{lemma}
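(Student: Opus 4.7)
The plan is to handle the two assertions of the lemma separately. The first follows from the monotonicity of the matroid connectivity function under taking minors: since $(A,B)$ is a $2$-separation, $\lambda_M(A) \leq 1$, and for any partition $(A',B')$ of $E(N)$ with $A' \subseteq A$ and $B' \subseteq B$, one has $\lambda_N(A') \leq \lambda_M(A) \leq 1$. Because $N$ is $3$-connected, every such partition has one side of size at most $1$; the hypothesis $|E(N)\cap A| \geq |E(N)\cap B|$ then forces $|E(N)\cap B| \leq 1$.

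For the second assertion, after reducing to the case that $M$ is connected, Lemma \ref{2sum} gives a $2$-sum decomposition $M = M_1 \oplus_2 M_2$ with basepoint $p$, $E(M_1) = A \cup \{p\}$, and $E(M_2) = B \cup \{p\}$. Note that $B$ being neither a parallel nor a series class forces $|B| \geq 3$, since for $|B|=2$ the only possibilities for $M_2$ with $p$ not a loop or coloop are $U_{1,3}$ (giving $B$ parallel) and $U_{2,3}$ (giving $B$ series). Since $N$ is $3$-connected, Lemma \ref{2summinor} places $N$ as a minor of $M_1$ or $M_2$; the latter would force $E(N) \subseteq B$ and, combined with the hypothesis, degenerate to $|E(N)| = 0$. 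Thus $N$ is a minor of $M_1$, and since $p \notin E(N)$, the reduction from $M_1$ to $N$ either deletes or contracts $p$, making $N$ a minor of $M_1\del p$ or of $M_1/p$.

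The crux is locating $x \in B$ for which $\{p,x\}$ is neither a circuit nor a cocircuit of $M_2$. Let $P = \{x \in B : \{p,x\} \text{ is a circuit of } M_2\}$ and $S = \{x \in B : \{p,x\} \text{ is a cocircuit of } M_2\}$. Orthogonality rules out distinct elements $x_1 \in P$ and $x_2 \in S$, since the circuit $\{p,x_1\}$ and the cocircuit $\{p,x_2\}$ would meet in exactly the element $p$; the remaining possibility $x_1=x_2$ makes $\{p,x_1\}$ a separator of $M_2$, which splits the $2$-sum into a direct sum and contradicts connectedness of $M$ (since $|B| \geq 2$). Hence at most one of $P, S$ is nonempty. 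If $P=B$, then $M_2 = U_{1,|B|+1}$ and $B$ is a parallel class of $M$; dually, $S=B$ makes $B$ a series class. Since $B$ is assumed to be neither, $P \cup S \subsetneq B$, and any $x \in B - (P \cup S)$ is the desired element.

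For such an $x$, both $M\del x = M_1 \oplus_2 (M_2\del x)$ and $M/x = M_1 \oplus_2 (M_2/x)$ are valid $2$-sums, since $p$ remains neither a loop nor a coloop of the relevant factor. The standard $2$-sum identities, derivable from Lemma \ref{2sumcircuits}, then give $(M\del x)\del(B - x) = (M/x)\del(B - x) = M_1\del p$ and $(M\del x)/(B - x) = (M/x)/(B - x) = M_1/p$. Since $N$ is a minor of $M_1\del p$ or of $M_1/p$, it follows that $N$ is a minor of both $M\del x$ and $M/x$. The main obstacle is the careful case analysis around the $2$-sum at $p$ and the verification of these identities; the choice of $x$ outside $P \cup S$ is precisely what keeps the $2$-sums non-degenerate and lets $N$ pass through to both $M\del x$ and $M/x$.
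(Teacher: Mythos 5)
The overall strategy — decompose $M$ as a $2$-sum $M_1 \oplus_2 M_2$ with basepoint $p$, locate $x\in B$ with $\{p,x\}$ neither a circuit nor a cocircuit of $M_2$, and push $N$ through both $M\del x$ and $M/x$ via the $2$-sum identities — is the standard route, and your treatment of $P$, $S$, orthogonality and the identity-checking is sound. However, the step where you place $N$ inside $M_1\del p$ or $M_1/p$ has a genuine gap. You write that if $N$ were a minor of $M_2$ then ``$E(N)\subseteq B$,'' and later that ``since $p\notin E(N)$'' the reduction from $M_1$ to $N$ must delete or contract $p$. Both of these conflate the specific minor $N\subseteq E(M)$ with the \emph{isomorphic} copy produced by Lemma~\ref{2summinor}: that lemma gives a minor of $M_1$ or $M_2$ that is merely isomorphic to $N$, and its ground set may well contain $p$. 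In particular, when $E(N)\cap B=\{b\}$ the natural isomorphism sends $b$ to $p$, so $N$ is a minor of $M_1$ that does \emph{not} factor through $M_1\del p$ or $M_1/p$, and your final chain of containments does not apply to it.

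The gap is repairable within your framework, but you need to argue at the level of $M_1$ itself rather than only $M_1\del p$ and $M_1/p$. Concretely: with $b$ the unique element of $E(N)\cap B$ (or any element of $B$ if that intersection is empty), write $M_0$ for $M$ with $B-b$ removed so that $N$ is a minor of $M_0$; since $b$ is neither a loop nor a coloop of $M_0$ (otherwise it would be one in the $3$-connected minor $N$), a rank computation shows $M_0\cong M_1$ with $b\leftrightarrow p$. For your chosen $x\in B-(P\cup S)$, one then shows that $M_1$ itself (not merely $M_1\del p$ and $M_1/p$) is isomorphic to a minor of both $M\del x=M_1\oplus_2(M_2\del x)$ and $M/x=M_1\oplus_2(M_2/x)$: since $p$ is not a loop or coloop of $M_2\del x$ or $M_2/x$, it lies on a circuit with some $b'\in B-x$, and contracting and deleting to leave only the parallel pair $\{p,b'\}$ in the second factor recovers $M_1$. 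With $M_1$ a minor of both, so is $N$ in every case, and the proof is complete.
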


The following is an easy consequence of the property that $\{U_{2,5}, U_{3,5}\}$-fragile matroids are $3$-connected up to parallel and series classes. 

\begin{lemma}
\label{incotri}
Let $M$ be a $\{U_{2,5}, U_{3,5}\}$-fragile matroid with at least $8$ elements. If $S$ is a triangle or $4$-element segment of $M$ such that $E(M)-S$ is not a series or parallel class of $M$, then $S$ is coindependent in $M$. If $C$ is a triad or $4$-element cosegment of $M$ such that $E(M)-C$ is not a series or parallel class of $M$, then $C$ is independent. 
\end{lemma}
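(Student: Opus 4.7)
The plan is to deduce the first statement directly from the stated property that strictly $\{U_{2,5},U_{3,5}\}$-fragile matroids are $3$-connected up to series and parallel classes, and then obtain the second statement by duality. Indeed, the class of strictly $\{U_{2,5},U_{3,5}\}$-fragile matroids is closed under duality since $U_{2,5}^{*}=U_{3,5}$, and a series class of $M$ is by definition a parallel class of $M^{*}$; so applying the first statement to $M^{*}$ (interpreting the ``$E(M)-S$'' in the second clause as $E(M)-C$) immediately yields the second.

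For the first statement, suppose $S$ is a triangle or $4$-element segment with $E(M)-S$ not a series or parallel class, and assume for a contradiction that $S$ is not coindependent. Since $r(S)=2$, we have $\lambda(S)=r(S)+r(E(M)-S)-r(M)=2+r(E(M)-S)-r(M)$, and the submodular bound $r(E(M)-S)\geq r(M)-r(S)=r(M)-2$ restricts $r(E(M)-S)$ to $\{r(M)-2,\,r(M)-1\}$. The failure of coindependence is equivalent to $r^{*}(S)\leq|S|-1$, i.e.\ $r(E(M)-S)\leq r(M)-1$. If $r(E(M)-S)=r(M)-2$, then $\lambda(S)=0$, so $S$ is a union of connected components of $M$; but the stated property forces $M$ to be connected, and $|S|<|E(M)|$ since $|E(M)|\geq 8$, a contradiction. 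Hence $r(E(M)-S)=r(M)-1$ and $\lambda(S)=1$.

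Since $|E(M)|\geq 8$ forces both $|S|\geq 2$ and $|E(M)-S|\geq 2$, the pair $(S,\,E(M)-S)$ is a genuine $2$-separation. The stated property now forces one of $S$ or $E(M)-S$ to be a series or parallel class, and by hypothesis it must be $S$. However, $r(S)=2\neq 1$ rules out $S$ being a parallel class, while $r^{*}(S)=|S|+r(E(M)-S)-r(M)=|S|-1\geq 2$ rules out $S$ being a series class. This contradiction completes the proof. The only subtlety is tracking the submodular inequality to rule out the component case via the bound $|E(M)|\geq 8$; the rest is bookkeeping, so no step presents a genuine obstacle.
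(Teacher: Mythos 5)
Your proof is correct and supplies the details that the paper explicitly elides: the paper just asserts Lemma~\ref{incotri} ``is an easy consequence of the property that strictly $\{U_{2,5},U_{3,5}\}$-fragile matroids are $3$-connected up to parallel and series classes'' and gives no argument, whereas you carry out exactly the intended deduction. The rank bookkeeping is sound: $r(S)=2$ because a triangle or $4$-element segment contains no loops or parallel pairs (otherwise some $3$-subset would fail to be a triangle); submodularity and the failure of coindependence pin $r(E(M)-S)$ to $\{r(M)-2,\,r(M)-1\}$; $\lambda(S)=0$ contradicts connectedness (which is built into the paper's definition of $3$-connected up to series and parallel classes); and in the $\lambda(S)=1$ case, $|S|\geq 3$ and $|E(M)-S|\geq 4$ (from $|E(M)|\geq 8$) give a genuine $2$-separation, after which $r(S)=2$ and $r^{*}(S)=|S|-1\geq 2$ rule out $S$ being a parallel or series class, giving the desired contradiction. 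Your duality reduction for the cosegment statement is also fine, and you correctly note the paper's typo (``$E(M)-S$'' should read ``$E(M)-C$'' in the second clause).
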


Let $M$ be a $\{U_{2,5},U_{3,5}\}$-fragile matroid.  A segment $S$ of $M$ is \textit{allowable} if $S$ is coindependent and some element of $S$ is nondeletable. A cosegment $C$ of $M$ is \textit{allowable} if the segment $C$ of $M^{*}$ is allowable. In \cite{clark2016fragile}, it was shown that we can obtain a new $\{U_{2,5}, U_{3,5}\}$-fragile $\mathbb{U}_2$-representable matroid from an old $\{U_{2,5}, U_{3,5}\}$-fragile $\mathbb{U}_2$-representable matroid by performing a generalized $\Delta$-$Y$ exchange on an allowable segment. We will prove an analogous result for $\{U_{2,5}, U_{3,5}\}$-fragile $\gf(4)$-representable matroids with no $\{P_8^{-},F_7^{=}, (F_7^{=})^{*}\}$-minor. 

Let $\mathcal{U}$ be the class of $\gf(4)$-representable matroids with no $\{U_{2,5}, U_{3,5}\}$-minor. The class of \textit{sixth-root-of-unity} matroids is the class of matroids that are representable over both $\gf(3)$ and $\gf(4)$. Semple and Whittle \cite[Theorem 5.2]{semple1996representable} showed that $\mathcal{U}$ is the class of matroids that can be obtained by taking direct sums and $2$-sums of binary and sixth-root-of-unity matroids.

\begin{lemma}
\label{nou25oru35}
Let $M$ be a matroid in the class $\mathcal{U}$. If $M'$ is obtained from $M$ by performing a generalized $\Delta$-$Y$ exchange or a generalized $Y$-$\Delta$ exchange, then $M'\in \mathcal{U}$.
\end{lemma}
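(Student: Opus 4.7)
The plan has two parts. That $M' = \Delta_A(M)$ is $\gf(4)$-representable is immediate from Lemma \ref{3.7} applied with the partial field $\gf(4)$; the $Y$-$\Delta$ case then follows from the definition $\nabla_A(M) = (\Delta_A(M^*))^*$ together with the self-duality of $\gf(4)$-representability. So it remains to rule out $\{U_{2,5}, U_{3,5}\}$-minors in $M'$.

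For this I would use the Semple--Whittle theorem quoted in the text: $\mathcal{U}$ is the smallest class containing the binary and sixth-root-of-unity matroids that is closed under direct sums and $2$-sums. The argument goes by induction on $|E(M)|$. In the base case $M$ is $3$-connected, so the Semple--Whittle decomposition collapses and $M$ must itself be binary or sixth-root-of-unity. Lemma \ref{3.7} applied with the partial field $\gf(2)$ or the sixth-root-of-unity partial field then shows that $\Delta_A(M)$ is binary or sixth-root-of-unity respectively, and hence lies in $\mathcal{U}$.

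For the inductive step, if $M = M_1 \oplus M_2$ is disconnected, the rank-$2$ segment $A$ lies entirely in one summand (since every triangle of a direct sum is contained in a single component), say $A \subseteq E(M_1)$; both $M_1, M_2 \in \mathcal{U}$ as minors of $M$, by induction $\Delta_A(M_1) \in \mathcal{U}$, and closure of $\mathcal{U}$ under direct sums gives $\Delta_A(M) = \Delta_A(M_1) \oplus M_2 \in \mathcal{U}$. The case $M = M_1 \oplus_2 M_2$ with basepoint $p$ and $A \subseteq E(M_i) - p$ is handled analogously, using the identity $\Delta_A(M_1 \oplus_2 M_2) = \Delta_A(M_1) \oplus_2 M_2$, which follows from the associativity of generalized parallel connections together with the fact that $A$ is disjoint from $\{p\}$ and lies on one side of the $2$-sum.

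The main obstacle is the case in which $A$ meets both $E(M_1) - p$ and $E(M_2) - p$. A short counting argument using Lemma \ref{2sumcircuits} (a triangle split across the $2$-sum must have the form $(\{a,p\} \cup \{p, x, y\}) - p$) shows that such a split segment can occur only when exactly one element $a_1 \in A$ lies on one side, say in $E(M_1) - p$, with $a_1$ parallel to $p$ in $M_1$, and $A' := (A - a_1) \cup \{p\}$ is a segment of $M_2$ of the same size as $A$. One then verifies that, after relabeling the basepoint of the $2$-sum by $a_1$, the matroid $\Delta_A(M)$ coincides with $M_1 \oplus_2 \Delta_{A'}(M_2)$; by induction $\Delta_{A'}(M_2) \in \mathcal{U}$, so $\Delta_A(M) \in \mathcal{U}$ by closure of $\mathcal{U}$ under $2$-sums. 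The $Y$-$\Delta$ case then follows at once from the duality identity, the self-duality of $\mathcal{U}$ (since $U_{2,5}^* = U_{3,5}$), and the $\Delta$-$Y$ case just proved.
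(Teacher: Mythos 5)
Your argument follows the paper's strategy exactly: take a minimum-sized counterexample, dispose of the $3$-connected case via the Semple--Whittle theorem and Lemma~\ref{3.7} applied over the appropriate partial field, and in the non-$3$-connected case decompose $M$ as a direct sum or $2$-sum and push $\Delta_A$ through the sum. Where the two proofs diverge is in the handling of a segment $A$ that straddles the $2$-separation. The paper settles this with the unsupported assertion ``we may assume that $M_1$ and $M_2$ have been chosen so that the segment $A$ of $M$ is contained in $E(M_1)$.'' You correctly observe, via Lemma~\ref{2sumcircuits}, that a split coindependent segment with $|A|\ge 3$ must meet one side, say $E(M_1)-p$, in a single element $a_1$, which is then forced to be parallel to the basepoint $p$ in $M_1$; this is exactly the analysis needed to make the paper's assertion honest, and it is a genuine gap you have noticed. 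However, the way you cash in that observation is itself unproved: the claimed identity $\Delta_A(M)=M_1\oplus_2\Delta_{A'}(M_2)$ ``after relabeling the basepoint of the $2$-sum by $a_1$'' is asserted, not verified, and the relabeling remark is confusing since both sides already have ground set $E(M)$. A cleaner route is to use $a_1\parallel p$ to slide $a_1$ across the $2$-sum: write $M=(M_1\del a_1)\oplus_2 M_2''$ where $M_2''$ is $M_2$ with $a_1$ added in parallel to $p$; then $A\subseteq E(M_2'')$ and $M_2''\in\mathcal{U}$ (parallel extension is a $2$-sum with $U_{1,3}$, which is binary), reducing to the one-sided case. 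Even this requires $|E(M_2'')|<|E(M)|$ for the induction, which fails precisely when $|E(M_1)|=3$, forcing $M_1\cong U_{1,3}$ and giving $M$ a parallel pair meeting $A$ --- an edge case neither the paper nor your proposal treats. In short: same overall route, a sharper eye for the split-segment subtlety, but a replacement step that is itself left unjustified.
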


\begin{proof}
Suppose that there exists a matroid $M\in \mathcal{U}$ with a coindependent segment $A$ such that $\Delta_A(M) \not\in \mathcal{U}$. Among all counterexamples, suppose that $M$ has been chosen so that $|E(M)|$ is as small as possible. Suppose $M$ is $3$-connected. Since any $3$-connected member of $\mathcal{U}$ is either a binary or sixth-root-of-unity matroid, this also holds for $\Delta_A(M)$ by Lemma \ref{3.7}. Hence $\Delta_A(M)\in \mathcal{U}$, contradicting the assumption that $M$ is a counterexample. Therefore $M$ is not $3$-connected. 

Now either $M=M_1\oplus M_2$ or $M=M_1\oplus_2 M_2$ for some $M_1, M_2\in \mathcal{U}$ with $|E(M_i)|<|E(M)|$ for each $i\in \{1,2\}$. Moreover, we may assume that $M_1$ and $M_2$ have been chosen so that the segment $A$ of $M$ is contained in $E(M_1)$. Now either $\Delta_A(M)=\Delta_A(M_1)\oplus M_2$ or $\Delta_A(M)=\Delta_A(M_1)\oplus_2 M_2$. Since $|E(M_1)|<|E(M)|$, it follows that $\Delta_A(M_1)\in \mathcal{U}$. Hence $\Delta_A(M)\in \mathcal{U}$. Since $\mathcal{U}$ is closed under duality, the result follows.
\end{proof}

\begin{lemma}
\label{amovegf4} 
Let $M$ be a $\{U_{2,5}, U_{3,5}\}$-fragile $\gf(4)$-representable matroid with no $\{P_8^{-},F_7^{=},$ $(F_7^{=})^{*}\}$-minor. If $A$ is an allowable segment of $M$ with $|A| \in \{3,4\}$, then $\Delta_A(M)$ is a  $\{U_{2,5}, U_{3,5}\}$-fragile $\gf(4)$-representable matroid with no $\{P_8^{-},F_7^{=}, (F_7^{=})^{*}\}$-minor. Moreover, $A$ is an allowable cosegment of $\Delta_A(M)$. 
\end{lemma}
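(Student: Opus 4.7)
The plan is to establish the four conclusions of the lemma in turn, using the partial-field transfer Lemma \ref{3.7}, the minor identities for generalized $\Delta$-$Y$ exchange (Lemmas \ref{2.13}, \ref{2.16}, \ref{2.15}), and the closure of $\mathcal{U}$ under $\Delta$-$Y$ and $Y$-$\Delta$ exchange (Lemma \ref{nou25oru35}). The first conclusion, that $\Delta_A(M)$ is $\gf(4)$-representable, is immediate from Lemma \ref{3.7} applied with $\mathbb{P}=\gf(4)$.

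For $\{U_{2,5},U_{3,5}\}$-fragility of $\Delta_A(M)$, I would run a case analysis on $e\in E(\Delta_A(M))$, showing that at most one of $\Delta_A(M)\del e$ and $\Delta_A(M)/e$ has a $\{U_{2,5},U_{3,5}\}$-minor. When $e\in E(M)-\cl(A)$, Lemma \ref{2.16} gives $\Delta_A(M)\del e = \Delta_A(M\del e)$ and $\Delta_A(M)/e = \Delta_A(M/e)$, and fragility at $e$ in $M$ transfers via Lemma \ref{nou25oru35}. When $e\in A$ (a cosegment element of $\Delta_A(M)$), Lemma \ref{2.13} gives $\Delta_A(M)/e = \Delta_{A-e}(M\del e)$, which by Lemma \ref{nou25oru35} lies in $\mathcal{U}$ iff $M\del e$ does; the deletion side $\Delta_A(M)\del e$ is analyzed via dual considerations, using $\Delta_A(M)^{*}=\nabla_A(M^{*})$ together with the $Y$-$\Delta$ analogue of Lemma \ref{2.13}. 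When $e\in\cl_M(A)-A$, Lemma \ref{2.15} writes $\Delta_A(M)/e$ as a $2$-sum of $U_{k-1,k+1}$ with a matroid derived from $M/e\del(A-a)$, and Lemma \ref{2summinor} localizes any $\{U_{2,5},U_{3,5}\}$-minor of the sum to one of its summands.

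For the absence of a $\{P_8^{-},F_7^{=},(F_7^{=})^{*}\}$-minor in $\Delta_A(M)$, I route through $\mathbb{U}_2$-representability. By a confinement result from \cite{clark2016fragile}, the hypotheses on $M$ (being $\{U_{2,5},U_{3,5}\}$-fragile, $\gf(4)$-representable, and having no $\{P_8^{-},F_7^{=},(F_7^{=})^{*}\}$-minor) imply that $M$ is $\mathbb{U}_2$-representable; Lemma \ref{3.7} with $\mathbb{P}=\mathbb{U}_2$ then transfers $\mathbb{U}_2$-representability to $\Delta_A(M)$, and Lemma \ref{not2regular} yields the desired absence of bad minors. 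For the allowability of $A$ as a cosegment of $\Delta_A(M)$: the coindependent segment $A$ of $M$ becomes an independent cosegment of $\Delta_A(M)$ by construction of the generalized $\Delta$-$Y$ exchange, and since $A$ is allowable in $M$, some $a\in A$ is nondeletable in $M$; then $\Delta_A(M)/a = \Delta_{A-a}(M\del a)\in\mathcal{U}$ by Lemmas \ref{2.13} and \ref{nou25oru35}, so $a$ is noncontractible in $\Delta_A(M)$.

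The main obstacle is the case $e\in\cl_M(A)-A$ in the fragility analysis: when $|A|=4$, the $U_{3,5}$ summand of the $2$-sum given by Lemma \ref{2.15} already contributes a $\{U_{2,5},U_{3,5}\}$-minor on the contraction side, so one must show, using the allowability of $A$ and the fragility of $M$, that the deletion side $\Delta_A(M)\del e$ has no such minor (or that this case does not arise under the allowability hypothesis). The appeal to \cite{clark2016fragile} for $\mathbb{U}_2$-representability of $M$ is the second delicate ingredient.
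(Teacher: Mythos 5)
Your first conclusion, $\gf(4)$-representability via Lemma \ref{3.7}, matches the paper, and your plan for cosegment allowability is essentially correct. For fragility, the paper delegates entirely to the case analysis in \cite[Lemma 2.21]{clark2016fragile}, substituting Lemma \ref{nou25oru35} where that proof used closure of near-regular matroids under $\Delta$-$Y$; you sketch the same case split ($e\in E(M)-\cl(A)$, $e\in A$, $e\in\cl(A)-A$), and you correctly flag that the $e\in\cl_M(A)-A$ case with $|A|=4$ is the delicate one — this is precisely where the cited proof does real work, and your proposal leaves it open rather than resolving it, so that remains a genuine gap in your writeup (though it is patchable by following the reference).

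The bigger problem is your route to the no-$\{P_8^{-},F_7^{=},(F_7^{=})^{*}\}$-minor conclusion. You invoke a confinement result asserting that a $\{U_{2,5},U_{3,5}\}$-fragile $\gf(4)$-representable matroid with no $\{P_8^{-},F_7^{=},(F_7^{=})^{*}\}$-minor is $\mathbb{U}_2$-representable, then transfer $\mathbb{U}_2$-representability across $\Delta_A$ via Lemma \ref{3.7} and finish with Lemma \ref{not2regular}. But the paper never claims that implication as a known input — Lemma \ref{not2regular} only gives the containment in the \emph{other} direction — and establishing it is essentially the content of Theorem \ref{mainresultgf4}, which is proved \emph{using} Lemma \ref{amovegf4}. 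So your appeal is circular (or at best rests on a stronger confinement theorem than anything cited in the paper). The paper instead runs a direct minimum-sized counterexample argument: for $|E(M)|\leq 9$ it appeals to the computer generation in the Appendix (all such matroids turn out to be $\mathbb{U}_2$-representable), and for larger $M$ it uses Lemmas \ref{2.13}, \ref{2.15}, \ref{2.16}, \ref{fcon}, and \ref{incotri} to show that a bad minor of $\Delta_A(M)$ would force a smaller counterexample. Unless you can point to a standalone confinement theorem with exactly your hypotheses, you should replace the $\mathbb{U}_2$-route with this minimal-counterexample argument.
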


\begin{proof}
The proof that $\Delta_A(M)$ is a  $\{U_{2,5}, U_{3,5}\}$-fragile $\gf(4)$-representable matroid where $A$ is an allowable cosegment of $\Delta_A(M)$ closely follows the proof of \cite[Lemma 2.21]{clark2016fragile}. The only difference is where the proof of \cite[Lemma 2.21]{clark2016fragile} uses the fact that a $\mathbb{U}_2$-representable matroid with no $\{U_{2,5}, U_{3,5}\}$-minor is near-regular and the class of near-regular matroids is closed under the generalized $\Delta$-$Y$ exchange, we instead use Lemma \ref{nou25oru35}.

We must also show that $\Delta_A(M)$ has no $\{P_8^{-},F_7^{=}, (F_7^{=})^{*}\}$-minor. This follows for $|E(M)|\leq 9$ from the generation of the $3$-connected $\{U_{2,5}, U_{3,5}\}$-fragile $\gf(4)$-represen-table matroids with no $\{P_8^{-},F_7^{=}, (F_7^{=})^{*}\}$-minor on at most $9$ elements (see the Appendix \cite{COZarx}), since all such matroids are $\mathbb{U}_2$-representable. Suppose that $M$ is a minimum-sized counterexample, so $\Delta_A(M)$ has a $\{P_8^{-},F_7^{=}, (F_7^{=})^{*}\}$-minor and $\Delta_A(M)$ has at least ten elements. Then $\Delta_A(M)$ has a minor $N$, obtained by deleting or contracting an element $x$ say, that also has a $\{P_8^{-},F_7^{=}, (F_7^{=})^{*}\}$-minor. Since $\Delta_A(M)$ is $\{U_{2,5}, U_{3,5}\}$-fragile it follows that the minor $N$ is also $\{U_{2,5}, U_{3,5}\}$-fragile. Suppose that $N= \Delta_A(M)/x$. Suppose that $x\in A$. Then $\Delta_A(M)/x=\Delta_{A-x}(M\del x)$ by Lemma \ref{2.13}, a contradiction since $M$ is a minimum-sized counterexample. Next suppose that $x\in \cl(A)-A$. Since $N$ is $\{U_{2,5}, U_{3,5}\}$-fragile it follows from Lemma \ref{2.15} and Proposition \ref{fcon} that $|A|=4$ and $M/x\del (A-a)\cong U_{1,n}$ for some $n\geq 2$. Hence $\Delta_A(M)$ has no $\{P_8^{-},F_7^{=}, (F_7^{=})^{*}\}$-minor, a contradiction. We may now assume $x\in E(M)-\cl(A)$. Then $\Delta_A(M)/x=\Delta_A(M/x)$ by Lemma \ref{2.16}, a contradiction since $M$ is a minimum-sized counterexample. We deduce that $N=\Delta_A(M)\del x$, and we may assume that any minor obtained from $\Delta_A(M)$ by contracting an element has no $\{P_8^{-},F_7^{=}, (F_7^{=})^{*}\}$-minor. Now if $x\in A$, then $A-x$ is a series class of $\Delta_A(M)\del x$, so there is some $y\in A$ such that $\Delta_A(M)/y$ has a $\{P_8^{-},F_7^{=}, (F_7^{=})^{*}\}$-minor, a contradiction. Therefore $x\notin A$. If $A$ is not coindependent in $M\del x$, then it follows from Lemma \ref{incotri} that $\Delta_A(M)$ has no $\{P_8^{-},F_7^{=}, (F_7^{=})^{*}\}$-minor, a contradiction. Therefore $A$ is coindependent in $M\del x$, so $\Delta_A(M)\del x=\Delta_A(M\del x)$ by Lemma \ref{2.16}, a contradiction since $M$ is a minimum-sized counterexample.
\end{proof}

Let $M$ be a matroid, and $(a,b,c)$ an ordered subset of $E(M)$ such that $T = \{a,b,c\}$ is a triangle. Let $r\geq 3$ be a positive integer, and, when $r=3$, we fix a vertex of $\mathcal{W}_3$ to be the center, so we can refer to rim and spoke elements of $M(\mathcal{W}_3)$. Let $N$ be obtained from $M(\mathcal{W}_r)$ by relabeling some triangle as $\{a,b,c\}$, where $a,c$ are spoke elements, and let $X \subseteq \{a,b,c\}$ such that $b \in X$. We say the matroid $M' := P_T(M,N) \delete X$ is obtained from $M$ by \emph{gluing an $r$-wheel onto $(a,b,c)$}. We also say that $M^{*}$ is obtained from $N^{*}$ by gluing a wheel onto the triad $T$. Suppose that $T_1, T_2, \ldots, T_n$ are ordered triples whose underlying sets are triangles of $M$. We say $M'$ can be obtained from $M$ by \textit{gluing wheels onto $T_1, T_2, \ldots, T_n$} if, for some subset $J$ of $\{1,2,\ldots, n\}$, $M'$ can be obtained from $M$ by a sequence of moves, where each move consists of gluing an $r_j$-wheel onto $T_j$ for $j\in J$. Note that the spoke elements of a triangle in this sequence may only be deleted as part of the gluing operation when they do not appear in any subsequent triangle in the sequence.

\begin{lemma}
\label{fanfragilegf4}
Let $M$ be a $\{U_{2,5}, U_{3,5}\}$-fragile $\gf(4)$-representable matroid with no $\{P_8^{-},F_7^{=},$ $(F_7^{=})^{*}\}$-minor. Let $A = \{a,b,c\}$ be an allowable triangle of $M$, where $b$ is nondeletable. If $M'$ is obtained from $M$ by gluing an $r$-wheel onto $(a,b,c)$, where $X \subseteq \{a,b,c\}$ is such that $b \in X$, then $M'$ is a $\{U_{2,5}, U_{3,5}\}$-fragile $\gf(4)$-representable matroid with no $\{P_8^{-},F_7^{=}, (F_7^{=})^{*}\}$-minor. Moreover, $F = E(\mathcal{W}_r) - X$ is the set of elements of a fan, the spoke elements of $F$ are noncontractible in $M'$, and the rim elements of $F$ are nondeletable in $M'$. 
\end{lemma}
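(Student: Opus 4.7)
The plan is to mirror the structure of the proof of Lemma \ref{amovegf4}: the analogous result for $\mathbb{U}_2$-representable matroids, proved in \cite{clark2016fragile}, already gives most of the conclusion, and the remaining task is to adapt that argument to the $\gf(4)$-representable setting and to verify that the additional excluded minors $\{P_8^{-},F_7^{=},(F_7^{=})^{*}\}$ do not arise.

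The claims that $M'$ is $\{U_{2,5},U_{3,5}\}$-fragile, that $F=E(\mathcal{W}_r)-X$ forms a fan with the stated spoke/rim labelling, and that the spoke elements of $F$ are noncontractible while the rim elements are nondeletable in $M'$, all follow essentially verbatim from the corresponding argument in \cite{clark2016fragile}. The only substantive change is that where that proof uses closure of the near-regular class under the relevant operations, we instead use that $\gf(4)$-representability is preserved under generalized parallel connection along a modular triangle and under deletion, so that $M'=P_T(M,M(\mathcal{W}_r))\del X$ inherits a $\gf(4)$-representation from $M$ and a compatibly scaled $\gf(4)$-representation of $M(\mathcal{W}_r)$ agreeing on $A$.

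The new content is the verification that $M'$ has no $\{P_8^{-},F_7^{=},(F_7^{=})^{*}\}$-minor, which I would prove by a minimum-counterexample argument patterned on the corresponding step of Lemma \ref{amovegf4}. Assume $M$ is a minimum-sized counterexample, so that $M'$ has a forbidden minor while $M$ does not. For small $|E(M')|$ the claim is handled by the computer enumeration in the Appendix of the $3$-connected $\{U_{2,5},U_{3,5}\}$-fragile $\gf(4)$-representable matroids with no $\{P_8^{-},F_7^{=},(F_7^{=})^{*}\}$-minor on at most $9$ elements; all such matroids are $\mathbb{U}_2$-representable, so the base case holds. For the inductive step, take any element $x$ with $M'\del x$ or $M'/x$ still having a forbidden minor. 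If $x$ lies in the fan $F$, then contracting a rim element or deleting a spoke element produces a gluing of a shorter wheel onto $A$, and induction applies. If $x\in E(M)-A$, then deletion and contraction commute with the generalized parallel connection (as in Lemma \ref{2.16}), so $M'\del x$ (respectively $M'/x$) is obtained by gluing the same wheel onto $(a,b,c)$ inside $M\del x$ (respectively $M/x$); by fragility of $M$, whichever of these remains $\{U_{2,5},U_{3,5}\}$-fragile contradicts the minimality of $M$.

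The main obstacle will be the case analysis for $x\in F$, keeping track of how deletion of a spoke or contraction of a rim simultaneously shortens the wheel and shortens the fan, and correctly handling the boundary elements $a,c$ of $A$ depending on whether each lies in $X$. A secondary delicate point is ensuring that when $x\in A$, one invokes Lemma \ref{2.13} or the wheel-reduction identity rather than the commutation of Lemma \ref{2.16}, so that the resulting smaller matroid is again of the form to which the inductive hypothesis applies.
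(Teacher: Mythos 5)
Your proposal departs from the paper's proof in a way that creates unnecessary extra work and leaves gaps you yourself flag. The paper's proof of Lemma~\ref{fanfragilegf4} is not a fresh minimum-counterexample argument on the wheel-gluing operation. It simply runs the proof of \cite[Lemma 2.22]{clark2016fragile}, which realizes ``gluing an $r$-wheel onto an allowable triangle'' as a sequence of generalized $\Delta$-$Y$ exchanges on allowable segments interleaved with allowable series and parallel extensions, and then invokes Lemma~\ref{amovegf4} in place of \cite[Lemma 2.21]{clark2016fragile} at each $\Delta$-$Y$ step. Because Lemma~\ref{amovegf4} already carries the conclusion that the result has no $\{P_8^{-},F_7^{=},(F_7^{=})^{*}\}$-minor, and because an allowable series or parallel extension cannot create a $3$-connected minor that was not already present (and $P_8^{-}$, $F_7^{=}$, $(F_7^{=})^{*}$ are $3$-connected), the no-forbidden-minor property propagates through the whole decomposition for free. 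Your statement that ``the only substantive change'' from \cite{clark2016fragile} is swapping near-regularity for $\gf(4)$-representability misses this: the substitution of Lemma~\ref{amovegf4} for \cite[Lemma 2.21]{clark2016fragile} simultaneously supplies the representability conclusion \emph{and} the excluded-minor conclusion, so there is no ``new content'' left to establish by a separate induction.

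Your alternative---a direct minimum-counterexample on $M'$ itself---is harder than you allow, and the gaps you mention are real. For $x$ in the fan $F$: shortening the wheel by contracting a rim or deleting a spoke eventually hits $r=3$, where the glued structure degenerates and the inductive hypothesis in the form stated does not apply; you would need a separate base argument there, not a deferral to the Appendix (which enumerates only the $3$-connected members on at most nine elements). For $x\in E(M)-A$: you say ``by fragility of $M$, whichever of these remains $\{U_{2,5},U_{3,5}\}$-fragile contradicts the minimality of $M$,'' but fragility of $M$ only guarantees that at most one of $M\del x$, $M/x$ retains a $\{U_{2,5},U_{3,5}\}$-minor; it does not tell you that this is the one whose corresponding $M'\del x$ or $M'/x$ carries the forbidden minor, nor that $A$ remains an allowable triangle in it. Aligning those two facts needs an argument about nondeletable/noncontractible elements of $M'$ that you have not supplied, and you also need commutation lemmas for $P_T(\cdot,M(\mathcal{W}_r))\del X$ analogous to Lemmas~\ref{2.13}--\ref{2.16}, which the paper never proves because its decomposition approach makes them unnecessary. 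In short: correct conclusion, but a genuinely different and substantially more delicate route, with the key reduction to Lemma~\ref{amovegf4} missing.
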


\begin{proof}
The proof is the same as \cite[Lemma 2.22]{clark2016fragile} except that we use Lemma \ref{amovegf4} instead of \cite[Lemma 2.21]{clark2016fragile}.
\end{proof}

\subsection{Path sequences}

We can now describe a family of $\{U_{2,5}, U_{3,5}\}$-fragile $\gf(4)$-representable matroids with no $\{P_8^{-},F_7^{=}, (F_7^{=})^{*}\}$-minor obtained by performing generalized $\Delta$-$Y$ exchanges and gluing on wheels. In fact, the matroids in this family are $\mathbb{U}_2$-representable and were first described in \cite{clark2016fragile}. Each matroid in this family has a $\{X_8, Y_8, Y_8^*\}$-minor, and an associated path of $3$-separations that we need to describe in order to define the family.

We call the set $X \subseteq E(M)$ \textit{fully closed} if $X$ is closed in both $M^{*}$ and $M$. The \textit{full closure} of $X$, denoted $\fcl_M(X)$, is the intersection of all fully closed sets containing $X$. The full closure of $X$ can be obtained from $X$ by repeatedly taking closure and coclosure until no new elements are added. We call $X$ a \textit{path-generating} set if $X$ is a $3$-separating set of $M$ such that $\fcl_M(X)=E(M)$. A path-generating set $X$ thus gives rise to a natural path of $3$-separating sets $(P_1,\ldots, P_m)$, where $P_1 = X$ and each step $P_i$ is either the closure or coclosure of the $3$-separating set $P_1\cup \cdots \cup P_{i-1}$. 

Let $X$ be an allowable cosegment of the $\{U_{2,5},U_{3,5}\}$-fragile matroid $M$. A matroid $Q$ is an \textit{allowable series extension of $M$ along $X$} if $M=Q/Z$ and, for every element $z$ of $Z$, there is some element $x$ of $X$ such that $x$ is $\{U_{2,5},U_{3,5}\}$-contractible in $M$ and $z$ is in series with $x$ in $Q$. We also say that $Q^{*}$ is an \textit{allowable parallel extension of $M^{*}$ along $X$}.

Let $N$ be a matroid with a path-generating allowable segment or cosegment $A$. We say that $M$ is obtained from $N$ by a \textit{\dy-step} along $A$ if, up to duality, $M$ is obtained from $N$ by performing a non-empty allowable parallel extension along $A$, followed by a generalized $\Delta$-$Y$ exchange on $A$. 

Let $X_8$ be the matroid obtained from $U_{2,5}$ by choosing a $4$-element segment $C$, adding a point in parallel with each of three distinct points of $C$, then performing a generalized $\Delta$-$Y$-exchange on $C$ (see Figure \ref{fig:Y8X8}). In what follows, $S$ will be the elements of the 4-element segment of $X_8$, and $C$ the elements of the 4-element cosegment of $X_8$, so $E(X_8) = S \cup C$. We will build matroids from $X_8$ by performing a sequence of \dy-steps along $A \in \{S,C\}$. Note that, in such matroids, each of $S$ and $C$ can be either a segment or a cosegment.

\begin{figure}[tbp]
        \centering
        \includegraphics{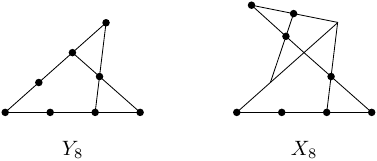}
        
        \caption{The matroids $Y_8$ and $X_8$.}\label{fig:Y8X8}
\end{figure}

A sequence of matroids $M_1,\ldots, M_n$ is called a \textit{path sequence} if the following conditions hold:
\begin{enumerate}
 \item[(P1)] $M_1=X_8$; and
 \item[(P2)] For each $i\in \{1,\ldots, n-1\}$, there is some $4$-element path-generating segment or cosegment $A \in \{S,C\}$ of $M_i$ such that either: 
\begin{itemize}
 \item[(a)] $M_{i+1}$ is obtained from $M_{i}$ by a \dy-step along $A$; or
 \item[(b)] $M_{i+1}$ is obtained from $M_i$ by gluing a wheel onto an allowable subset $A'$ of $A$.
\end{itemize}
\end{enumerate}

Note in (P2) that each \dy-step described in (a) increases the number of elements by at least one, and that the wheels in (b) are only glued onto allowable subsets of 4-element segments or cosegments.

We say that a path sequence $M_1,\ldots, M_n$ \textit{describes} a matroid $M$ if $M_n\cong M$. We also say that $M$ is a matroid \textit{described by} a path sequence if there is some path sequence that describes $M$. Let $\mathcal{P}$ denote the class of matroids such that $M\in \mathcal{P}$ if and only if there is some path sequence $M_1,\ldots, M_n$ that describes a matroid $M'$ such that $M$ can be obtained from $M'$ by some, possibly empty, sequence of allowable parallel and series extensions. Since $X_8$ is self-dual, it is easy to see that the sequence of dual matroids $M_1^{*},\ldots, M_n^{*}$ of a path sequence $M_1,\ldots, M_n$ is also a path sequence.  Thus the class $\mathcal{P}$ is closed under duality. 

We denote by $Y_8$ the unique matroid obtained from $X_8$ by performing a $Y$-$\Delta$-exchange on an allowable triad (see Figure \ref{fig:Y8X8}). We will prove the following result. 

\begin{theorem}
\label{mainresultgf4}
If $M$ is a $3$-connected $\{U_{2,5}, U_{3,5}\}$-fragile $\gf(4)$-representable matroid that has an $\{X_8,Y_8,Y_8^{*}\}$-minor but no $\{P_8^{-},F_7^{=}, (F_7^{=})^{*}\}$-minor, then there is some path sequence that describes $M$.
\end{theorem}

The proof of Theorem \ref{mainresultgf4} closely follows the proof of \cite[Corollary 4.3]{clark2016fragile}. The strategy is to show that a minor-minimal counterexample has at most 12 elements. Let $M$ be a $\gf(4)$-representable $\{U_{2,5}, U_{3,5}\}$-fragile matroid $M$ with an $\{X_8,Y_8,Y_8^{*}\}$-minor but no $\{P_8^{-},F_7^{=}, (F_7^{=})^{*}\}$-minor. Suppose that $M$ is a minimum-sized matroid that is not in the class $\mathcal{P}$. Then $M$ is $3$-connected because $\mathcal{P}$ is closed under series and parallel extensions. Moreover, the dual $M^{*}$ is also not in $\mathcal{P}$ because $\mathcal{P}$ is closed under duality. Thus, by the Splitter Theorem and duality, we may assume there is some element $x$ of $M$ such that $M\del x$ is also a $3$-connected $\gf(4)$-representable $\{U_{2,5}, U_{3,5}\}$-fragile matroid with an $\{X_8,Y_8,Y_8^{*}\}$-minor but no $\{P_8^{-},F_7^{=}, (F_7^{=})^{*}\}$-minor. By the assumption that $M$ is minimum-sized with respect to being outside the class $\mathcal{P}$, it follows that $M\del x\in \mathcal{P}$. Thus $M\del x$ is described by a path sequence $M_1,\ldots, M_n$. The next lemma \cite[Lemma 6.3]{clark2016fragile} identifies the three possibilities for the position of $x$ in $M$ relative to the path of $3$-separations associated with $M_1,\ldots, M_n$. 

\begin{lemma}
\label{blocksorguts}
Let $M$ and $M\del x$ be $3$-connected $\{U_{2,5}, U_{3,5}\}$-fragile matroids. If $M\del x$ is described by a path sequence with associated path of $3$-separations $\mathbf{P}$, then either:
\begin{enumerate}
 \item[(i)] there is some $3$-separation $(X,Y)$ displayed by $\mathbf{P}$ such that $x\in \cl(X)$ and $x\in \cl(Y)$; or
 \item[(ii)] there is some $3$-separation $(X,Y)$ displayed by $\mathbf{P}$ such that $x\notin \cl(X)$ and $x\notin \cl(Y)$; or 
 \item[(iii)] for each $3$-separation $(R,G)$ of $M$ displayed by $\mathbf{P}$, there is some $X\in \{R,G\}$ such that $x\in \cl_M(X)$ and $x\in \cl_M^{*}(X)$.
\end{enumerate}
\end{lemma}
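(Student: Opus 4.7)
The plan is to reduce the lemma to a direct case analysis on how the single element $x$ can interact with each displayed $3$-separation of $M\del x$. First I would fix such a displayed $3$-separation $(R,G)$, so that $\lambda_{M\del x}(R)=2$, and use that $M$ is $3$-connected to rule out $x$ being a loop or coloop, giving $r(M)=r(M\del x)$ and $r_M(R)=r_{M\del x}(R)$ (and similarly for $G$). A direct computation of $\lambda_M(R)=r_M(R)+r_M(G\cup x)-r(M)$ then shows that $\lambda_M(R)=2$ when $x\in\cl_M(G)$ and $\lambda_M(R)=3$ when $x\notin\cl_M(G)$; symmetrically, the value of $\lambda_M(R\cup x)$ records whether $x\in\cl_M(R)$.

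This leaves exactly four mutually exclusive possibilities at each displayed $3$-separation: (a) $x\in\cl_M(R)\cap\cl_M(G)$, which matches conclusion (i); (b) $x\in\cl_M(R)\setminus\cl_M(G)$; (c) $x\in\cl_M(G)\setminus\cl_M(R)$; and (d) $x\notin\cl_M(R)\cup\cl_M(G)$, which (together with the $\lambda_M\geq 3$ calculation on both sides) matches conclusion (ii). In case (b), the condition $x\notin\cl_M(G)$ is precisely the definition of $x\in\cl_M^{*}(R)$, so combined with $x\in\cl_M(R)$ we obtain exactly the condition in (iii) with $X=R$; case (c) is symmetric with $X=G$.

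The lemma then follows by contrapositive: if (i) fails at every displayed $3$-separation (no separation has $x$ as a guts element) and (ii) fails at every displayed $3$-separation (no separation is ``blocked'' by $x$), then every displayed $3$-separation must fall into case (b) or (c), and in either case we get the element $X\in\{R,G\}$ that (iii) demands. The only bookkeeping point worth flagging as the ``hard part'' is to keep the closure and coclosure in the ambient matroid $M$, not in $M\del x$: the identification of $x\in\cl_M^{*}(R)$ with $x\notin\cl_M(G)$ uses that $R\cup G\cup\{x\}=E(M)$, and the rank computations above must use $r_M$, not $r_{M\del x}$, on any set containing $x$. Beyond this, the argument is purely definitional.
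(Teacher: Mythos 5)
Your proposal is correct. The paper itself does not prove this lemma---it cites it verbatim as \cite[Lemma 6.3]{clark2016fragile}---so there is no in-paper argument against which to compare. Your argument is the natural one and I believe it is sound: fixing a displayed $3$-separation $(R,G)$ of $M\del x$, the rank calculation $\lambda_M(R) = \lambda_{M\del x}(R) + \bigl(r_M(G\cup x) - r_M(G)\bigr)$ shows that $x\in\cl_M(G)$ is equivalent to $\lambda_M(R)=2$, and similarly for $R$, which cleanly partitions the possibilities into your four cases. The identification $x\in\cl_M^*(R) \Leftrightarrow x\notin\cl_M(E(M)-R-x) = \cl_M(G)$ is exactly the standard closure/coclosure duality, and your flagged caveat about $R\cup G\cup\{x\}=E(M)$ is precisely what makes this identification apply. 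The contrapositive then works: ruling out case (a) everywhere (negation of (i)) and case (d) everywhere (negation of (ii)) leaves only cases (b) and (c) at every displayed separation, each of which produces the required $X\in\{R,G\}$ for (iii). The only cosmetic remark is that you should also note (or at least silently require) that $M$ is $3$-connected with $|E(M)|\geq 4$ so that $x$ is neither a loop nor a coloop, guaranteeing $r(M)=r(M\del x)$; this is implicit in the lemma's hypotheses since $\{U_{2,5},U_{3,5}\}$-fragile matroids are large enough, but it is worth saying.
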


The proofs of the next three lemmas follow the proofs of \cite[Lemma 7.4]{clark2016fragile}, \cite[Lemma 8.7]{clark2016fragile}, and \cite[Lemma 9.7]{clark2016fragile} but use Lemma \ref{amovegf4} above instead of \cite[Lemma 2.21]{clark2016fragile}.

\begin{lemma}
Lemma \ref{blocksorguts} (i) does not hold. 
\end{lemma}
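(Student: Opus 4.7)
The approach is by contradiction, closely following the proof of \cite[Lemma 7.4]{clark2016fragile} but invoking Lemma \ref{amovegf4} in place of \cite[Lemma 2.21]{clark2016fragile}. Suppose there is a $3$-separation $(X,Y)$ displayed by $\mathbf{P}$ with $x\in\cl_M(X)\cap\cl_M(Y)$. Then $x$ is a guts element of $(X,Y)$ in $M$, so both $(X\cup\{x\},Y)$ and $(X,Y\cup\{x\})$ are $3$-separations of $M$. The goal is to exhibit a path sequence that describes $M$ itself, contradicting $M\not\in\mathcal{P}$.

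First I would localize the hypothesis to an elementary step of the path sequence $M_1,\dots,M_n$ describing $M\del x$, that is, replace $(X,Y)$ by a $3$-separation displayed by $\mathbf{P}$ that arises from a single move from $M_i$ to $M_{i+1}$. By (P2), such a move is either a \dy-step along a $4$-element path-generating allowable segment or cosegment $A\in\{S,C\}$ of $M_i$, or a gluing of an $r$-wheel onto an allowable subset $A'\subseteq A$. In each case, the plan is to show that $x$ can be absorbed into that step, yielding a modified path sequence whose terminal matroid is $M$.

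For the \dy-step case, the requirement that $x$ lies in the closure of both sides of $(X,Y)$ forces $x\in\cl_{M_i'}(A)$ for some intermediate matroid $M_i'$ obtained by an allowable parallel extension of $M_i$ along $A$. Consequently $x$ could have been introduced as part of that allowable parallel extension, and performing the subsequent generalized $\Delta$-$Y$ exchange on $A$ yields $M$; Lemma \ref{amovegf4} then guarantees that the result is a $\{U_{2,5},U_{3,5}\}$-fragile $\gf(4)$-representable matroid with no $\{P_8^{-},F_7^{=},(F_7^{=})^{*}\}$-minor. In the wheel-gluing case, the position of $x$ as a guts element, combined with the $3$-connectivity of $M$ and Lemma \ref{fcon}, either forces $x$ to be a new rim/spoke element enlarging the glued wheel from $r$ to $r+1$ (which is still a valid wheel glue onto the same allowable subset), or contradicts fragility of $M$.

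The main obstacle is the case analysis of the possible guts positions for $x$ within each step, especially for wheel gluings, where multiple rim/spoke configurations relative to $A'$ must be ruled out or converted into a larger wheel. A secondary obstacle is the short-path base cases in which $M\del x$ is obtained from $X_8$, $Y_8$, or $Y_8^{*}$ by only a few steps: these are settled by the computer-assisted enumeration in the Appendix, which lists the small $3$-connected $\{U_{2,5},U_{3,5}\}$-fragile $\gf(4)$-representable matroids with no $\{P_8^{-},F_7^{=},(F_7^{=})^{*}\}$-minor and verifies that none of them admits a guts extension $M$ outside $\mathcal{P}$.
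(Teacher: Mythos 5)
Your proposal matches the paper's approach exactly: the paper proves this lemma by simply citing the proof of \cite[Lemma 7.4]{clark2016fragile} and substituting Lemma~\ref{amovegf4} for \cite[Lemma 2.21]{clark2016fragile}, which is precisely what your opening sentence states. The remainder of your writeup is reasonable speculation about the internals of the cited argument, but the paper itself offers no detail beyond the one-line reference, so there is nothing further against which to check it.
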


\begin{lemma}
\label{bigblocking}
If Lemma \ref{blocksorguts} (ii) holds, then $|E(M\del x)|\leq 10$.
\end{lemma}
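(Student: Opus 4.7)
The plan is to follow the argument of \cite[Lemma 8.7]{clark2016fragile} step-for-step, substituting our Lemma \ref{amovegf4} (together with Lemma \ref{fanfragilegf4}) wherever the original invokes \cite[Lemma 2.21]{clark2016fragile}. The role of that lemma in the source is to guarantee that performing a generalized $\Delta$-$Y$ exchange on an allowable segment keeps us inside the relevant class; Lemma \ref{amovegf4} performs precisely the same service for the class of $\{U_{2,5},U_{3,5}\}$-fragile $\gf(4)$-representable matroids with no $\{P_8^{-},F_7^{=},(F_7^{=})^{*}\}$-minor, so the outer logic transfers essentially verbatim.

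Concretely, I would fix the 3-separation $(X,Y)$ of $M\del x$ displayed by $\mathbf{P}$ with $x\notin \cl_M(X)$ and $x\notin \cl_M(Y)$, identify the first stage $M_k$ of the path sequence $M_1,\ldots,M_n$ at which this partition arises, and proceed by induction on $n$. Assuming for contradiction that $|E(M\del x)|\geq 11$, the longer of $X,Y$---say $Y$---must contain material from at least one path step beyond $M_k$. That final step is either a \dy-step along a path-generating allowable segment or cosegment $A$, or a wheel-gluing onto an allowable subset of such an $A$; in either case the step is reversible, and the reverse move produces a proper minor $M'$ of $M\del x$ that still displays a 3-separation blocked by $x$ and still lies in the class, by Lemma \ref{amovegf4} or Lemma \ref{fanfragilegf4}, as appropriate. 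Induction then forces $|E(M'\del x)|\leq 10$, and bookkeeping the elements added back by the reversed step produces the contradiction.

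The main obstacle is the case analysis verifying that after undoing the last path step the element $x$ continues to block the resulting 3-separation: a $Y$-$\Delta$ exchange could a priori pull $x$ into $\cl(Y)$, and the removal of a glued wheel could create short series or parallel classes that place $x$ on the wrong side of a newly displayed 3-separation. This bookkeeping is the combinatorial heart of \cite[Lemma 8.7]{clark2016fragile}, and the delicate position-of-$x$ arguments there transfer directly since the only places where the representability hypothesis is actually used are guarded by \cite[Lemma 2.21]{clark2016fragile}, for which Lemma \ref{amovegf4} is a drop-in replacement. Finally, the cutoff $|E(M\del x)|\leq 10$ reflects that $X_8$ has eight elements and that at most one further path step can be accommodated before $x$ is forced into the closure of one of the sides of the displayed 3-separation, contradicting case (ii).
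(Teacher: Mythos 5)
Your proposal is correct and takes essentially the same approach as the paper: the paper's own proof is precisely the one-line reduction you state up front, namely that the argument of \cite[Lemma 8.7]{clark2016fragile} carries over verbatim once Lemma \ref{amovegf4} is substituted for \cite[Lemma 2.21]{clark2016fragile}. The additional sketch you give of the induction on the path sequence is a plausible reconstruction of that cited argument, but the paper itself does not spell it out and simply defers to the reference as you do.
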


\begin{lemma}
\label{9elts}
If Lemma \ref{blocksorguts} (iii) holds, then $|E(M\del x)|\leq 11$.
\end{lemma}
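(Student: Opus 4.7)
The plan is to follow the proof of \cite[Lemma 9.7]{clark2016fragile} verbatim, making only one substantive substitution: wherever the published argument invokes \cite[Lemma 2.21]{clark2016fragile} to preserve $\mathbb{U}_2$-representability under a generalized $\Delta$-$Y$ exchange on an allowable segment, we instead apply Lemma \ref{amovegf4} to preserve the class of $\gf(4)$-representable $\{U_{2,5},U_{3,5}\}$-fragile matroids with no $\{P_8^{-},F_7^{=}, (F_7^{=})^{*}\}$-minor. Lemma \ref{amovegf4} delivers exactly the closure property required of our ambient class, and this is the only place the ambient class plays an essential role in the argument.

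First I would extract structural content from Lemma \ref{blocksorguts}~(iii). For every displayed 3-separation $(R,G)$ from the path $\mathbf{P}$ associated to the path sequence $M_1,\ldots,M_n$ describing $M\del x$, one side $X$ satisfies $x\in\cl_M(X)\cap\cl_M^{*}(X)$, i.e.\ $x\in\fcl_M(X)$. Applying this to every 3-separation displayed by $\mathbf{P}$ locates a minimal block $P_j$ of the path such that $x$ lies in the full closure of $P_1\cup\cdots\cup P_j$ from the left and in the full closure of $P_j\cup\cdots\cup P_m$ from the right. Thus $x$ is \emph{glued} to a single local substructure of the path decomposition.

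Next I would split according to the nature of that block: either $P_j$ is one of the $X_8$-blocks $S$ or $C$, or it is created by a \dy-step along an allowable 3- or 4-element segment/cosegment, or it is created by gluing a wheel onto an allowable triangle. In each case, since $x$ is in the full closure of the block on both sides, the extension $M=(M\del x)+x$ takes one of a short list of local forms, and one may reduce $M$ by deleting or contracting an appropriate element (using Lemmas \ref{2.13}, \ref{2.16}, \ref{fanfragilegf4}, and \ref{amovegf4} to keep the reduction inside the working class and to move \dy-steps past the deletion or contraction) to produce a proper 3-connected minor $M_0$ of $M$ that still has an $\{X_8,Y_8,Y_8^{*}\}$-minor, is $\{U_{2,5},U_{3,5}\}$-fragile, is $\gf(4)$-representable, and has no $\{P_8^{-},F_7^{=},(F_7^{=})^{*}\}$-minor, but is not in $\mathcal{P}$. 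By the minimality of $M$, this is a contradiction unless the path sequence for $M\del x$ is short.

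The main obstacle, exactly as in the published argument, is the case analysis bounding the number of blocks that can survive once $x$ is adjoined without producing either a non-$3$-connected configuration or one of the forbidden minors $P_8^{-}$, $F_7^{=}$, $(F_7^{=})^{*}$. This is purely local at the block $P_j$ and its immediate neighbours, so the combinatorial bookkeeping of \cite[Lemma 9.7]{clark2016fragile} carries over unchanged; the only new input is that closures of the quaternary extended-minor class under \dy-exchange are provided by Lemma \ref{amovegf4} rather than by near-regularity. Once this substitution is made throughout, the bound $|E(M\del x)|\leq 11$ is obtained in the same way.
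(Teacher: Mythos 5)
Your proposal is correct and takes essentially the same approach as the paper: the paper's own proof is exactly the remark that the argument of \cite[Lemma 9.7]{clark2016fragile} goes through verbatim once Lemma \ref{amovegf4} is substituted for \cite[Lemma 2.21]{clark2016fragile}, which is precisely the substitution you identify and carry out.
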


\begin{proof}[Proof of Theorem \ref{mainresultgf4}]
In view of the last three lemmas, it suffices to verify that $\mathcal{P}$ contains each $3$-connected $\{U_{2,5}, U_{3,5}\}$-fragile $\gf(4)$-representable matroid with an $\{X_8,Y_8,Y_8^{*}\}$-minor and no $\{P_8^{-},F_7^{=}, (F_7^{=})^{*}\}$-minor having at most 12 elements. This is done in the Appendix \cite{COZarx}.
\end{proof}

\subsection{Fan extensions}

The following theorem describes the structure of the matroids with no $\{X_8,Y_8,Y_8^{*}\}$-minor. Note that $M_{9,9}$ is the rank-$4$ matroid on $9$ elements in Figure \ref{fig:M71M99}. The matroid $M_{7,1}$ is the $7$-element matroid that is obtained from $Y_8$ by deleting the unique point that is contained in the two $4$-element segments of $Y_8$. We label the points of a triangle of $M_{7,1}$ by $\{1,2,3\}$ as in Figure \ref{fig:M71M99}.

\begin{figure}[tbp]
        \centering
        \includegraphics{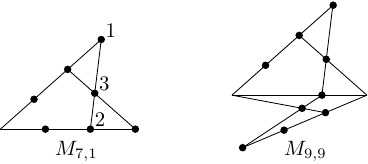}
        
        \caption{The matroids $M_{7,1}$ and $M_{9,9}$.}\label{fig:M71M99}
\end{figure}

\begin{theorem}
 \label{computer2}
  Let $M'$ be a $3$-connected $\{U_{2,5}, U_{3,5}\}$-fragile $\gf(4)$-representable matroid with no $\{P_8^{-},F_7^{=}, (F_7^{=})^{*}\}$-minor. Then $M'$ is isomorphic to a matroid $M$ for which at least one of the following holds: 
    \begin{enumerate}
  \item[(i)] $M$ has an $\{X_8,Y_8,Y_8^{*}\}$-minor;
  \item[(ii)] $M\in \{M_{9,9}, M_{9,9}^{*}\}$;
  \item[(iii)] $M$ or $M^{*}$ can be obtained from $U_{2,5}$ (with ground set $\{a,b,c,d,e\}$) by gluing wheels to $(a,c,b)$,$(a,d,b)$, $(a,e,b)$;
  \item[(iv)] $M$ or $M^{*}$ can be obtained from $U_{2,5}$ (with ground set $\{a,b,c,d,e\}$) by gluing wheels to $(a,b,c)$, $(c,d,e)$;
  \item[(v)] $M$ or $M^{*}$ can be obtained from $M_{7,1}$ by gluing a wheel to $(1,3,2)$.
  \end{enumerate}
\end{theorem}

\begin{proof}
Assume $M$ has no $\{X_8,Y_8,Y_8^{*}\}$-minor. For (ii), we show in Lemma 1 of the Appendix \cite{COZarx} that the matroids $M_{9,9}$ and $M_{9,9}^{*}$ are splitters for the class of $3$-connected $\{U_{2,5}, U_{3,5}\}$-fragile $\gf(4)$-representable matroids with no $\{P_8^{-},F_7^{=}, (F_7^{=})^{*}\}$-minor.

We may therefore assume $M$ has no $\{M_{9,9}, M_{9,9}^{*}, X_8,Y_8,Y_8^{*}\}$-minor. To show that (iii), (iv), or (v) holds, we use the main result of  \cite{chun2013fan} called the ``Fan Lemma'', which reduces the proof to showing that extensions and coextensions of the $9$-element matroids with this structure also have this structure. These verifications are completed in Lemmas 2 through 7 of the Appendix \cite{COZarx}.
\end{proof}

\section{From fragility to relaxations}

We use the following result of Mayhew, Whittle, and Van Zwam \cite[Lemma 8.2]{mayhew2010stability}.

\begin{lemma}
\label{whirls}
Let $M$ be a $3$-connected $U_{2,4}$-fragile matroid that has no $\{U_{2,6}, U_{4,6}\}$-minor. Then exactly one of the following holds.
\begin{enumerate}
\item[(i)] $M$ has rank or corank two;
\item[(ii)] $M$ has an $\{F_7^{-}, (F_7^{-})^{*}\}$-minor; 
\item[(iii)] $M$ has rank and corank at least $3$ and is a whirl.
\end{enumerate}
\end{lemma}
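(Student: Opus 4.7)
The plan is to show the three alternatives are mutually exclusive and exhaustive. Mutual exclusivity is mostly immediate: (i) and (iii) disagree on rank and corank; (i) excludes (ii) since both $F_7^-$ and $(F_7^-)^*$ have rank and corank three. For the incompatibility of (ii) and (iii) I would invoke the classical fact that the $3$-connected matroids with no $M(K_4)$-minor are the whirls together with rank-2 and corank-2 uniform matroids; since $F_7^-$ contains an $M(K_4)$-minor, neither it nor its dual is a minor of any whirl $\mathcal{W}^r$ with $r\geq 3$.

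For exhaustiveness, I assume $M$ has rank and corank at least $3$ and no $\{F_7^-,(F_7^-)^*\}$-minor, and show $M$ is a whirl by induction on $|E(M)|$. The base case is a finite check on rank-$3$ corank-$3$ matroids: of the $3$-connected $6$-element rank-$3$ matroids, $U_{3,6}$ is not $U_{2,4}$-fragile (both $U_{3,6}\!\setminus\! e$ and $U_{3,6}/e$ have a $U_{2,4}$-minor), the matroids $P_6$ and $Q_6$ either contain $U_{2,6}$ or $U_{4,6}$ or fail fragility, $M(K_4)$ is binary and so has no $U_{2,4}$-minor at all, and one is left with $\mathcal{W}^3$.

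For the inductive step I would apply Seymour's Splitter Theorem with $U_{2,4}$ as the keeper: since $M \not\cong U_{2,4}$ by the rank hypothesis, there is an element $e$ such that $N \in \{M\!\setminus\! e,\, M/e\}$ is $3$-connected and has a $U_{2,4}$-minor. The fragility of $M$ passes to $N$, and $N$ inherits the absence of $F_7^-$, $(F_7^-)^*$, $U_{2,6}$, and $U_{4,6}$ as minors. By induction, $N$ is either a whirl of rank at least $3$, or has rank or corank $2$. In the latter case, avoidance of $U_{2,6}$ and $U_{4,6}$ forces $N \in \{U_{2,4}, U_{2,5}, U_{3,5}\}$, and a direct check on the $3$-connected single-element extensions and coextensions (again keeping fragility and minor-exclusion) returns only $\mathcal{W}^3$.

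The main obstacle is the remaining case $N \cong \mathcal{W}^{r-1}$ with $r-1\geq 3$: one must show that every $3$-connected $U_{2,4}$-fragile single-element extension or coextension of $\mathcal{W}^{r-1}$ avoiding $\{F_7^-, (F_7^-)^*, U_{2,6}, U_{4,6}\}$ is isomorphic to $\mathcal{W}^r$. The plan for this is local: $\mathcal{W}^{r-1}$ carries an alternating sequence of triangles and triads (its fan structure), and fragility of the extended matroid forces the new element to lie in the closure of some triangle or, dually, in the coclosure of some triad. The exclusion of $U_{2,6}$ then forbids the new element from extending a triangle to a $4$-point line, and dually $U_{4,6}$ rules out analogous cosegment extensions; combined with $3$-connectivity, this pins the placement of the new element down to the unique configuration producing $\mathcal{W}^r$, completing the induction.
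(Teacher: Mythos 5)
The paper does not prove this lemma; it is quoted verbatim from Mayhew, Whittle, and van~Zwam \cite[Lemma 8.2]{mayhew2010stability}, so there is no in-paper argument to compare against. Evaluated on its own, your outline has the right high-level skeleton (mutual exclusivity plus a Splitter--Theorem induction), but the inductive step contains a genuine error. You assert that ``every $3$-connected $U_{2,4}$-fragile single-element extension or coextension of $\mathcal{W}^{r-1}$ avoiding $\{F_7^-,(F_7^-)^*,U_{2,6},U_{4,6}\}$ is isomorphic to $\mathcal{W}^r$.'' This is impossible on cardinality grounds alone: $\mathcal{W}^{r-1}$ has $2(r-1)$ elements while $\mathcal{W}^r$ has $2r$, so no matroid obtained from $\mathcal{W}^{r-1}$ by adding a single element can be $\mathcal{W}^r$. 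The correct assertion at this point of the induction is a \emph{nonexistence} statement, namely that there is no $3$-connected $U_{2,4}$-fragile matroid on $2r-1$ elements with rank and corank at least $3$ and none of the forbidden minors; one then has to run the Splitter chain a second step to reach $2r$ elements, where $\mathcal{W}^r$ can appear. Your write-up conflates these two steps into one, and as a result the ``local'' analysis at the end is directed at the wrong target.

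A second, more structural problem is the invocation of the Splitter Theorem ``with $U_{2,4}$ as the keeper.'' The Splitter Theorem carries a standing hypothesis when the kept minor is a whirl: it requires that $M$ have no larger whirl as a minor. Since $U_{2,4}\cong\mathcal{W}^2$ and you are precisely trying to climb toward larger whirls, that hypothesis is violated whenever it matters (for instance, $\mathcal{W}^3$ has no $3$-connected $5$-element minor with a $U_{2,4}$-minor, so there is no Splitter chain from $U_{2,4}$ to $\mathcal{W}^3$). The standard repair is to take $N$ to be the \emph{largest} whirl minor $\mathcal{W}^k$ of $M$ and run the chain from there; this makes your phrase ``the remaining case $N\cong\mathcal{W}^{r-1}$'' the first interesting case rather than one that arises from the induction on $N$'s side. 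With both of these fixed --- using the maximal whirl minor as the base of the chain and rephrasing the key step as a nonexistence result at odd cardinality --- your strategy could be completed, but as written the quantitative claim and the Splitter hypothesis are both wrong. (Minor side remark: you say $F_7^-$ and $(F_7^-)^*$ have ``rank and corank three''; they have $\{$rank, corank$\}=\{3,4\}$, which is still $\geq 3$ on each side, so the exclusivity argument survives.)
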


We show next that $P_8^{-},F_7^{=}, (F_7^{=})^{*}$ do not arise from circuit-hyperplane relaxation of a $\gf(4)$-representable matroid.

\begin{lemma}
\label{nobadminor}
Let $M$ and $M'$ be $\gf(4)$-representable matroids such that $M$ is connected, $M'$ is $3$-connected, and $M'$ is obtained from $M$ by relaxing a circuit-hyperplane $X$. Then $M'$ has no $\{P_8^{-},F_7^{=}, (F_7^{=})^{*}\}$-minor.
\end{lemma}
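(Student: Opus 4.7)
The plan is to trace a hypothetical minor $N \in \{P_8^{-}, F_7^{=}, (F_7^{=})^{*}\}$ of $M'$ back through the relaxation to a minor of $M$ that cannot be $\gf(4)$-representable.

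Since each of $P_8^{-}$, $F_7^{=}$, $(F_7^{=})^{*}$ is $\{U_{2,5},U_{3,5}\}$-fragile (as noted just before Lemma \ref{not2regular}), each has a $\{U_{2,5},U_{3,5}\}$-minor. Hence if $M'$ has any of them as a minor, $M'$ has a $\{U_{2,5},U_{3,5}\}$-minor, and Theorem \ref{fragilitymain} forces $M'$ to be $\{U_{2,5},U_{3,5}\}$-fragile with $X$ its basis of nondeletable elements and $E(M')-X$ its cobasis of noncontractible elements; by fragility, these sets partition $E(M')$. Writing $N=M'/I\setminus D$, I first argue $I\subseteq X$ and $D\subseteq E(M')-X$: any $\{U_{2,5},U_{3,5}\}$-minor of $N$ lifts to a $\{U_{2,5},U_{3,5}\}$-minor of $M'/e$ for each $e\in I$, so $e$ is not noncontractible and hence $e\in X$; the argument for $D$ is dual. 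With $I$ and $D$ so constrained, iterated application of Lemma \ref{Hminors} shows that $N$ is exactly the relaxation of $\tilde M:=M/I\setminus D$ at the circuit-hyperplane $\tilde X:=X-I$. Each contraction of an element of $I\subseteq X$ is legitimate because proper subsets of the circuit $X$ are independent in $M$ (so we never contract a loop), and each deletion of an element of $D\subseteq E-X$ is legitimate because $3$-connectivity of $M$ guarantees that $E-X$ contains no coloop at any stage. In particular, $\tilde M$ is $\gf(4)$-representable as a minor of $M$.

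It remains to show that no $\gf(4)$-representable matroid relaxes at a circuit-hyperplane to any of $P_8^{-}$, $F_7^{=}$, $(F_7^{=})^{*}$. Any such $\tilde M$ arises by \emph{tightening} $N$ at a basis $B$, that is, by removing $B$ from $\mathcal{B}(N)$ while ensuring $\mathcal{B}(N)\setminus\{B\}$ is still a basis family. Enumerating the bases of each $N$ and exploiting the large automorphism groups of $P_8$ and $F_7^{-}$, the only tightenings (up to isomorphism) are $P_8$, $F_7^{-}$, and $(F_7^{-})^{*}$. None of these is $\gf(4)$-representable, since each is representable only over fields of characteristic different from $2$, contradicting the $\gf(4)$-representability of $\tilde M$.

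The main obstacle is the tightening enumeration at the end: although each $N$ has very few orbits of tightening bases, producing a clean conceptual argument that no $\gf(4)$-representable tightening exists seems to require a direct finite verification. In practice this is a small computation, in keeping with the computational style of proofs elsewhere in the paper.
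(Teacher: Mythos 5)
Your proposal follows essentially the same route as the paper's proof: invoke Theorem~\ref{fragilitymain} to get $\{U_{2,5},U_{3,5}\}$-fragility of $M'$ and the nondeletable/noncontractible partition $X$, $E(M')-X$; use that structure to force the contraction set into $X$ and the deletion set into $E(M')-X$; then iterate Lemma~\ref{Hminors} to realize the target minor as a relaxation of a (necessarily $\gf(4)$-representable) minor of $M$, and conclude because the unrelaxed matroid is $P_8$, $F_7^-$, or $(F_7^-)^*$, none of which is $\gf(4)$-representable. Two cosmetic differences: the paper routes the ``which fragility case'' step through Lemma~\ref{whirls}, whereas you get there directly from the existence of a $\{U_{2,5},U_{3,5}\}$-minor (both work); and the paper asserts $M/C\del D\cong N\in\{P_8,F_7^-,(F_7^-)^*\}$ as an immediate consequence, whereas you correctly flag that pinning down which tightenings of $P_8^-$, $F_7^{=}$, $(F_7^{=})^*$ can occur is a small finite verification -- a step the paper glosses over but that is indeed the content of its ``It follows that'' sentence.
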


\begin{proof}
Assume that $M'$ has a $\{P_8^{-},F_7^{=}, (F_7^{=})^{*}\}$-minor. Since $M'$ is obtained from $M$ by relaxing $X$, it follows from Theorem \ref{fragilitymain} and Lemma \ref{whirls} that $M'$ is $\{U_{2,5},U_{3,5}\}$-fragile. Each of the matroids in $\{P_8^{-},F_7^{=}, (F_7^{=})^{*}\}$ has a $\{U_{2,5},U_{3,5}\}$-minor, so if $C$ and $D$ are such that $M'/C\del D\cong N'$ for some $N'\in \{P_8^{-},F_7^{=}, (F_7^{=})^{*}\}$, then $C\subseteq X$ and $D\subseteq E(M')-X$ since the elements of $X$ are nondeletable and the elements of $E(M')-X$ are noncontractible by Theorem \ref{fragilitymain}. But then it follows from Lemma \ref{Hminors} that $N'$ can be obtained from $M/C\del D$ by relaxing the circuit-hyperplane $X-C$. It follows that $M/C\del D\cong N$ for some $N\in \{P_8,F_7^{-}, (F_7^{-})^{*}\}$, a contradiction because $M$ is $\gf(4)$-representable. 
\end{proof}

We can now describe the structure of the $\gf(4)$-representable matroids that are circuit-hyperplane relaxations of $\gf(4)$-representable matroids.

\begin{theorem}
\label{main}
Let $M$ and $M'$ be $\gf(4)$-representable matroids such that $M$ is connected, $M'$ is $3$-connected, and $M'$ is obtained from $M$ by relaxing a circuit-hyperplane. Then at least one of the following holds.
  \begin{enumerate}
  \item[(a)] $M'$ is a whirl;
  \item[(b)] $M'\in \{M_{9,9}, M_{9,9}^{*}\}$; 
 \item[(c)] $M'$ or $(M')^{*}$ can be obtained from $U_{2,5}$ (with groundset $\{a,b,c,d,e\}$) by gluing wheels to $(a,c,b)$,$(a,d,b)$; 
  \item[(d)] $M'$ or $(M')^{*}$ can be obtained from $U_{2,5}$ (with groundset $\{a,b,c,d,e\}$) by gluing wheels to $(a,b,c)$,$(c,d,e)$; 
  \item[(e)] $M'$ or $(M')^{*}$ can be obtained from $M_{7,1}$ by gluing a wheel to $(1,3,2)$;   
    \item[(f)] there is some path sequence that describes $M'$.
  \end{enumerate}
\end{theorem}

\begin{proof}
It follows from Theorem \ref{fragilitymain} that $M'$ is either $U_{2,4}$-fragile or $\{U_{2,5}, U_{3,5}\}$-fragile. If $M'$ is $U_{2,4}$-fragile, then it follows from Lemma \ref{whirls} that $M'$ is a whirl. We may therefore assume that $M'$ is $\{U_{2,5}, U_{3,5}\}$-fragile. It follows from Lemma \ref{nobadminor} that $M'$ has no $\{P_8^{-},F_7^{=}, (F_7^{=})^{*}\}$-minor. Then, by Theorem \ref{computer2} and Theorem \ref{fragilitymain}, one of $(b)$ through $(e)$ holds or else $M'$ has an $\{X_8,Y_8,Y_8^{*}\}$-minor. Note that outcome (iii) of Theorem \ref{computer2} corresponds to outcome (c) here, since a matroid or its dual that is obtained from $U_{2,5}$ (with groundset $\{a,b,c,d,e\}$) by gluing wheels onto all three of the triangles $(a,c,b)$,$(a,d,b)$,$(a,e,b)$ does not have a basis of nondeletable elements and a cobasis of noncontractible elements, and therefore cannot be obtained by relaxing a circuit-hyperplane. We can see this by the following counting argument. Observe that the rank of a matroid obtained from $U_{2,5}$ (with groundset $\{a,b,c,d,e\}$) by gluing wheels $A$, $B$ and $C$ onto the triangles $(a,c,b)$,$(a,d,b)$,$(a,e,b)$ is $r(A)+r(B)+r(C)-4$. But the nondeletable elements of this matroid are precisely the rim elements of the wheels of which there are $r(A)+r(B)+r(C)-3$. Hence any cobasis must contain a nondeletable element $e$. Since this matroid has $M_{9,18}$ as a minor (see Appendix \cite[Lemma 2]{COZarx}), $M$ has no essential elements, which implies that $e$ must be contractible.

Finally, if $M'$ has an $\{X_8,Y_8,Y_8^{*}\}$-minor, then it follows from Theorem \ref{mainresultgf4} that (f) holds.
\end{proof}

We can now show that if $M$ and $M'$ are $\gf(4)$-representable matroids such that $M'$ is obtained from $M$ by relaxing a circuit-hyperplane, then $M'$ has path width $3$.

\begin{proof}[Proof of Theorem 1.]
If $M$ is not connected, then it follows from Lemma \ref{disconnected} that $M'$ has path width $3$. We may therefore assume that $M$ is connected. Then, by Lemma \ref{nt2sep}, $M'$ can be obtained from a matroid in Theorem \ref{main} (a) - (f) by performing some, possibly empty, sequence of series or parallel extensions. The result now follows from the fact that all the matroids in Theorem \ref{main} (a) - (f) have path width $3$.
\end{proof}

\section{Forbidden submatrices}

In this section, we will prove our second characterization, Theorem \ref{badsubmatrix}. Let $M$ be a $\gf(4)$-representable matroid with a circuit-hyperplane $X$. Choose $e\in X$ and $f\in E-X$ such that $B=(X-e)\cup f$ is a basis of $M$. Then we can find a reduced $\gf(4)$-representation of $M$ in block form,
\[C=\kbordermatrix{
&(E-X)-f  & e \\
X-e&A & \underline{1}\\
f &\underline{1}^{T} & 0\\
}.
\] 

Here $A$ is an $(X-e)\times ((E-X)-f)$ matrix over $\gf(4)$, and we have scaled so that every non-zero entry in the row labelled by $f$ and the column labelled by $e$ is $1$. We denote by $A_{ij}$ the entry in row $i$ and column $j$ of $A$.

Let $M'$ be the matroid obtained from $M$ by relaxing the circuit-hyperplane $X$. If $M'$ is $\gf(4)$-representable, then we can find a reduced representation of $M'$ in block form,

\[C'=\kbordermatrix{
&(E-X)-f  & e \\
X-e&A' & \underline{1}\\
f &\underline{1}^{T} & \omega\\
}.
\] 

We have scaled the rows and columns of the matrix such that the entry in the row labelled by $f$ and column labelled by $e$ is $\omega\in \gf(4)-\{0,1\}$, and every remaining entry in row $e$ and column $f$ is a $1$.

We omit the straightforward proof of the following lemma.

\begin{lemma}
$A_{ij}=0$ if and only if $A_{ij}'=0$.
\end{lemma}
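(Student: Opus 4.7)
The plan is to exploit the near-equality of the basis families $\mathcal{B}(M)$ and $\mathcal{B}(M')$: by definition of circuit-hyperplane relaxation, $\mathcal{B}(M')=\mathcal{B}(M)\cup\{X\}$. Since $B=(X-e)\cup f$ is a common basis, every entry of $A$ (respectively $A'$) records, via the standard reduced-representation correspondence, whether a particular single-element exchange with $B$ produces a basis of $M$ (respectively $M'$). So the task reduces to checking that the exchange in question never produces exactly the set $X$.

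Concretely, I would fix $i\in X-e$ and $j\in (E-X)-f$, and consider the set
\[
B_{ij}\;=\;(B-i)\cup j\;=\;(X-\{e,i\})\cup\{f,j\}.
\]
Because $C$ and $C'$ both arise from $M=M[I\,|\,C]$ and $M'=M[I\,|\,C']$ with the same distinguished basis $B$ and identity block, the standard fact that a single swap $(B-i)\cup j$ is a basis precisely when the corresponding entry of the reduced matrix is nonzero gives: $B_{ij}\in\mathcal{B}(M)\iff A_{ij}\neq 0$, and $B_{ij}\in\mathcal{B}(M')\iff A'_{ij}\neq 0$.

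The final step is the observation that $f\in B_{ij}$ but $f\notin X$, so $B_{ij}\neq X$. Consequently $B_{ij}\in\mathcal{B}(M)$ if and only if $B_{ij}\in\mathcal{B}(M')$, and the two equivalences above force $A_{ij}=0\iff A'_{ij}=0$. There is no real obstacle here; the only thing to be careful about is matching the row/column indexing of $C$ and $C'$ so that the single-exchange interpretation applies to the same set $B_{ij}$ in both matroids, which is guaranteed by the assumption that both representations use the common distinguished basis $B=(X-e)\cup f$.
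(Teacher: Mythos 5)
Your proof is correct and is precisely the straightforward argument the paper elects to omit: the entry $A_{ij}$ (resp.\ $A'_{ij}$) is nonzero exactly when $(B-i)\cup j$ is a basis of $M$ (resp.\ $M'$), and since $f\in (B-i)\cup j$ while $f\notin X$, the set $(B-i)\cup j$ is never the relaxed circuit-hyperplane $X$, so $\mathcal{B}(M')=\mathcal{B}(M)\cup\{X\}$ makes the two conditions equivalent.
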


Next we show that the only non-zero entries of $A'$ are $1$ and $\omega$.

\begin{lemma}
\label{twononzero}
$A_{ij}'\neq \omega+1$.
\end{lemma}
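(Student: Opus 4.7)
The plan is to exhibit a basis of $M'$ that forces a specific $2\times 2$ submatrix of $C'$ to be non-singular, and then read off the constraint on $A'_{ij}$ from that non-singularity.

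First I would observe that for any $i \in X-e$ and $j \in (E-X)-f$, the set $(X-i)\cup j$ is a basis of $M$. One can see this from the matrix $C$ by looking at the $2\times 2$ submatrix on rows $\{i,f\}$ and columns $\{j,e\}$, which is
\[
\begin{bmatrix} A_{ij} & 1 \\ 1 & 0 \end{bmatrix},
\]
with determinant $-1 = 1 \neq 0$ in $\gf(4)$. Under the standard basis-exchange interpretation of reduced representations with basis $B=(X-e)\cup f$, this says precisely that $(B-\{i,f\})\cup\{j,e\} = (X-i)\cup j$ is a basis of $M$.

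Since $\mathcal{B}(M')=\mathcal{B}(M)\cup\{X\}$, the set $(X-i)\cup j$ is also a basis of $M'$. Applying the same interpretation to $C'$, the corresponding $2\times 2$ submatrix
\[
C'[\{i,f\},\{j,e\}] = \begin{bmatrix} A'_{ij} & 1 \\ 1 & \omega \end{bmatrix}
\]
must therefore be non-singular. Its determinant is $\omega A'_{ij} - 1$, so we need $\omega A'_{ij} \neq 1$, i.e.\ $A'_{ij} \neq \omega^{-1}$. In $\gf(4)$ we have $\omega^{3}=1$, so $\omega^{-1}=\omega^{2}=\omega+1$, giving $A'_{ij}\neq \omega+1$ as required.

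There is no real obstacle here: the entire argument is a one-line determinant computation, with the only subtlety being to recognize that the $2\times 2$ submatrix of $C$ is automatically non-singular (independently of the value $A_{ij}$) because the $(f,e)$ entry of $C$ was scaled to $0$. Combined with the previous lemma (which says the zero pattern of $A'$ matches that of $A$), this lemma shows that every entry of $A'$ lies in $\{0,1,\omega\}$, setting up the subsequent case analysis of forbidden submatrices.
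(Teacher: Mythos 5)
Your proof is correct and uses the same essential idea as the paper: compare the determinants of the corresponding $2\times2$ submatrices of $C$ and $C'$ on rows $\{i,f\}$ and columns $\{j,e\}$, exploiting that the $(f,e)$ entry is $0$ in $C$ and $\omega$ in $C'$. The paper phrases it as a proof by contradiction assuming $A'_{ij}=\omega+1$, whereas you argue directly; this is only a stylistic difference.
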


\begin{proof}
Suppose $A_{ij}'=\omega+1$. Then $C'$ has a submatrix
\[C'[\{i,f\}, \{e,j\}]=\kbordermatrix{
& j  & e \\
i &\omega+1&1\\
f &1&\omega\\
},
\] which has determinant zero. Therefore $B\triangle \{e,f,i,j\}$ is not a basis of the matroid $M[I | C']$. But the corresponding submatrix of $C$ is

\[C[\{i,f\}, \{e,j\}]=\kbordermatrix{
& j  & e \\
i &x&1\\
f &1&0\\
},
\] for some non-zero $x$. Since $C[\{i,f\}, \{e,j\}]$ has non-zero determinant, $B\triangle \{e,f,i,j\}$ is a basis of $M$, and hence of $M'$. Therefore $M'\neq M[I | C']$.
\end{proof}

\begin{lemma}
\label{samerow}
$A_{ij}=A_{ik}$ if and only if $A_{ij}'=A_{ik}'$. Similarly, $A_{ij}=A_{kj}$ if and only if $A_{ij}'=A_{kj}'$
\end{lemma}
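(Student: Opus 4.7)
The plan is to exploit the simple relationship between the bases of $M$ and $M'$: the bases of $M'$ are exactly the bases of $M$ together with the one extra set $X$. Thus, for any set $Y$ with $Y \ne X$, we have $Y$ is a basis of $M$ if and only if $Y$ is a basis of $M'$. Since $C$ and $C'$ are reduced representations with respect to the same basis $B = (X-e)\cup f$, a submatrix $C[R,S]$ (with $R\subseteq B$, $S\subseteq E-B$) has nonzero determinant if and only if $(B-R)\cup S$ is a basis of $M$, and similarly for $C'$ and $M'$. So whenever $(B-R)\cup S \ne X$ we get $\det C[R,S] \ne 0 \iff \det C'[R,S] \ne 0$.

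For the first statement, consider the $2\times 2$ submatrix on rows $\{i,f\}$ and columns $\{j,k\}$, namely
\[
C[\{i,f\},\{j,k\}]=\begin{pmatrix} A_{ij} & A_{ik}\\ 1 & 1\end{pmatrix},
\qquad
C'[\{i,f\},\{j,k\}]=\begin{pmatrix} A_{ij}' & A_{ik}'\\ 1 & 1\end{pmatrix}.
\]
Here $(B-\{i,f\})\cup\{j,k\} = (X-\{e,i\})\cup\{j,k\}$; since $j,k \in E-X$ and $e,i \in X$, this set differs from $X$ and so is a basis of $M$ iff of $M'$. Working in $\gf(4)$ (where $-1 = 1$), the determinants are $A_{ij}+A_{ik}$ and $A_{ij}'+A_{ik}'$, which vanish precisely when $A_{ij}=A_{ik}$ and $A_{ij}'=A_{ik}'$, respectively. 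This gives the first equivalence.

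For the second statement, consider instead rows $\{i,k\}$ and columns $\{j,e\}$:
\[
C[\{i,k\},\{j,e\}]=\begin{pmatrix} A_{ij} & 1\\ A_{kj} & 1\end{pmatrix},
\qquad
C'[\{i,k\},\{j,e\}]=\begin{pmatrix} A_{ij}' & 1\\ A_{kj}' & 1\end{pmatrix}.
\]
Now $(B-\{i,k\})\cup\{j,e\} = (X-\{i,k\})\cup\{f,j\}$; since $f,j \in E-X$ while $i,k \in X$, this set again cannot equal $X$. As before, the equality of determinants to zero coincides in $C$ and $C'$, giving $A_{ij}=A_{kj} \iff A_{ij}'=A_{kj}'$.

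There is essentially no obstacle here; the only thing to check carefully is that the relevant $2\times 2$ submatrices correspond to subsets different from $X$, so that the single basis swap caused by relaxation does not interfere. The lemma is really just the statement that $2\times 2$ singularity patterns within rows or within columns of $A$ and $A'$ must agree, and this follows directly from the basis correspondence above.
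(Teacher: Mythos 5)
Your proof is correct and takes essentially the same approach as the paper: examining the $2\times 2$ submatrices $C[\{i,f\},\{j,k\}]$ and $C[\{i,k\},\{j,e\}]$ and using the basis correspondence between $M$ and $M'$. The paper only spells out the forward direction of the first statement and cites "similar easy arguments" for the rest, whereas you carry out all directions uniformly by noting the determinant criterion is already an equivalence; you also make explicit the check that the relevant basis candidate is never $X$, which the paper leaves implicit.
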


\begin{proof}
We show that $A_{ij}=A_{ik}$ implies that $A_{ij}'=A_{ik}'$. The proof of the converse, and the proof of the second statement proceed by similar easy arguments. Suppose that $A_{ij}=A_{ik}$. Then $C$ has a submatrix \[C[\{i,f\}, \{j,k\}]=\kbordermatrix{
& j  & k \\
i &x&x\\
f &1&1\\
},
\] for some non-zero $x$. Since $C[\{i,f\}, \{j,k\}]$ has zero determinant, $B\triangle \{f,i,j,k\}$ is not a basis of $M$, and hence not a basis of $M'=M[I | C']$. Therefore $\det(C'[\{i,f\}, \{j,k\}])=0$, so it follows that $A_{ij}'=A_{ik}'$. 
\end{proof}

The following lemma on diagonal submatrices will be used frequently.

\begin{lemma}
\label{diagonal}
Let \[ \begin{bmatrix}
x&0\\
0&y
\end{bmatrix} \text{ and }
\begin{bmatrix}
a&0\\
0&b
\end{bmatrix}\] be corresponding submatrices of $A$ and $A'$ respectively, where $x,y,a,b$ are non-zero entries. Then $x=y$ if and only if $a\neq b$.
\end{lemma}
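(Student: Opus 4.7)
The plan is to compare corresponding $3 \times 3$ determinants in $C$ and $C'$ obtained by bordering the given $2 \times 2$ diagonal submatrices with the row labelled $f$ and the column labelled $e$. Let the diagonal submatrices sit in rows $i,k$ of $X-e$ and columns $j,l$ of $(E-X)-f$, so $A_{ij}=a$, $A_{kl}=b$, $A_{il}=A_{kj}=0$, and similarly in $A'$.

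First I would observe that the basis $B=(X-e)\cup f$ has the property that a set of the form $B \triangle (Y \cup Z)$ with $Y \subseteq B$, $Z \subseteq E \setminus B$ and $|Y|=|Z|$ is a basis of $M[I|C]$ if and only if $\det(C[Y,Z]) \neq 0$. Taking $Y=\{i,k,f\}$ and $Z=\{j,l,e\}$, the resulting set is $X \triangle \{i,j,k,l\}$, which is different from $X$ (since $\{i,j,k,l\} \neq \emptyset$); hence it is a basis of $M$ if and only if it is a basis of $M'$, since the only basis that changes under relaxation of the circuit-hyperplane is $X$ itself. Therefore $\det(C[Y,Z])=0$ if and only if $\det(C'[Y,Z])=0$.

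Next I would compute both determinants in $\gf(4)$ (characteristic $2$):
\[
\det\begin{pmatrix} a & 0 & 1\\ 0 & b & 1\\ 1 & 1 & 0\end{pmatrix} = a+b,
\qquad
\det\begin{pmatrix} x & 0 & 1\\ 0 & y & 1\\ 1 & 1 & \omega\end{pmatrix} = xy\omega + x + y.
\]
So $a=b$ if and only if $xy\omega+x+y=0$. By Lemma \ref{twononzero}, the nonzero entries $x,y$ lie in $\{1,\omega\}$, giving four cases. Using $\omega^2=\omega+1$ and $\omega^3=1$, one checks: if $x=y=1$ then $xy\omega+x+y=\omega \neq 0$; if $x=y=\omega$ then $xy\omega+x+y = 1 \neq 0$; and if $\{x,y\}=\{1,\omega\}$ then $xy\omega+x+y = \omega^2 + 1 + \omega = 0$. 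Hence $xy\omega+x+y=0$ precisely when $x \neq y$, which is equivalent to $a=b$ as required.

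The only substantive step is the matching of basis status across $M$ and $M'$, which is immediate from the observation that $X$ itself is not among the sets involved; the remainder is a direct $3\times 3$ determinant calculation and a four-case check over $\gf(4)$. No separate obstacle arises.
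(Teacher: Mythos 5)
Your proof is correct and follows essentially the same approach as the paper: border the diagonal submatrices with the $e$ column and $f$ row, compute the two $3\times3$ determinants $a+b$ and $xy\omega+x+y$, and use the fact that the nonzero entries of $A'$ lie in $\{1,\omega\}$ to reduce to a case check. The one step the paper leaves implicit --- that $\det(C[Y,Z])=0$ iff $\det(C'[Y,Z])=0$ because the set $B\triangle(Y\cup Z)=X\triangle\{i,j,k,l\}$ can never equal $X$ --- you have spelled out explicitly, which is a clean and correct justification.
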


\begin{proof}
Adjoining $e$ and $f$ to the specified $2\times 2$ submatrices, we get the $3\times 3$ submatrices 
\[\begin{bmatrix}
x&0&1\\
0&y&1\\
1&1&0
\end{bmatrix} \text{ and } \begin{bmatrix}
a&0&1\\
0&b&1\\
1&1&\omega
\end{bmatrix}.\] These matrices have determinants $x+y$ and $ab\omega+a+b$. Thus if $x=y$, then $a\neq b$. Conversely, if $a\neq b$, then $\{a,b\}=\{1,\omega\}$ by Lemma \ref{twononzero} so $ab\omega+a+b=\omega^2+\omega+1=0$. Hence $x=y$.
\end{proof}

We can now identify all of the forbidden submatrices. We use Lemma \ref{samerow} to identify the first such matrix in the following lemma.

\begin{lemma}
\label{bad1by3}
Neither $A$ nor $A^{T}$ has a submatrix of the form \[\begin{bmatrix}
x&y&z
\end{bmatrix},\] where $x,y,z$ are distinct non-zero entries.
\end{lemma}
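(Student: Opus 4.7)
The plan is to derive this immediately from Lemma \ref{twononzero} and Lemma \ref{samerow}. Suppose, for a contradiction, that $A$ has a submatrix of the form $[x\;\, y\;\, z]$ with $x,y,z$ pairwise distinct and nonzero; the argument for $A^{T}$ is identical after swapping the roles of rows and columns, since the second half of Lemma \ref{samerow} gives the analogous column-equality statement.

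Let $i$ be the row and $j,k,\ell$ the three columns realizing the submatrix, so $A_{ij}=x$, $A_{ik}=y$, $A_{i\ell}=z$ are three pairwise distinct nonzero elements of $\gf(4)$. By Lemma \ref{samerow}, since $A_{ij}\neq A_{ik}$, $A_{ij}\neq A_{i\ell}$, and $A_{ik}\neq A_{i\ell}$, the corresponding entries $A'_{ij},A'_{ik},A'_{i\ell}$ of $A'$ are also pairwise distinct; and they are all nonzero by the unnamed lemma preceding Lemma \ref{twononzero} stating that $A_{ij}=0$ iff $A'_{ij}=0$. Thus $\{A'_{ij},A'_{ik},A'_{i\ell}\}$ is a set of three distinct nonzero elements of $\gf(4)$.

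However, Lemma \ref{twononzero} asserts that $A'_{ij}\neq \omega+1$ for every $i,j$, so every nonzero entry of $A'$ lies in $\{1,\omega\}$. Hence $A'$ cannot contain three pairwise distinct nonzero entries in a single row, producing the desired contradiction. Applying the same argument to $A^{T}$ (equivalently, to a column of $A$) via the second half of Lemma \ref{samerow} completes the proof.

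I do not expect any serious obstacle here: the lemma is a direct pigeonhole consequence of the fact that relaxation forces the nonzero entries of $A'$ to sit in a set of size two, combined with the bijective preservation of equality patterns in each row and column.
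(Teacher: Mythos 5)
Your proof is correct and follows essentially the same route as the paper: apply Lemma \ref{samerow} to conclude that the corresponding row of $A'$ also has three pairwise-distinct nonzero entries, then invoke Lemma \ref{twononzero} (entries of $A'$ lie in $\{0,1,\omega\}$) to contradict this; the paper's one-line proof compresses exactly this argument, and you correctly note the supporting role of the unnamed zero-preservation lemma, which the paper uses implicitly.
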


\begin{proof}
By Lemma \ref{samerow}, the corresponding submatrix of $A'$ must have the form \[\begin{bmatrix}
a&b&c
\end{bmatrix},\] where $a,b,c$ are distinct non-zero entries, which is a contradiction to Lemma \ref{twononzero}.
\end{proof}

We now use Lemma \ref{samerow} and Lemma \ref{diagonal} to find several more forbidden submatrices.

\begin{lemma}
\label{bad2by3}
$A$ has no submatrices of the following forms, where $x$, $y$, and $z$ are distinct non-zero entries.

 \[(i) \begin{bmatrix}
x&x&0\\
x&0&x
\end{bmatrix};\ (ii) \begin{bmatrix}
x&x&0\\
x&0&y
\end{bmatrix};\ (iii) \begin{bmatrix}
x&x&0\\
y&0&y
\end{bmatrix};\ (iv) \begin{bmatrix}
x&y&0\\
x&0&y
\end{bmatrix};\]

\[(v) \begin{bmatrix}
x&0&0\\
0&y&z
\end{bmatrix};\ (vi) \begin{bmatrix}
x&0&0\\
0&x&0\\
0&0&x
\end{bmatrix};\ (vii) \begin{bmatrix}
x&0&0\\
0&x&0\\
0&0&y
\end{bmatrix};\ (viii) \begin{bmatrix}
x&0&0\\
0&y&0\\
0&0&z
\end{bmatrix}.\]

\end{lemma}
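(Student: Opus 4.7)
The plan is to treat cases (i)--(vii) uniformly using Lemmas~\ref{samerow} and~\ref{diagonal} together with Lemma~\ref{twononzero}, and to dispatch case (viii) by a direct $4\times 4$ determinant comparison.

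For the first seven cases, the argument proceeds as follows. By Lemma~\ref{twononzero}, every nonzero entry of $A'$ lies in $\{1,\omega\}$. Given the forbidden pattern in $A$, Lemma~\ref{samerow} transports the equalities and inequalities among same-row and same-column nonzero entries of $A$ into the corresponding positions of $A'$. Since only two nonzero values are available, this essentially pins down the corresponding submatrix of $A'$ up to a binary choice of some $a \in \{1,\omega\}$. Lemma~\ref{diagonal} applied to one or two of the $2\times 2$ diagonal sub-submatrices then flips the equal/distinct status of the entries involved, producing an incompatibility. For instance, in case (i), Lemma~\ref{samerow} gives $A'_{11}=A'_{12}=A'_{21}=A'_{23}=a$ for some $a \in \{1,\omega\}$, while Lemma~\ref{diagonal} applied to the sub-submatrix at rows $\{1,2\}$ and columns $\{2,3\}$ demands $A'_{12}\neq A'_{23}$, a contradiction. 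Case (vi) is a pure Lemma~\ref{diagonal} argument: applying it to all three $2\times 2$ diagonal sub-submatrices of $xI_3$ forces $A'_{11}$, $A'_{22}$, $A'_{33}$ to be pairwise distinct, which is impossible in $\{1,\omega\}$. Cases (ii), (iii), (iv), (v), and (vii) follow the same pattern, each time extracting a single diagonal sub-submatrix whose predicted equality type clashes with that required by Lemma~\ref{diagonal}.

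For case (viii), with $A$-submatrix the diagonal matrix having entries $x,y,z$ on the diagonal and $\{x,y,z\}=\gf(4)^{*}$, Lemma~\ref{samerow} is vacuous and Lemma~\ref{diagonal} only forces $A'_{11}=A'_{22}=A'_{33}=a$ for some $a \in \{1,\omega\}$, which is internally consistent. The extra ingredient is the $4\times 4$ submatrix with row set $S=\{r_1,r_2,r_3,f\}$ and column set $T=\{c_1,c_2,c_3,e\}$, where $r_i$ and $c_i$ are the row and column labels of the $3\times 3$ diagonal block. A direct cofactor expansion over $\gf(4)$ gives
\[\det C[S,T] = xy + xz + yz,\]
which vanishes when $\{x,y,z\}=\gf(4)^{*}$ since $1+\omega+\omega^2=0$, whereas
\[\det C'[S,T] = a^2(1+\omega a),\]
which is nonzero for each $a \in \{1,\omega\}$. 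On the other hand, the associated basis swap $(B-S)\cup T = ((X-e)-\{r_1,r_2,r_3\}) \cup \{c_1,c_2,c_3,e\}$ is not equal to $X$, because $c_1,c_2,c_3 \in (E-X)-f$ lie outside $X$. Since $\mathcal{B}(M')=\mathcal{B}(M)\cup\{X\}$, any subset distinct from $X$ is a basis of $M$ if and only if it is a basis of $M'$, so $\det C[S,T]\neq 0$ iff $\det C'[S,T]\neq 0$, contradicting the computation above.

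The main obstacle is case (viii): the purely local Lemmas~\ref{samerow} and~\ref{diagonal} do not rule out $\mathrm{diag}(x,y,z)$, and one has to combine an explicit $\gf(4)$ determinant evaluation with the matroid-theoretic observation that this particular basis swap cannot coincide with $X$, which is what allows the two determinants to be compared in the first place.
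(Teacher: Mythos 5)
Your proposal follows the paper's argument essentially verbatim: cases (i)--(vii) are dispatched by combining Lemmas~\ref{twononzero}, \ref{samerow}, and \ref{diagonal} to force an inconsistency in the corresponding $A'$-submatrix, and case (viii) is handled by the same $4\times4$ determinant comparison after adjoining $e$ and $f$. One small imprecision: for cases (v) and (vii) a \emph{single} diagonal sub-submatrix does not suffice (two, respectively three, applications of Lemma~\ref{diagonal} are needed, as in the paper), although your general framework statement ``one or two'' already anticipates this; your explicit note in case (viii) that the basis swap $(B-S)\cup T$ cannot equal $X$ is a correct and welcome clarification of a step the paper leaves implicit.
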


\begin{proof}
Suppose $A$ has the submatrix (i). By applying Lemma \ref{samerow} to the rows and the first column, we deduce that the corresponding submatrix of $A'$ has the form  \[\begin{bmatrix}
a&a&0\\
a&0&a
\end{bmatrix},\] where $a$ is a non-zero entry, a contradiction of Lemma \ref{diagonal}.

Suppose $A$ has the submatrix (ii). By applying Lemma \ref{samerow} to the rows and the first column, and since $A'$ has at most two distinct non-zero entries by Lemma \ref{twononzero}, we deduce that the corresponding submatrix of $A'$ has the form  \[\begin{bmatrix}
a&a&0\\
a&0&b
\end{bmatrix},\] where $a$ and $b$ are the two non-zero entries of $A'$, a contradiction to Lemma \ref{diagonal}.

The proofs for (iii) and (iv) are similar to that for (ii). We omit the details.

Suppose $A$ has the submatrix (v). Then, by two applications of Lemma \ref{diagonal}, the corresponding submatrix of $A'$ must have the form \[\begin{bmatrix}
a&0&0\\
0&a&a
\end{bmatrix},\] for some non-zero entry $a$. This is a contradiction to Lemma \ref{samerow}.

Suppose $A$ has the submatrix (vi). By Lemma \ref{diagonal}, the corresponding submatrix of $A'$ must be a diagonal matrix with distinct non-zero entries, a contradiction to Lemma \ref{twononzero}.

Suppose $A$ has the submatrix (vii). Applying Lemma \ref{diagonal} to the two submatrices the form  \[\begin{bmatrix}
x&0\\
0&y
\end{bmatrix},\] it follows that the corresponding submatrix of $A'$ is
\[\begin{bmatrix}
a&0&0\\
0&a&0\\
0&0&a
\end{bmatrix},\] for some $a$, which is a contradiction to Lemma \ref{diagonal}. 

Suppose $A$ has the submatrix (viii). Then the corresponding submatrix of $A'$ is
\[\begin{bmatrix}
a&0&0\\
0&a&0\\
0&0&a
\end{bmatrix},\] for some $a$. Adjoining $e$ and $f$, we have a submatrix of $C$, \[\begin{bmatrix}
x&0&0&1\\
0&y&0&1\\
0&0&z&1\\
1&1&1&0
\end{bmatrix},\] which has zero determinant, while the corresponding submatrix of $C'$,\[\begin{bmatrix}
a&0&0&1\\
0&a&0&1\\
0&0&a&1\\
1&1&1&\omega
\end{bmatrix},\] has non-zero determinant, a contradiction.
\end{proof}

\begin{lemma}
    \label{freddy}
$A$ has no submatrices of the following forms, where $x$, $y$, and $z$ are distinct non-zero entries:

  \[(i) \begin{bmatrix}
x&y\\
0&x
\end{bmatrix};\ (ii)
\begin{bmatrix}
x&y\\
y&x
\end{bmatrix};\ (iii) \begin{bmatrix}
x&x\\
y&z
\end{bmatrix};\ (iv) \begin{bmatrix}
x&y\\
z&x
\end{bmatrix};\ (v) \begin{bmatrix}
x & y & 0 \\
x & 0 & z
\end{bmatrix}.\] 
\end{lemma}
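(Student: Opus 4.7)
The plan is to handle each of the four patterns separately, assuming in each case that $A$ contains the indicated submatrix and deriving a contradiction. In every case, I first determine the shape forced on the corresponding submatrix of $A'$: the unlabelled lemma stating that $A_{ij}=0$ if and only if $A'_{ij}=0$ preserves the zero pattern, Lemma \ref{samerow} transfers equalities (and inequalities) of entries lying in a common row or column, and Lemma \ref{twononzero} confines the nonzero entries of $A'$ to $\{1,\omega\}$. For pattern (iii), these constraints are already inconsistent: the row condition forces $A'_{11}=A'_{12}$, the two column conditions then force both $A'_{21}$ and $A'_{22}$ to equal the entry of $\{1,\omega\}$ opposite $A'_{11}$, but the row-$2$ condition demands $A'_{21}\neq A'_{22}$. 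So no valid $A'$ exists and (iii) is done immediately.

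For patterns (i), (ii), and (iv), the analysis forces the corresponding submatrix of $A'$ to be uniquely determined up to the choice of which of $1$ or $\omega$ plays the role of $a$: $\bigl[\begin{smallmatrix}a&b\\0&a\end{smallmatrix}\bigr]$ in case (i) and $\bigl[\begin{smallmatrix}a&b\\b&a\end{smallmatrix}\bigr]$ in cases (ii) and (iv), where $\{a,b\}=\{1,\omega\}$. I would then exhibit a submatrix of $C$ whose determinant disagrees with the corresponding determinant of $C'$ in being zero or nonzero. Since $B = (X-e)\cup\{f\}$ and the row and column indices chosen lie inside $X-e$ and $(E-X)-f$ respectively, the associated basis candidate $B\triangle Y$ never equals $X$, so relaxation of $X$ does not affect its basis status; hence the determinants must agree in vanishing, and any mismatch is the desired contradiction.

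For case (iv), the plain $2\times 2$ determinant already discriminates: a direct check shows $x^2+yz=0$ in $\gf(4)$ whenever $\{x,y,z\}$ are the three nonzero elements, while the determinant of $\bigl[\begin{smallmatrix}a&b\\b&a\end{smallmatrix}\bigr]$ is $a^2+b^2 = (a+b)^2 = \omega\neq 0$. For cases (i) and (ii) the $2\times 2$ determinants happen to coincide in being nonzero, so I would adjoin the row $f$ and column $e$ and compute the two $3\times 3$ determinants. Using the asymmetry between $C_{fe}=0$ and $C'_{fe}=\omega$, a routine expansion gives $\det C=y$ in case (i) and $\det C = 0$ in case (ii), while $\det C' = a^2\omega + b$ in case (i) and $\det C' = (a+b)^2\omega=\omega^2$ in case (ii). Both possible assignments $(a,b)\in\{(1,\omega),(\omega,1)\}$ then produce a mismatch, completing the argument.

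The main obstacle is simply recognising that for patterns (i) and (ii) the $2\times 2$ level does not suffice; the discriminating computation must involve the distinguished row $f$ and column $e$, where the entries $0$ and $\omega$ differ between $C$ and $C'$. Once the correct submatrix is chosen, all verifications reduce to a few arithmetic identities in $\gf(4)$.
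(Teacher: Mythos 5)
Your proposal is correct and follows essentially the same route as the paper: case (iii) by a direct row/column transfer argument via Lemma \ref{samerow} and Lemma \ref{twononzero}, case (iv) by comparing $2\times 2$ determinants, and cases (i) and (ii) by adjoining the row $f$ and column $e$ and comparing $3\times 3$ determinants. The explicit $\gf(4)$ computations you carry out (e.g.\ $\det C'=a^2\omega+b$ vanishing for both assignments of $\{a,b\}=\{1,\omega\}$ in case (i)) are accurate and simply make more concrete what the paper asserts.
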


\begin{proof}
Suppose $A$ has the submatrix (i). Then, adjoining $e$ and $f$, we see that $C$ has the following submatrix with non-zero determinant. \[\begin{bmatrix}
x&y&1\\
0&x&1\\
1&1&0
\end{bmatrix}.\] But then, by Lemma \ref{samerow}, the corresponding submatrix of $C'$ must have the following form.\[\begin{bmatrix}
a&b&1\\
0&a&1\\
1&1&\omega
\end{bmatrix},\] where $\{a,b\} = \{1,\omega\}$ by Lemma \ref{twononzero}. This gives a contradiction because this submatrix of $C'$ has zero determinant. A similar proof handles (ii).

Suppose $A$ has the submatrix (iii). Then, by Lemma \ref{samerow}, in the corresponding submatrix of $A'$, the entries in the first row are the same and the entries in the second row are different. But, by Lemma \ref{twononzero}, there are only two distinct non-zero entries in $A'$, so the entries are the same in one of the columns of $A'$, which is a contradiction to Lemma \ref{samerow}.

Suppose  $A$ has the submatrix (iv). Note that this submatrix has zero determinant. By Lemma \ref{samerow}, the corresponding submatrix of $A'$ must have the following form.  \[\begin{bmatrix}
a&b\\
b&a
\end{bmatrix},\] where $\{a,b\} = \{1,\omega\}$ by Lemma \ref{twononzero}. But this submatrix of $A'$ has non-zero determinant, a contradiction.

Suppose $A$ has the submatrix (v). Then $C$ contains the following submatrix, which does not use its last column:
\[\begin{bmatrix}
x & y & 0\\
x & 0 & z\\
1 & 1 & 1
\end{bmatrix}.\]
This matrix has determinant 0. By Lemmas \ref{twononzero}, \ref{samerow}, and \ref{diagonal}, the corresponding submatrix of $C'$ is
\[\begin{bmatrix}
a & b & 0\\
a & 0 & b\\
1 & 1 & 1
\end{bmatrix},\]
where $\{a,b\} = \{1,\omega\}$. This matrix has non-zero determinant, a contradiction.
\end{proof}

Finally, we find two more $3\times 3$ forbidden submatrices of $A$.

\begin{lemma}
\label{bad3by3}
$A$ has no submatrices of the following forms, where $x$, $y$, and $z$ are distinct non-zero entries:

\[(i) \begin{bmatrix}
x&y&x\\
y&y&0\\
x&0&0
\end{bmatrix};\ (ii) \begin{bmatrix}
x&y&x\\
y&y&0\\
x&0&z
\end{bmatrix}.\] 
\end{lemma}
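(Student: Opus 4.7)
My plan is to treat cases (i) and (ii) separately, following the template used for the earlier forbidden submatrices. In each case the first step is to apply Lemma~\ref{samerow} to every pair of rows and every pair of columns of the prescribed $3\times 3$ submatrix of $A$, combined with Lemma~\ref{twononzero}, which restricts the non-zero entries of $A'$ to $\{1,\omega\}$. This pins down the corresponding $3\times 3$ block of $A'$ completely, up to the involution swapping $1$ and $\omega$.

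For (i) the analysis forces the corresponding block of $A'$ to have the form $\begin{bmatrix} a & b & a \\ b & b & 0 \\ a & 0 & 0 \end{bmatrix}$ with $\{a,b\}=\{1,\omega\}$. I would then adjoin the row of $C$ (resp.\ $C'$) indexed by $f$ and the column indexed by $e$ and compare the two resulting $4\times 4$ determinants. A routine expansion in $\gf(4)$, hinging on the identity $s^2+st+t^2=0$ for distinct non-zero $s,t\in\gf(4)$, shows that this determinant vanishes in $C$ but is non-zero in $C'$. The corresponding $4$-subset of $E(M)$ consists of the three columns of $A$ indexing the submatrix together with $e$, which is distinct from the circuit-hyperplane $X$; hence the disagreement contradicts the fact that $M$ and $M[I\mid C']$ must agree on every basis other than $X$.

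For (ii) the analogous analysis forces the block of $A'$ into the form $\begin{bmatrix} a & b & a \\ b & b & 0 \\ a & 0 & b \end{bmatrix}$ with $\{a,b\}=\{1,\omega\}$. Here the appropriate witness is the $3\times 3$ determinant of the block itself, which (after adjoining the identity column of $[I\mid C]$ labelled by $f$) becomes the $4\times 4$ determinant indexing the cobasis $E-X$. Direct expansion gives $\det A = y^3 = 1$ in $\gf(4)$ (using $\{x,y,z\}=\gf(4)^{\ast}$, so $x+y=z$ and $x^2+z^2=y^2$), whereas in $A'$ the determinant factors as $b(a^2+ab+b^2)$ and vanishes by the same identity as in (i). Since $E-X\neq X$, this again contradicts the basis agreement between $M$ and $M[I\mid C']$.

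The main conceptual step in each case is recognising which $4$-subset distinct from $X$ witnesses the disagreement between $C$ and $C'$: for (i) one must adjoin both $e$ and $f$ and look at the full $4\times 4$ block (corresponding to a basis containing $e$ but not $f$), whereas for (ii) the $3\times 3$ block of $A$ itself already suffices (corresponding to the cobasis $E-X$). Once the correct witnessing subset is identified, the finite-field arithmetic in $\gf(4)$ is entirely routine.
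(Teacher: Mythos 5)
Your proof proposal is correct and follows essentially the same approach as the paper: for case (i) you adjoin $e$ and $f$ and compare the $4\times 4$ determinants of $C$ and $C'$ (zero versus $\omega a^2 b\neq 0$), while for case (ii) the $3\times 3$ block alone already disagrees (determinant $y^3=1$ in $A$ but $b(a^2+ab+b^2)=0$ in $A'$). Your identification of the witnessing basis as ``a $4$-subset consisting of the three columns together with $e$'' (and analogously as ``the cobasis $E-X$'' in case (ii)) is imprecise — the basis actually witnessed is $(B\smallsetminus S)\cup T$ where $S,T$ index the chosen rows and columns — but this looseness is harmless because the only property needed, and the one that holds, is that the basis differs from $X$.
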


\begin{proof}
Suppose that $A$ has the submatrix (i). Then, adjoining $e$ and $f$, we see that $C$ has the submatrix \[\begin{bmatrix}
x&y&x&1\\
y&y&0&1\\
x&0&0&1\\
1&1&1&0
\end{bmatrix},\] which has zero determinant. The corresponding submatrix of $C'$ is \[\begin{bmatrix}
a&b&a&1\\
b&b&0&1\\
a&0&0&1\\
1&1&1&\omega
\end{bmatrix},\] for distinct $a,b\in \{1,\omega\}$. This submatrix of $C$ has non-zero determinant, a contradiction.

Suppose that $A$ has the submatrix (ii). Note that the determinant of this submatrix is not zero. By Lemma \ref{twononzero} and Lemma \ref{samerow}, the corresponding submatrix of $A'$ is \[\begin{bmatrix}
a&b&a\\
b&b&0\\
a&0&b
\end{bmatrix},\] for distinct $a,b\in \{1,\omega\}$. This submatrix of $A'$ has zero determinant, which is a contradiction. 
\end{proof}

To prove the main theorem of this section, we need the following theorem \cite[Theorem 5.1]{geelen2000excluded}.

\begin{theorem}
\label{bound}
Minor-minimal non-$\gf(4)$-representable matroids have rank and corank at most 4.
\end{theorem}

We can now prove the main theorem, which we repeat for convenience.

\begin{theorem}
\label{badsubmatrixproof}
There is some matrix $C'$ representing $M'$ if and only if, up to permuting rows and columns, $A$ and $A^{T}$ have no submatrix in the following set, where $x,y,z$ are distinct non-zero elements of $\gf(4)$:
\[\begin{bmatrix}
x&y&z
\end{bmatrix},
\begin{bmatrix}
x&y\\
0&x
\end{bmatrix}, 
\begin{bmatrix}
x&y\\
y&x
\end{bmatrix},
\begin{bmatrix}
x&x\\
y&z
\end{bmatrix},
\begin{bmatrix}
x&y\\
z&x
\end{bmatrix},
\begin{bmatrix}
x&x&0\\
x&0&x
\end{bmatrix},
\begin{bmatrix}
x&x&0\\
x&0&y
\end{bmatrix},
\]

\[\begin{bmatrix}
x&x&0\\
y&0&y
\end{bmatrix},
\begin{bmatrix}
x&y&0\\
x&0&y
\end{bmatrix},
\begin{bmatrix}
x&0&0\\
0&y&z
\end{bmatrix},
\begin{bmatrix}
x&y&0\\
x&0&z	
\end{bmatrix},
\begin{bmatrix}
x&0&0\\
0&x&0\\
0&0&x
\end{bmatrix},
\begin{bmatrix}
x&0&0\\
0&x&0\\
0&0&y
\end{bmatrix},\]

\[
\begin{bmatrix}
x&0&0\\
0&y&0\\
0&0&z
\end{bmatrix},
\begin{bmatrix}
x&y&x\\
y&y&0\\
x&0&0
\end{bmatrix},
\begin{bmatrix}
x&y&x\\
y&y&0\\
x&0&z
\end{bmatrix}.\]
\end{theorem}

\begin{proof}
It follows from Lemmas \ref{bad1by3}, \ref{bad2by3}, \ref{freddy}, and \ref{bad3by3} that both $A$ and $A^T$ have no submatrix on the above list. 

Conversely, suppose that the $\gf(4)$-representable matroid $M$ is chosen to be minimal subject to the property that the relaxation $M'$ is not $\gf(4)$-representable. Then $M'$ has a minor $N$ isomorphic to one of the excluded minors for the class of $\gf(4)$-representable matroids. Assume that $N=M'/C\del D$ for some subsets $C$ and $D$. If there is an element $g$ in both $D$ and the circuit-hyperplane $X$ of $M$, then $M\del g=M'\del g$ by Lemma \ref{Hminors}, so $M$ also has an $N$-minor, contradicting the fact that $M$ is $\gf(4)$-representable. We deduce that $D\subseteq E(M)-X$, and dually, $C\subseteq X$. Now if $|D|\geq 2$, then there is some element $g$ in both $D$ and $E(M')-(X\cup f)$, so relaxing the circuit-hyperplane $X$ of $M\del g$ gives $M'\del g$ that is not $\gf(4)$-representable, which contradicts the minimality of $M$. Therefore $|D|\leq 1$, and by a dual argument, there is no element $g$ in both $C$ and $X-e$, so $|C|\leq 1$. Since we know, by Theorem \ref{bound}, that $|E(N)|\leq 8$, it now follows that $|E(M')|\leq 10$. The computations in the Appendix \cite{COZarx} show that $M'$ must have a submatrix from the above list. 
\end{proof}

\subsection*{Acknowledgements}
The authors thank the anonymous referee for some helpful suggestions that improved the exposition.

\appendix

\includepdf[pages={-}]{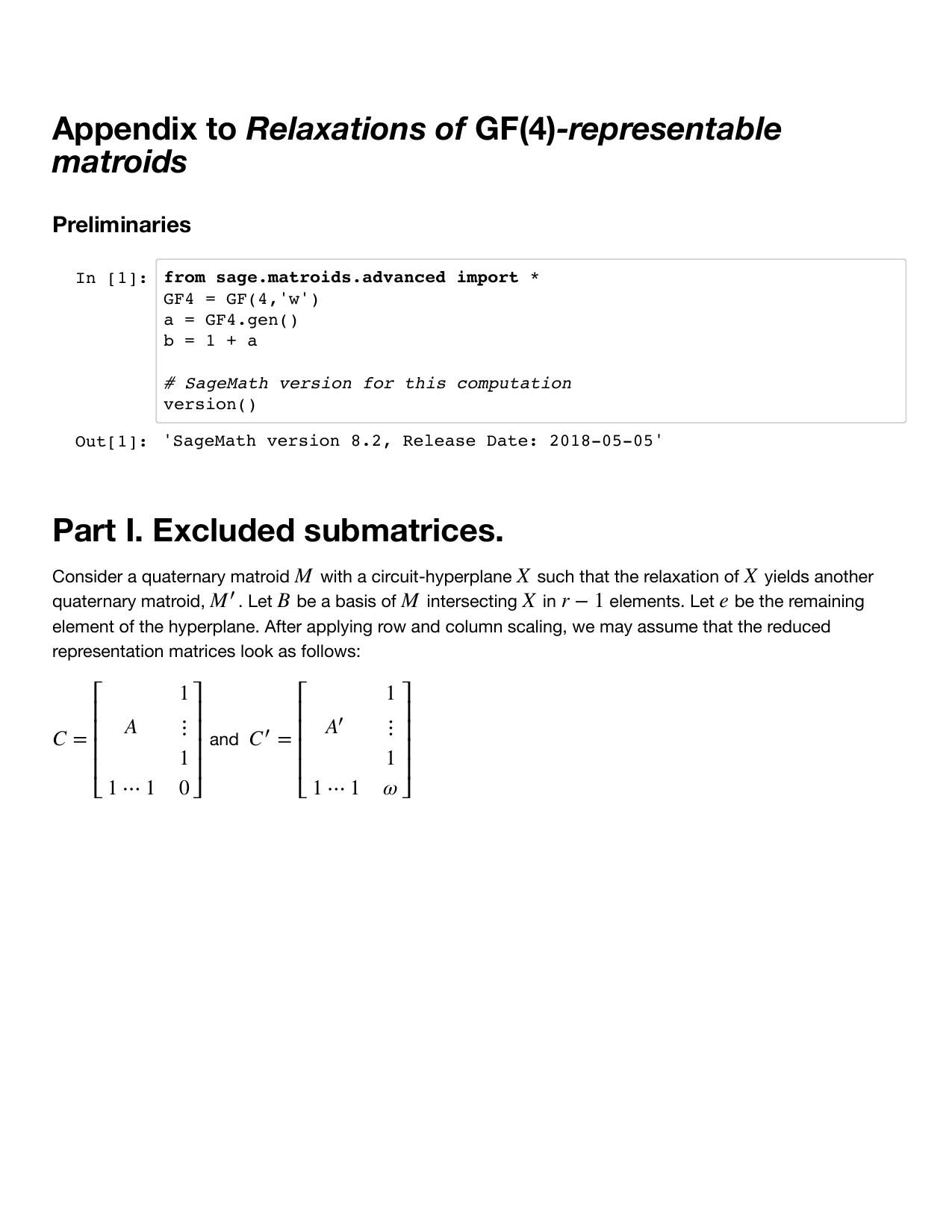}
\end{document}